\newcounter{descriptcount}
\newlist{enumdescript}{description}{1}
\setlist[enumdescript,1]{
  before={\setcounter{descriptcount}{0}
          \renewcommand*\thedescriptcount{\arabic{descriptcount}}},
        font={\bfseries\stepcounter{descriptcount}Case \thedescriptcount~}
}
\newtheorem{lemma}{Lemma}[section]
\theoremstyle{definition}
\newtheorem{definition}[lemma]{Definition}
\theoremstyle{plain}
\newtheorem{proposition}[lemma]{Proposition}
\numberwithin{equation}{section}
\title{Bifurcations of the H\'{e}non map with additive bounded noise}
\begin{document}

\author{Jeroen S.W. Lamb}
\address{Department of Mathematics, Imperial College London, 180 Queen's Gate, London SW7 2AZ, United Kingdom}
\address{International Research Center for Neurointelligence (IRCN), The University of Tokyo, 7-3-1 Hongo Bunkyo-ku, Tokyo, 
113-0033 Japan}
\address{Centre for Applied Mathematics and Bioinformatics, Department of Mathematics and Natural Sciences, Gulf University for Science and Technology, Halwally 32093, Kuwait}

\email{jsw.lamb@imperial.ac.uk}
\thanks{The first two authors were supported by the EPSRC grant EP/W009455/1 and EP/Y020669/1. The first author was also supported by the EPSRC grant EP/S023925/1. The third author was supported by the Project of Intelligent Mobility Society Design, Social Cooperation Program, UTokyo, JST Moonshot R \& D Grant Number JPMJMS2021, EPSRC grant EP/S515085/1}

\author{Martin Rasmussen}
\address{Department of Mathematics, Imperial College London, 180 Queen's Gate, London SW7 2AZ, United Kingdom}

\email{m.rasmussen@imperial.ac.uk}
\thanks{}

\author{Wei Hao Tey}
\address{Department of Mathematics, Imperial College London, 180 Queen's Gate, London SW7 2AZ, United Kingdom}
\address{International Research Center for Neurointelligence (IRCN), The University of Tokyo, 7-3-1 Hongo Bunkyo-ku, Tokyo, 
113-0033 Japan}
\email{w.tey18@imperial.ac.uk}

\keywords{Random dynamical systems, bifurcation, bounded noise, topological bifurcation.}
\subjclass[2020]{37C70, 37B25, 37H20}

\begin{abstract}
    We numerically study bifurcations of attractors of the H\'{e}non map with additive bounded noise with spherical reach. The bifurcations are analysed using a finite-dimensional boundary map. We distinguish between two types of bifurcations: topological bifurcations and boundary bifurcations. Topological bifurcations describe discontinuous changes of attractors and boundary bifurcations occur when singularities of an attractor's boundary are created or destroyed. We identify correspondences between topological and boundary bifurcations of attractors and local and global bifurcations of the boundary map.
\end{abstract}

\maketitle

\section{Introduction}\label{sec:intro}
Bifurcation theory is pivotal for understanding qualitative changes in dynamical systems. While bifurcation theory is well developed in the context of deterministic dynamical systems, a corresponding theory for random dynamical systems remains relatively unexplored. Existing results on bifurcations in random dynamical systems mainly concern stochastic differential equations with unbounded noise~\cite{arnold1995random}, even though bounded noise is often realistic from a modelling perspective, for instance in settings where physically relevant fluctuations are intrinsically bounded~\cite{d2013bounded}. 

The aim of this paper is to demonstrate the applicability of a new finite-dimensional map to study bifurcations of attractors of random dynamical systems with bounded noise \cite{kourliouros2023persistence}. We focus on a randomised H\'{e}non map as a prototypical example. Recall that the H\'{e}non map $f:\mathbb{R}^2 \to \mathbb{R}^2$ is given by $f(x,y) = (1 - ax^2 + y,bx)$, depending on parameters $a,b\in\mathbb R$ \cite{henon1976two}. We consider this map with additive bounded noise, represented by the random difference equation
\begin{equation}\label{eq:randomdiff}
    z_{i+1}= (x_{i+1},y_{i+1}) = f(x_i,y_i) + \xi_i \quad \mbox{for } i \in \mathbb N_0\,,
\end{equation}
where the noise, denoted by $\xi:=(\xi_i)_{i\in \mathbb N_0}$, consists of i.i.d.~random variables that are supported on the $\varepsilon$-ball ($\xi_i \in \overline{B_{\varepsilon}(0)} = \{z \in \mathbb R^2 : \|z\|\leq \varepsilon\}$).

For such models with unbounded (e.g.~normally distributed) additive noise, initial conditions have non-zero probability of reaching any open subset of $\mathbb R^2$ (even in one time-step). However, in contrast, if noise is bounded, trajectories may be attracted to a minimal attractor, from which escape is not possible. We say that a subset $A \subset \mathbb R^2$ is a minimal attractor of the random dynamical system \eqref{eq:randomdiff} if it is attracting and minimal forward invariant. $A$ is attracting if its domain of attraction 
\begin{equation}\label{eq:domain_random}
\mathcal{D}(A) := \{z_0\in \mathbb{R}^2\mid \lim_{i\to \infty}\inf_{a\in A}\|z_i-a\| = 0,~\text{for all } (\xi_i)_{i\in \mathbb{N}_0}\}
\end{equation}
contains a neighbourhood of $A$.
$A$ is called forward invariant if $z_i \in A$ implies $z_{i+1} \in A$ for all noise realisations $\xi_i\in\overline{B_\varepsilon(0)}$, and it is minimal if no proper subset of $A$ is also forward invariant. Note that this notion of attractor is different from that of so-called random attractors~\cite{arnold1995random}, which are time-varying objects depending on noise realisations. The attractors we discuss in this paper are deterministic and coincide with the union of fibers of a corresponding random attractor. Crucially, our minimal attractors are interesting from a statistical perspective, since they support ergodic stationary measures~\cite{zmarrou2007bifurcations}. 

Understanding bifurcations of minimal attractors is important beyond the study of random systems. Namely, equations of motion of the form \eqref{eq:randomdiff} also arise in control systems where $(\xi_i)_{i\in \mathbb{N}_0}$ represents a control input. In this context, a minimal attractor is both an invariant control set \cite{colonius2012dynamics} and a reachable set~\cite{aubin1984differential,Deimling+1992}. Moreover, bifurcations of minimal attractors are also of relevance to the assessment of \emph{resilience} of the deterministic system $f$. In particular, \cite{mcgehee1988some,meyer2022intensity} propose the maximal additive perturbation amplitude 
for which the dynamics remains in the domain of attraction of an attractor $A$, as a measure of resilience. This amplitude coincides with a bifurcation of minimal attractors. 

In this paper, we confine the discussion to discrete-time dynamics. The corresponding set-valued dynamics in the continuous time is represented by differential inclusions  \cite{aubin1984differential}; see  \cite{zmarrou2007bifurcations,homburg2010bifurcations,homburg2013bifurcations} for some results on bifurcations of minimal attractors in that setting. 

Qualitative changes of minimal attractors can be observed when varying parameters in the random system \eqref{eq:randomdiff}, specifically the parameters $a,b$ of the H\'{e}non map $f$ or the noise amplitude $\varepsilon$. We consider two types of bifurcations: \emph{topological bifurcations}, where the minimal attractor undergoes a discontinuous change, for example, transitioning from a connected set to two disjoint sets; and \emph{boundary bifurcations}, where singularities on the boundary of the minimal attractor are created or destroyed (see~Figure~\ref{fig:top_bif_intro} and Figure~\ref{fig:boun_bif_intro}). 

\begin{figure}[ht]
    \begin{subfigure}[t]{0.49\textwidth}
        \centering
        \includegraphics[width=1\textwidth,trim={0 0 0 20},clip]{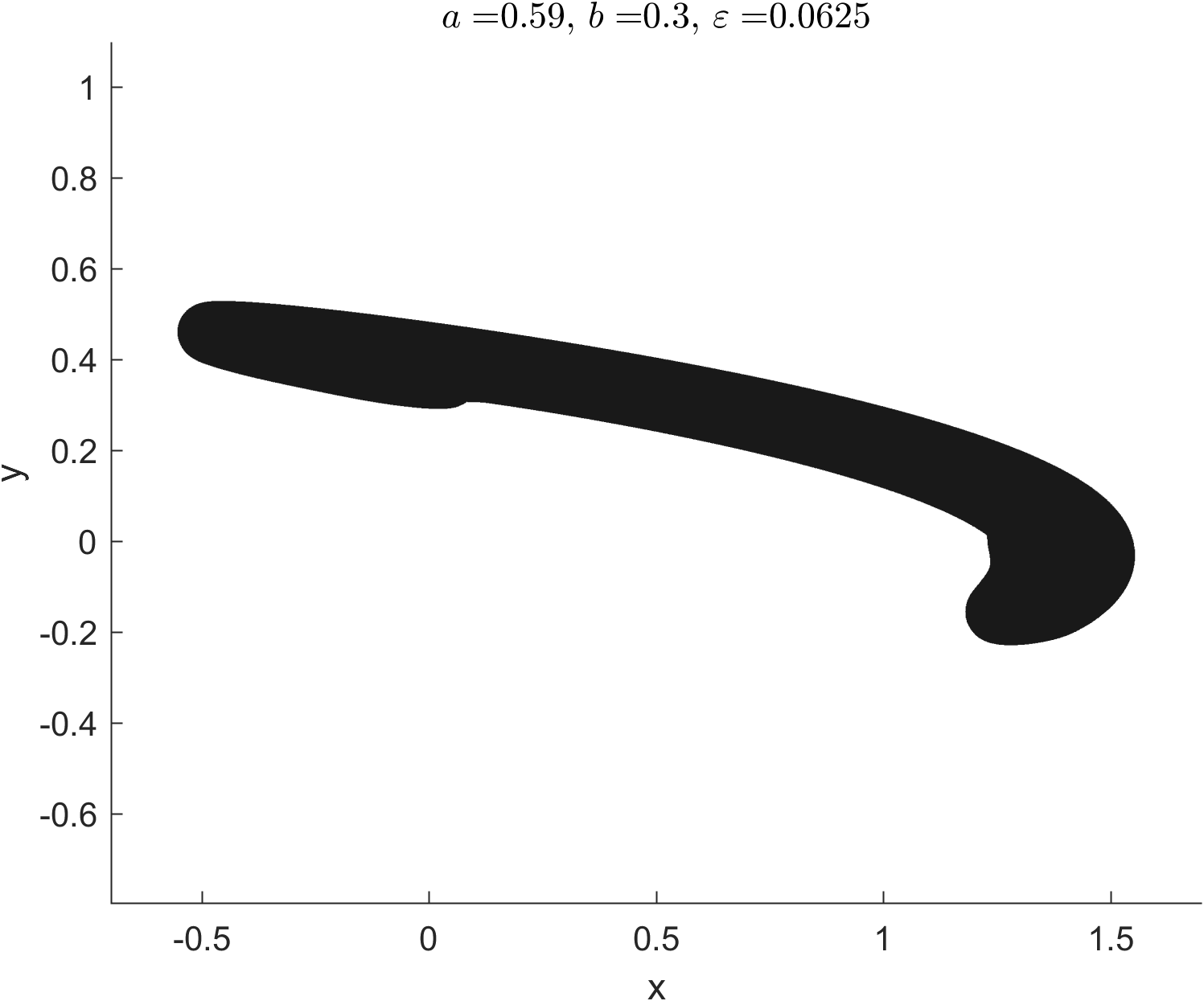}
        \captionsetup{width=\linewidth}\caption{$a = 0.59, b= 0.3, \varepsilon = 0.0625$.}
        \label{fig:a059_intro}
    \end{subfigure}
    \hfill
    \begin{subfigure}[t]{0.49\textwidth}
        \centering
        \includegraphics[width=1\textwidth,trim={0 0 0 20},clip]{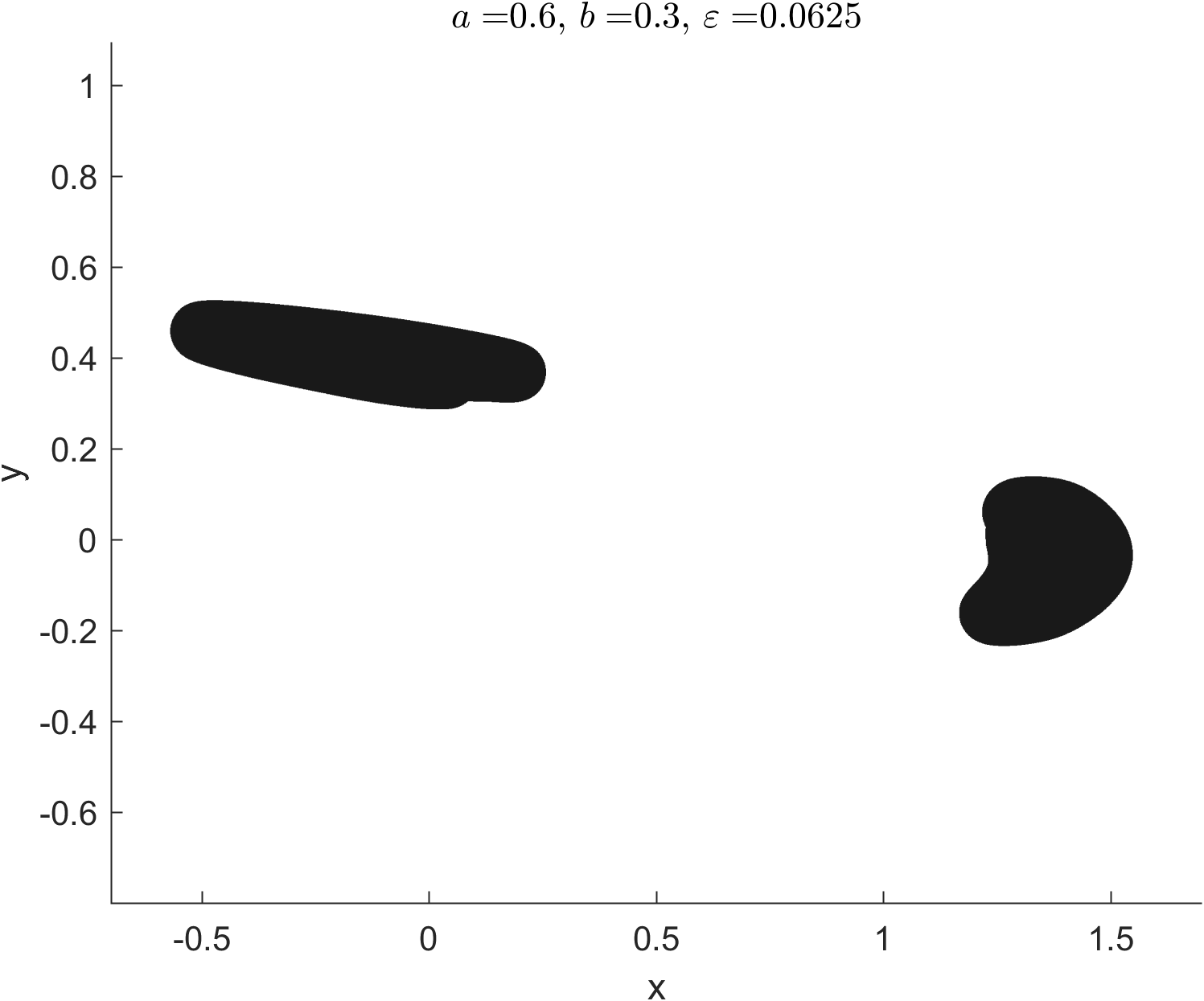}
        \captionsetup{width=\linewidth}\caption{$a = 0.6, b= 0.3, \varepsilon = 0.0625$.}
        \label{fig:a06_intro}
    \end{subfigure}
    \captionsetup{width=\linewidth}\caption{Numerical approximation of minimal attractors of the random H\'{e}non map \eqref{eq:randomdiff} with $b = 0.3$ and $\varepsilon = 0.0625$ and varying $a$ between $0.59$ and $0.6$, one observes a topological bifurcation from (a) one connected minimal attractor to (b) a minimal attractor that consists of two disjoint parts between which orbits alternate.}
    \label{fig:top_bif_intro}
\end{figure}

\begin{figure}[ht]
    \begin{subfigure}[t]{0.32\textwidth}
        \centering
        \includegraphics[width=1\textwidth,trim={0 0 0 0},clip]{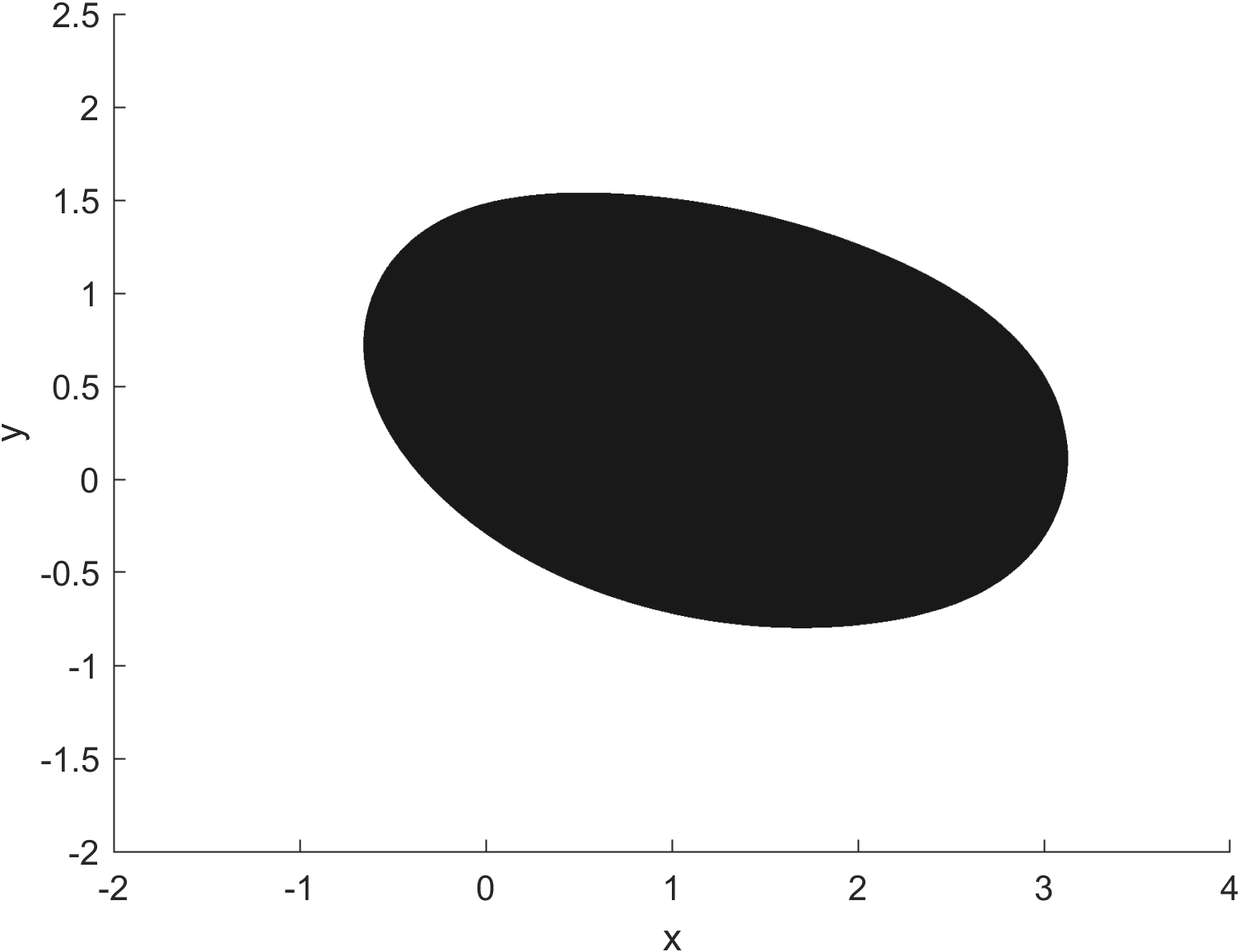}
        \captionsetup{width=\linewidth}\caption{$a = 0.06, b= 0.3, \varepsilon = 0.6$.}
        \label{fig:a006_intro}
    \end{subfigure}
    \hfill
    \begin{subfigure}[t]{0.32\textwidth}
        \centering
        \includegraphics[width=1\textwidth,trim={0 0 0 0},clip]{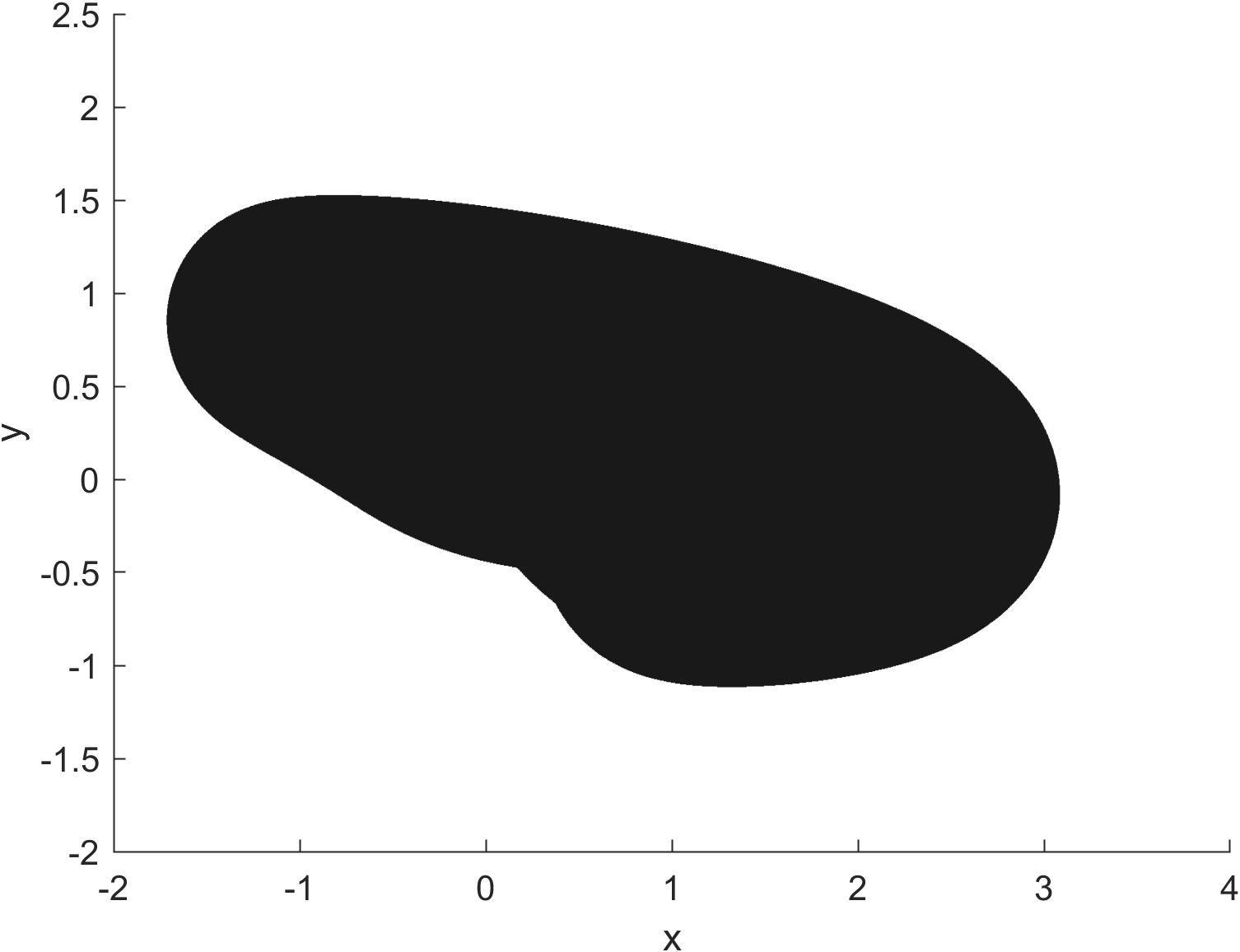}
        \captionsetup{width=\linewidth}\caption{$a = 0.18, b= 0.3, \varepsilon = 0.6$.}
        \label{fig:a018_intro}
    \end{subfigure}
    \hfill
    \begin{subfigure}[t]{0.32\textwidth}
        \centering
        \includegraphics[width=1\textwidth,trim={0 0 0 10},clip]{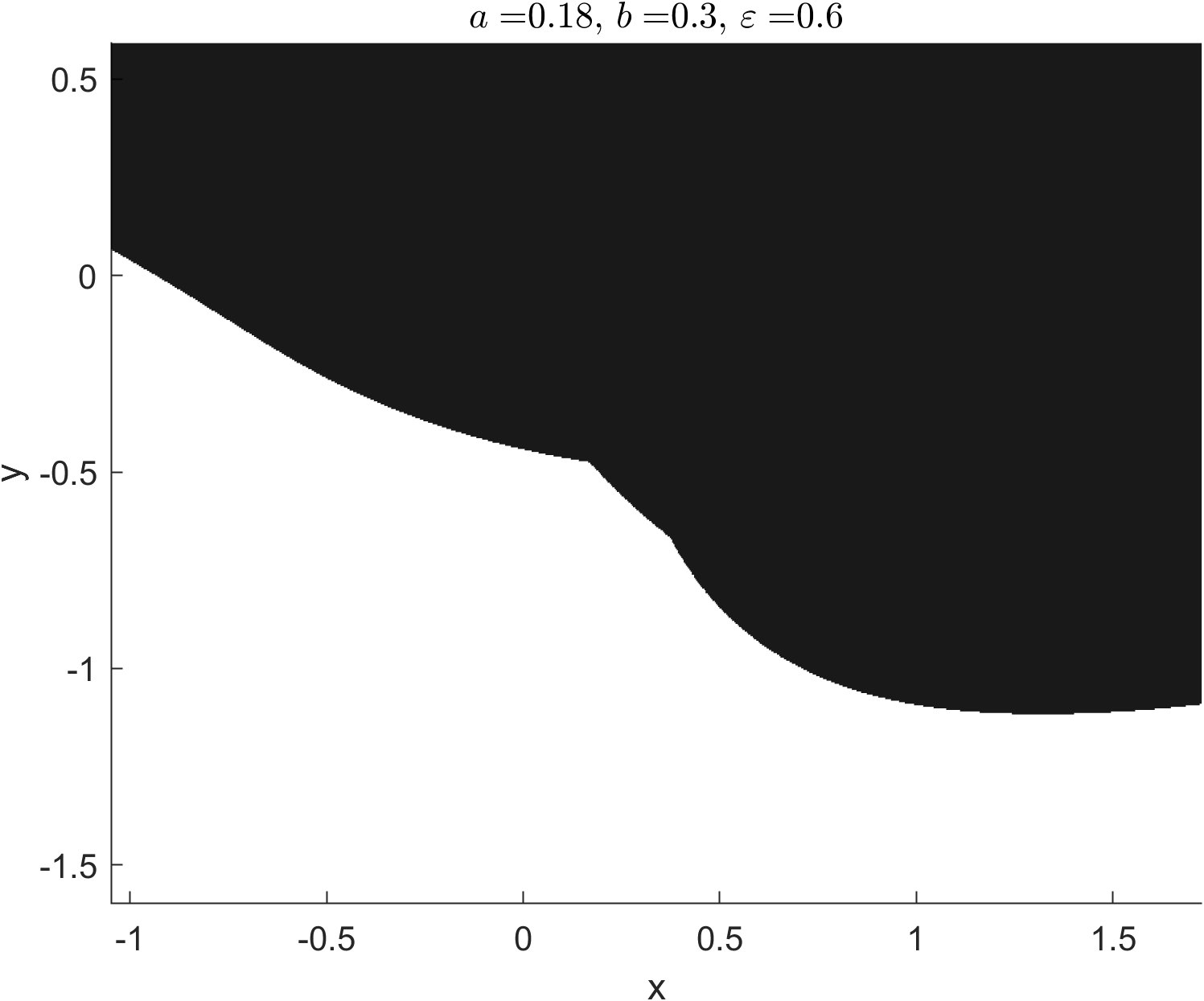}
        \captionsetup{width=\linewidth}\caption{Magnification of part of (b).}
        \label{fig:a018_intro_zoom}
    \end{subfigure}
    \captionsetup{width=\linewidth}\caption{Numerical approximation of the minimal attractor of the random H\'{e}non map \eqref{eq:randomdiff} with $b = 0.06$ and $\varepsilon = 0.6$. A boundary bifurcation is observed between $a = 0.06$ and $a = 0.18$, where (a) the boundary of the minimal attractor is smooth at $a=0.06$, and (b) singularities have appeared at $a=0.18$, see also the magnification in (c).}
    \label{fig:boun_bif_intro}
\end{figure}

\begin{figure}[h]
    \centering
    \includegraphics[width=.9\textwidth]{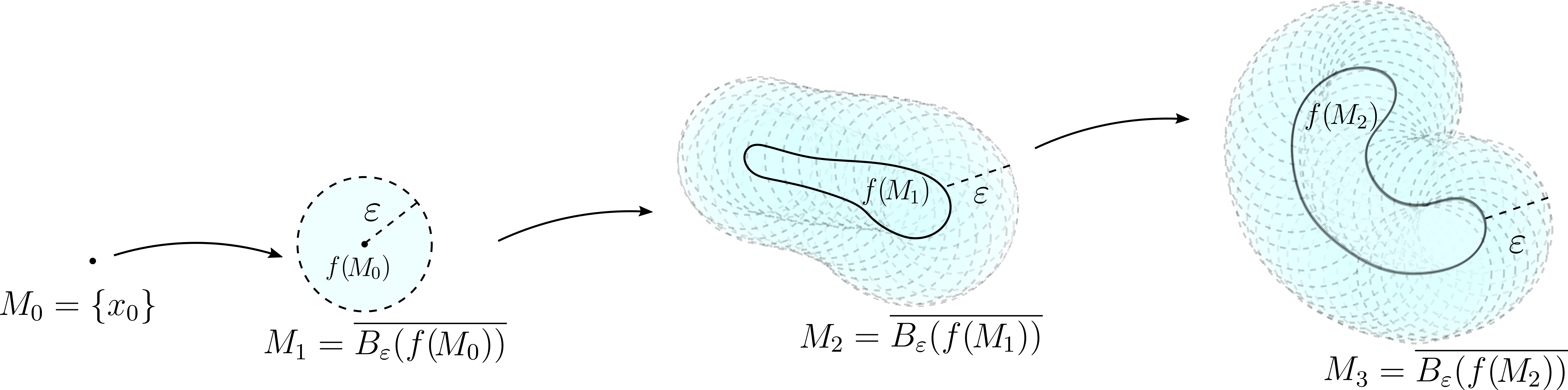}
    \captionsetup{width=\linewidth}\caption{Schematic illustration of iterations of the set-valued map $F(x) = \overline{B_{\varepsilon}(f(x))}$.}
    \label{fig:set-valued_illustrate}
\end{figure}

The compound behaviour of trajectories of the random system \eqref{eq:randomdiff}, considering all possible noise realisations, can be analysed through the dynamics of the set-valued map defined by $F(x) := \overline{B_{\varepsilon}(f(x))}$, with the natural extension to sets $S\subset \mathbb{R}^2$ as $F(S):=\bigcup_{x\in S} F(x)$. 
A schematic illustration of this set-valued dynamics is provided in Figure~\ref{fig:set-valued_illustrate}. 
Indeed, minimal attractors of the random system are (minimal) attractors of the set-valued map $F$. This observation has been used to produce Figures~\ref{fig:top_bif_intro} and \ref{fig:boun_bif_intro}, observing the convergence of iterations of initial sets by the set-valued map using set-oriented numerical methods of the software package GAIO \cite{dellnitz2001algorithms}. 
Unfortunately, however, the approximation of attractors by set-oriented methods is computationally expensive. Moroever, the set-valued point of view does not appear to aid much in the analysis of bifurcations, since set-valued dynamics evolves in the space of compact subsets of $\mathbb R^2$ which is not a Banach space. This prevents the use of standard bifurcation theory tools in this setting~\cite{colonius2012dynamics}. 

Circumventing the set-valued point of view, an alternative approach to the analysis of boundaries of attractors has been introduced in \cite{kourliouros2023persistence} via a new finite-dimensional \emph{boundary} map. 
The main aim of this paper is to provide a proof of concept on how the boundary map can be used to study bifurcations of minimal attractors of random diffeomorphisms with bounded additive noise.

Our numerical explorations reveal correspondences between topological bifurcations of random attractors with fold and heteroclinic bifurcations of the boundary map. We also identify two types of boundary bifurcations: the appearance of an isolated wedge singularity, associated with a change of stability type of a periodic orbit of the boundary map, and the emergence of an infinite cascade of wedge singularities converging towards a shallow singularity, associated with a non-transversal intersection between the unstable manifold of a saddle fixed point and the strong stable foliations of a stable periodic point of the boundary map. 

The remainder of this paper is organised as follows.
In Section~\ref{sec:setval}, we review some relevant concepts and results from set-valued dynamics. Then, in Section~\ref{sec:bm} we introduce the boundary map and discuss some of its elementary properties. The numerical explorations of the boundary map for the H\'enon map with bounded additive noise are presented in Section~\ref{sec:numerical}. We end this paper with an outlook towards future research in Section~\ref{sec:outlook}.

\section{The set-valued map and its bifurcations}\label{sec:setval}
 
Recall from the introduction above that the compound behaviour of trajectories of the random system (\ref{eq:randomdiff}) is encapsulated by a set-valued map $F:\mathcal{K}(\mathbb R^2) \to \mathcal{K}(\mathbb R^2)$, with $\mathcal{K}(\mathbb R^2)$ denoting the set of all non-empty compact subsets of $\mathbb R^2$ endowed with the (natural) Hausdorff metric $ $, defined by 
\begin{equation}\label{eq:setvaluedmap}
    F(X) := \bigcup_{x\in X} F(x)
    ,~~F(x):=\overline{B_\varepsilon(f(x))} := 
    \left\{f(x)+y \mid \|y\|\leq \varepsilon\right\}.
\end{equation}
If the probability distribution of the noise 
$(\xi_i)_{i\in \mathbb{N}_0}$ 
has a non-vanishing Lebesgue density on the $\varepsilon$-ball $\overline{B_{\varepsilon}(0)}$,
the support of a stationary measure of \eqref{eq:randomdiff} coincides with a minimal invariant set of the set-valued system $F$~\cite{zmarrou2007bifurcations,araujo2000attractors}.

As usual, we call a set $A\in \mathcal{K}(\mathbb{R}^2)$ an \emph{attractor} of $F$ if $A$ is $F$-invariant, $F(A) = A$ and there exists $\sigma > 0$ such that $\lim_{i \to \infty} d_H(F^i(B_{\sigma}(A)), A) = 0.$ It is a \emph{minimal attractor} if there is no proper subset of $A$ that is also an attractor. The domain of attraction 
of an attractor $A$ for $F$ consists of all sets in $\mathcal{K}(\mathbb{R}^2)$ that are attracted to $A$, i.e. $\left\{B\in \mathcal{K}(\mathbb{R}^2) ~|~ \lim_{i \to \infty}d(F^i(B),A) = 0\right\}$ 
where $d$ denotes the semi-Hausdorff distance. It follows from the special nature of the set-valued map in \eqref{eq:setvaluedmap} that a set $B$ is contained in the domain of attraction of $A$ for $F$ if and only if $B\subset \mathcal{D}(A)$ with $\mathcal{D}$ as defined in \eqref{eq:domain_random}.

For each attractor $A$, there is a corresponding dual repeller $A^*$ which is the complement of its domain of attraction, i.e. $A^* = \mathbb R^2 \backslash \mathcal D(A)$. The dual repeller $A^*$ is an invariant set of a map $F^*$ (dual to $F$) which associates a set with the union of all points whose $F$-images intersect this set. For $F$ given by (\ref{eq:setvaluedmap}), and $Y\subset \mathbb{R}^2$, the dual map $F^*$ can be written as 
\begin{equation}\label{eq:dual}
    F^*(Y) := \bigcup_{y\in Y}F^*(y),~~ F^*(y) := \bigcup_{x\in \overline{B_{\varepsilon}(y)}}\{f^{-1}(x)\}.
\end{equation}

Minimal attractors of the set-valued map can undergo a discontinuous change with respect to the Hausdorff metric, a so-called \emph{topological bifurcation}~\cite{lamb2015topological}*{Definition~1.1}, as the set-valued map is perturbed. A necessary condition for topological bifurcation is the collision of the minimal invariant set and its dual repeller~\cite{lamb2015topological}*{Theorem~6.1}.

Bifurcation theory in set-valued maps is notoriously challenging~\cite{colonius2012dynamics,hans}. Many powerful tools for traditional bifurcation analysis, such as the implicit function theorem, are not readily available for set-valued maps. Given that the collision occurs on the boundary, it is important to approximate and analyse the boundary of the minimal attractor $\partial A$ and its dual repeller $\partial A^*$. Studies of the general structure of the attractor's boundary have proven to be challenging~\cite{kourliouros2023persistence,lamb2020boundaries,lamb2021boundaries}.

\section{The boundary map}\label{sec:bm}
We now consider a set $M$ with smooth boundary $\partial M$. Its \emph{unit normal bundle} consists
of the pairs $(x,n(x))$ of boundary points $x\in\partial M$ and the unit normal $n(x)$ to its tangent space $T_x\partial M$. As there are two conventions, of the outer or inner unit normal, we introduce the notation $N_1^\pm\partial M$ for the corresponding unit normal bundles (where $+$ denotes outer and $-$ inner).

In the context of this paper, where we consider set-valued maps in the plane $\mathbb{R}^2$, the relevant 
unit normal bundles are isomorphic to $\mathbb{R}^2\times S^1$. The \emph{boundary map} $\beta:\mathbb R^2 \times S^1 \to \mathbb R^2 \times S^{1}$, as introduced in~\cite{kourliouros2023persistence,tey2022minimal}, 
is designed to track the outer unit normal bundle of the boundary of a set $M$ under the iterations of the set-valued map $F$ in (\ref{eq:setvaluedmap}).
\begin{definition}[Boundary Map \cite{kourliouros2023persistence}]\label{def:boundary_mapping}
    Consider the set-valued map $F$ from \eqref{eq:setvaluedmap}, induced by a diffeomorphism $f:\mathbb R^2 \to \mathbb R^2$ and $\varepsilon > 0$. Then, the boundary map $\beta:\mathbb R^2 \times S^1 \to \mathbb R^2 \times S^1$ associated to $F$ is defined as
\begin{equation}\label{eq:boundary_map}
    \beta(x,n) := \left(f(x) + \varepsilon \frac{(f'(x)^T)^{-1}n}{\|(f'(x)^T)^{-1}n\|},\frac{(f'(x)^T)^{-1}n}{\|(f'(x)^T)^{-1}n\|}\right),
\end{equation}
where we consider the natural embedding $S^{1} = \left\{n \in \mathbb R^2 : \|n\|=1\right\}$. 
\end{definition}

In Figure~\ref{fig:boundary_map}, we consider the set-valued map sketched in Figure~\ref{fig:set-valued_illustrate}, and illustrate the corresponding action of the boundary map on a boundary point $m_1 \in \partial M_1$ of the set $M_1$ with outer unit normal vector $n$. The second component of the boundary map $\beta(m_1,n)$ from (\ref{eq:boundary_map}) is the outer normal to $f(M_1)$ at the point $f(m_1)$, given by  $n_1:=\frac{(f'(m_1)^T)^{-1}n}{\|(f'(m_1)^T)^{-1}n\|}$. 
The boundary point $f(m_1)$ of $\partial f(M_1)$ relates uniquely to the boundary point $m_2:=f(m_1)+\varepsilon n_1$ on $\partial M_2$. Importantly, both boundary points share the outer normal direction. The boundary map $\beta$ thus yields $\beta(m_1,n) = (f(m_1)+\varepsilon n_1,n_1)=(m_2,n_1)$.

\begin{figure}[h]
    \centering
    \includegraphics[width=0.6\linewidth]{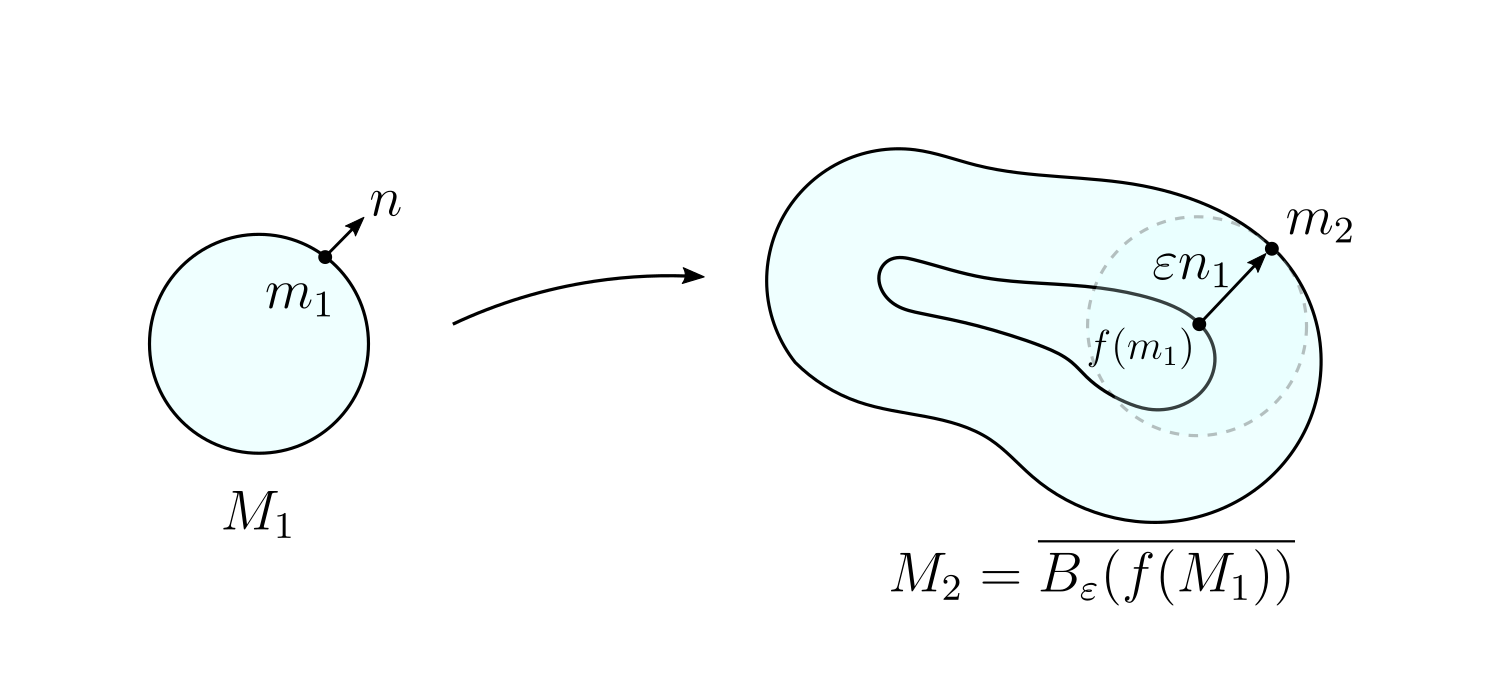}
    \captionsetup{width=\linewidth}\caption{The boundary map sends a boundary point $m_1 \in \partial M_1$ with unit normal vector $n$ to a boundary point $m_2 \in \partial M_2$ with unit normal vector $n_1 = \frac{(f'(m_1)^T)^{-1}n}{\|(f'(m_1)^T)^{-1}n\|}$.}
    \label{fig:boundary_map}
\end{figure}

Importantly, the unit normal bundle $N_1^+\partial A$ of an $F$-invariant set $A$, with continuously differentiable boundary $\partial A$, is $\beta$-invariant~\cite{kourliouros2023persistence,tey2022minimal}.
\begin{proposition}\label{prop:invariant}
    Consider an invariant set $A$ of the set-valued map $F$ with a continuously differentiable boundary $\partial A$. Then, the unit normal bundle $N_1^+\partial A$ is invariant under the boundary map $\beta$.
\end{proposition}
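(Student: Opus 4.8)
The plan is to verify directly that $\beta$ maps $N_1^+\partial A$ into itself, using the geometric description of $\beta$ developed in the paragraphs preceding the statement together with the invariance $F(A)=A$. First I would observe that, since $\partial A$ is continuously differentiable, every boundary point $x\in\partial A$ carries a well-defined outer unit normal $n(x)$, so $N_1^+\partial A = \{(x,n(x)) : x\in\partial A\}$. Fix such a pair $(x,n)$ with $n=n(x)$. The map $f$ is a diffeomorphism, so $f(A)$ is again a set with $C^1$ boundary, and $f(\partial A)=\partial f(A)$; moreover the outer unit normal to $f(A)$ at $f(x)$ is exactly $n_1 := (f'(x)^T)^{-1}n / \|(f'(x)^T)^{-1}n\|$. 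This is the standard transformation law for conormals under a diffeomorphism: if $\gamma(t)$ is a curve in $\partial A$ through $x$ then $f\circ\gamma$ lies in $\partial f(A)$ with tangent $f'(x)\gamma'(0)$, and $\langle (f'(x)^T)^{-1}n, f'(x)\gamma'(0)\rangle = \langle n,\gamma'(0)\rangle = 0$, so $(f'(x)^T)^{-1}n$ is normal to $\partial f(A)$ at $f(x)$; one then checks it points outward (not inward) because $f$, being a diffeomorphism, either preserves or reverses orientation consistently, and a short continuity/connectedness argument on $\partial A$ pins down the sign — this is the one point that needs a little care.

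Next I would use the structure of $F(X)=\overline{B_\varepsilon(f(X))}$: the set $F(A)$ is the $\varepsilon$-neighbourhood of $f(A)$, and every boundary point of $F(A)$ is of the form $y+\varepsilon m$ where $y\in\partial f(A)$ and $m$ is the outer unit normal to $f(A)$ at $y$; conversely this point has the same outer unit normal $m$ with respect to $\partial F(A)$ (this is the elementary fact about inflating a set by a ball, already used in the discussion around Figure~\ref{fig:boundary_map}). Applying this with $y=f(x)$ and $m=n_1$, the point $f(x)+\varepsilon n_1$ lies on $\partial F(A)$ with outer unit normal $n_1$. But by the first paragraph $f(x)+\varepsilon n_1$ is precisely the first component of $\beta(x,n)$ and $n_1$ its second component, so $\beta(x,n)\in N_1^+\partial F(A)$. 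Since $A$ is $F$-invariant, $F(A)=A$, hence $N_1^+\partial F(A)=N_1^+\partial A$, and therefore $\beta(x,n)\in N_1^+\partial A$. As $(x,n)\in N_1^+\partial A$ was arbitrary, $\beta(N_1^+\partial A)\subseteq N_1^+\partial A$, which is the claimed invariance.

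The main obstacle I anticipate is not the conormal transformation law itself — that is routine linear algebra — but the bookkeeping needed to guarantee that all the ``boundary point with its outer normal'' identifications are globally consistent and that $\partial f(A)$ and $\partial F(A)$ are genuinely $C^1$ with single-valued outer normals. In particular one must ensure that inflating $f(A)$ by the $\varepsilon$-ball does not create self-intersections or corners on $\partial F(A)$ near the point in question; since we only need the forward inclusion $\beta(x,n)\in N_1^+\partial A$ and we already know $F(A)=A$ has $C^1$ boundary by hypothesis, this is automatic, but it is worth stating explicitly. A secondary point is the orientation/sign argument identifying the \emph{outer} (rather than inner) normal after applying $f$ and after inflating; I would handle this by a connectedness argument on each component of $\partial A$, noting that the sign cannot jump and can be checked at a single point.
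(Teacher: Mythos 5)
Your overall strategy (push the outer normal through $f$ by the conormal law, then translate by $\varepsilon$ along the new normal, then use $F(A)=A$) is the right skeleton, and the first paragraph is essentially fine: the sign question for $(f'(x)^T)^{-1}n$ does not even need a connectedness argument, since a diffeomorphism of $\mathbb{R}^2$ sends the interior/exterior of $A$ to the interior/exterior of $f(A)$, and $\langle (f'(x)^T)^{-1}n, f'(x)v\rangle=\langle n,v\rangle>0$ for any outward-pointing $v$. The paper itself does not prove this proposition (it cites \cite{kourliouros2023persistence} and \cite{tey2022minimal}), so the comparison here is with the framework the paper sets up rather than with a written proof.

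The genuine gap is in your second paragraph, at the assertion that $f(x)+\varepsilon n_1$ ``lies on $\partial F(A)$ with outer unit normal $n_1$'' and that this is ``automatic'' because $\partial F(A)=\partial A$ is $C^1$. This is exactly the point the paper's discussion of contributors is warning about: in general, a point $y\in\partial f(M)$ need \emph{not} contribute to $\partial F(M)$ --- the point $y+\varepsilon m(y)$ can lie in the \emph{interior} of $F(M)$ whenever some other part of $f(M)$ comes within distance $\varepsilon$ of it (the point $y_3$ in Figure~\ref{fig:contribute}); this is why the unconditional statement is only the reverse inclusion $\beta(N_1^+\partial M)\supset N_1^+\partial F(M)$ of \eqref{eq:backward_invariant}, which for $A=F(A)$ reads $\beta(N_1^+\partial A)\supseteq N_1^+\partial A$ --- the opposite of what you need. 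Smoothness of $\partial F(A)$ gives you \emph{uniqueness} of the contributor of each point of $\partial F(A)$; it does not by itself give \emph{surjectivity} of the contributor correspondence onto $\partial f(A)$, which is what your step requires. The missing argument can be supplied, e.g.: by uniqueness and a standard compactness argument the contributor map $c:\partial A=\partial F(A)\to\partial f(A)$ is a well-defined continuous injection between closed $C^1$ curves, and a continuous injection of a circle into a circle must be onto (its image is compact, connected, and homeomorphic to $S^1$, hence cannot be a proper arc); therefore every point of $\partial f(A)$ is a contributor and $y\mapsto y+\varepsilon m(y)$ is the inverse of $c$, which closes your argument and simultaneously yields the reverse inclusion, so that one gets the equality $\beta(N_1^+\partial A)=N_1^+\partial A$ that the paper means by ``invariant'' (and uses as such in Lemma~\ref{lemma:image_invariance}) rather than only the one inclusion you state.
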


Remarkably, the boundary map associated to $F$ also
tracks the (inner) normal bundle of the boundary of a set $A^*$ under the action of the dual $F^*$ in (\ref{eq:dual}).
\begin{proposition}\label{prop:dual_invariant}
    Consider an invariant set $A^*$ of the set-valued map $F^*$ with a continuously differentiable boundary $\partial A^*$. Then, the unit normal bundle $N_1^-\partial A^*$ is invariant under the boundary map $\beta$.
\end{proposition}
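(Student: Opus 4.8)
The plan is to reformulate the $F^{*}$-invariance of $A^{*}$ as a set identity and then to follow the pair $(x,n)$ through the two elementary operations out of which the boundary map in \eqref{eq:boundary_map} is built; this is the dual counterpart of Proposition~\ref{prop:invariant}, with outer normals and $f$ replaced by inner normals and the ``fatten, then apply $f^{-1}$'' structure of $F^{*}$ in \eqref{eq:dual}. Since \eqref{eq:dual} gives $F^{*}(Y)=f^{-1}\big(\overline{B_\varepsilon(Y)}\big)$ for every closed set $Y$, the identity $F^{*}(A^{*})=A^{*}$ is equivalent to $\overline{B_\varepsilon(A^{*})}=f(A^{*})=:B$. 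As $f$ is a diffeomorphism and $\partial A^{*}$ is continuously differentiable, $\partial B=f(\partial A^{*})$ is continuously differentiable as well; and since $B=\overline{B_\varepsilon(A^{*})}$, every point of $\partial B$ lies at distance exactly $\varepsilon$ from $A^{*}$, i.e.\ $\operatorname{dist}(\,\cdot\,,A^{*})\equiv\varepsilon$ on $\partial B$.

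Next I would fix $(x,n)\in N_1^{-}\partial A^{*}$ and write $\nu$ for the second component of $\beta(x,n)$, that is, the unit vector parallel to $(f'(x)^{T})^{-1}n$. Because $f$ is a homeomorphism of $\mathbb R^{2}$, it maps $\operatorname{int}A^{*}$, $\partial A^{*}$ and the exterior of $A^{*}$ onto the corresponding parts of $B$; together with the standard fact that $n\mapsto(f'(x)^{T})^{-1}n$ sends normals of $\partial A^{*}$ at $x$ to normals of $\partial B$ at $f(x)$, and the identity $\big\langle(f'(x)^{T})^{-1}n,\,f'(x)n\big\rangle=\|n\|^{2}>0$, this identifies $\nu$ as the \emph{inner} unit normal of $\partial B$ at $f(x)$. (The sign of $\det f'$, which equals $-b$ for the H\'{e}non map, plays no role here.)

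Finally I would run a nearest-point argument on $B=\overline{B_\varepsilon(A^{*})}$. Choosing $a\in A^{*}$ with $\|f(x)-a\|=\operatorname{dist}(f(x),A^{*})=\varepsilon$ (such a point exists since $A^{*}$ is closed), the fact that $\operatorname{dist}(\,\cdot\,,A^{*})\equiv\varepsilon$ on $\partial B$ makes $f(x)$ minimise $q\mapsto\|q-a\|$ over the $C^{1}$ curve $\partial B$, so $a-f(x)\perp T_{f(x)}\partial B$; since $a-f(x)$ also points into $B$ (distances to $A^{*}$ strictly decrease along the segment $[f(x),a]$), it equals $\varepsilon\nu$, whence $a=f(x)+\varepsilon\nu$. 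On the other side, $a\notin\operatorname{int}A^{*}$ (otherwise one could move from $a$ towards $f(x)$ and strictly decrease the distance), so $a\in\partial A^{*}$; minimality of $q\mapsto\|f(x)-q\|$ over $\partial A^{*}$ forces $f(x)-a\perp T_a\partial A^{*}$, and as $f(x)-a$ points out of $A^{*}$ (by the $1$-Lipschitz property of $\operatorname{dist}(\,\cdot\,,A^{*})$, every point of $[a,f(x)]$ other than $a$ lies outside $A^{*}$), the outer unit normal of $\partial A^{*}$ at $a$ equals $\varepsilon^{-1}(f(x)-a)=-\nu$, hence the inner one is $\nu$. Therefore $\beta(x,n)=(f(x)+\varepsilon\nu,\nu)=(a,\nu)\in N_1^{-}\partial A^{*}$, which gives $\beta\big(N_1^{-}\partial A^{*}\big)\subseteq N_1^{-}\partial A^{*}$; equality, if desired, follows by applying the same reasoning with $f^{-1}$ in place of $f$, or from the fact that $\beta$ is a diffeomorphism of $\mathbb R^{2}\times S^{1}$.

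The step I expect to be the main obstacle is this last one: one has to be sure that the point of $A^{*}$ closest to $f(x)$ is well defined, lies on $\partial A^{*}$, and carries inner normal exactly $\nu$, since a priori the $\varepsilon$-neighbourhood of a merely $C^{1}$ set can develop wedge or focal singularities. It is precisely at this point that the hypothesis that $\partial A^{*}$---and hence $\partial B=f(\partial A^{*})$---is continuously differentiable enters; the remainder is bookkeeping with the chain rule and Definition~\ref{def:boundary_mapping}.
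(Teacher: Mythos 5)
Your argument is correct in substance but follows a genuinely different route from the paper's. The paper does not argue geometrically at all: it factors the boundary map as $\beta_f=g_\varepsilon\circ h_f$ with $h_f(x,n)=\bigl(f(x),\tfrac{(f'(x)^T)^{-1}n}{\|(f'(x)^T)^{-1}n\|}\bigr)$ and $g_\varepsilon(x,n)=(x+\varepsilon n,n)$, first proves a lemma (via Proposition~\ref{prop:invariant} and sign-flipping $n\mapsto -n$) that for an $F_f$-invariant set $A$ the inner bundle $N_1^-\partial f(A)$ is $\beta_{f^{-1}}$-invariant, and then applies this lemma to $f^{-1}$ and to the set $\overline{B_\varepsilon(A^*)}$, which is $F_{f^{-1}}$-invariant, using $f^{-1}(\overline{B_\varepsilon(A^*)})=F^*(A^*)=A^*$. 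Your proof instead extracts the same key identity $\overline{B_\varepsilon(A^*)}=f(A^*)$ directly from $F^*(A^*)=A^*$ and then verifies the invariance by hand with a nearest-point/first-order-condition argument on the $C^1$ curves $\partial A^*$ and $\partial f(A^*)$. What your route buys is self-containedness and an explicit geometric picture (it identifies $f(x)+\varepsilon\nu$ as the unique foot point of $f(x)$ on $A^*$, which is more information than the formal manipulation yields); what the paper's route buys is brevity and the avoidance of exactly the regularity worries you flag at the end, since all the geometry is already packaged in Proposition~\ref{prop:invariant}.

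The one place where you are materially thinner than the paper is the reverse inclusion. The paper proves $\beta^{-1}_{f^{-1}}(N_1^-f(\partial A))\subset N_1^-f(\partial A)$ separately to upgrade $\subseteq$ to $=$. Your closing remark that equality ``follows from the fact that $\beta$ is a diffeomorphism'' is not sufficient on its own: a diffeomorphism can map an invariant set properly into itself under forward iteration of the inclusion you proved. Running ``the same reasoning with $f^{-1}$'' requires showing that for every $(a,\nu)\in N_1^-\partial A^*$ the point $a-\varepsilon\nu$ lies on $\partial f(A^*)$, i.e.\ that $a$ is at distance exactly $\varepsilon$ from $\operatorname{ext}$-side points, which is a reach-type condition not automatic from $C^1$-ness; it can be recovered (e.g.\ your foot-point map $\partial f(A^*)\to\partial A^*$ is injective because the outward normal at a $C^1$ boundary point of a planar set is unique, and a continuous injection between the compact boundary circles is onto), but this step should be spelled out rather than asserted.
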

The proof of this proposition is presented in Appendix~\ref{appb}.
Propositions~\ref{prop:invariant} and \ref{prop:dual_invariant} show that invariant sets of the boundary map $\beta$ relate to attractors and repellers, alike.

Figure~\ref{fig:set-valued_illustrate} illustrates that singularities (points of non-differentiability) can arise on the boundaries of iterates of a set with smooth boundary. Indeed, such singularities can exist also on the boundary $\partial A$ of an invariant set $A$. To characterise the occurrence of such singularities, it is useful to introduce the concept of a \emph{contributor}~\cite{lamb2020boundaries}. 
We call a point $y\in \partial f(M)$ a contributor 
of a point $x\in \partial F(M)$, if $\|x-y\|=\varepsilon$.

We illustrate the concept of contributors in Figure~\ref{fig:contribute}. We observe that singularities have more than one contributor. Indeed, it turns out that $\partial F(M)$ is differentiable at a point $x\in\partial F(M)$ if and only if $x$ has a unique contributor $y\in \partial f(M)$. 

Moreover, in the presence of singularities, not all points $y\in f(M)$ are contributors to $\partial F(M)$. Hence
\begin{equation}\label{eq:backward_invariant}
    \beta^i(N_1^+\partial M) \supset N_1^+\partial F^i(M), \;\text{for all } i \in \mathbb{N}.
\end{equation} 
In other words, the normal bundle of the smooth part of the boundary of a minimal attractor is backward invariant under $\beta$. Conversely, the normal bundle of the smooth part of its dual repeller's boundary is forward invariant under $\beta$.

\begin{figure}[h]
    \centering
    \includegraphics[width=0.8\textwidth]{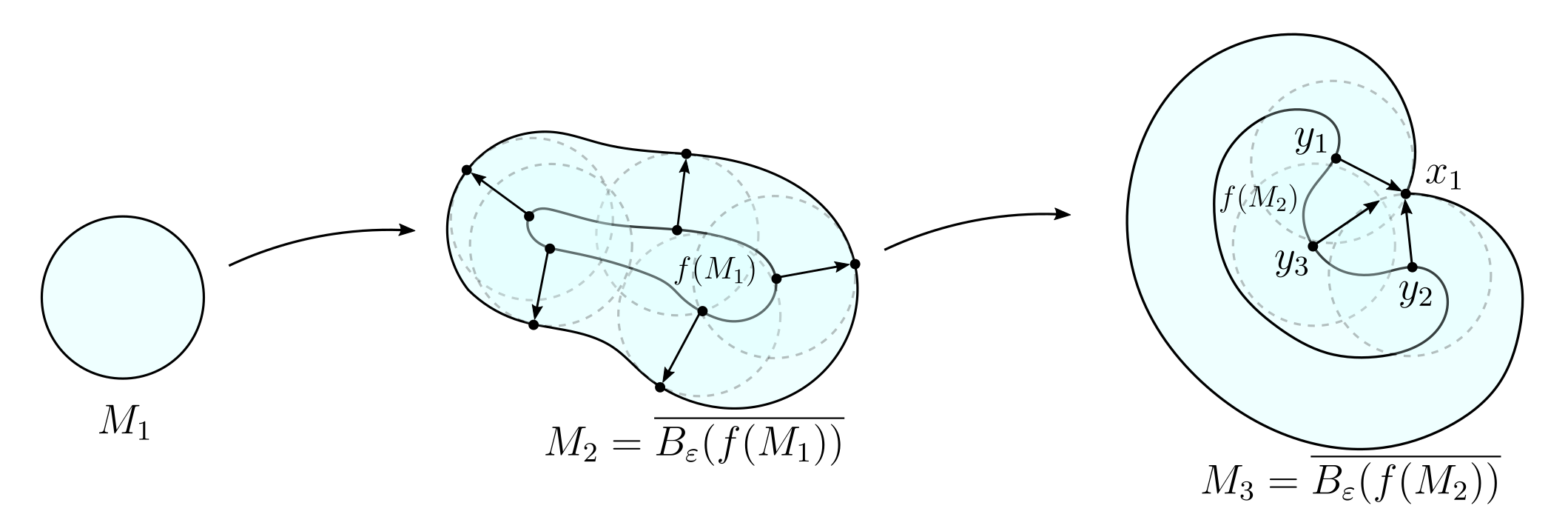}
    \captionsetup{width=\linewidth}\caption{Illustration of contributors to boundary points using the sets $M_1, M_2$ and $M_3$ in Figure~\ref{fig:set-valued_illustrate}. Each point on the smooth boundary $\partial M_2$ has a unique contributor on $\partial f(M_1)$. Conversely, the boundary point $x_1 \in \partial M_3$ has two distinct contributors $y_1,y_2 \in \partial f(M_2)$, resulting in a singular (non-differentiable) boundary point of wedge type. There are also points on $\partial f(M_2)$ which do not contribute to any boundary points on $\partial M_3$, for example, the point $y_3 \in \partial f(M_2)$.}
    \label{fig:contribute}
\end{figure}

We now consider the case where the boundary $\partial A$ of a minimal attractor $A$ is continuously differentiable. Due to the uniqueness of contributors of boundary points on $\partial A$, the boundary map $\beta$, when restricted to the normal bundle $\beta|_{N^+_1\partial A}$ is topologically conjugate to a circle homeomorphism. It is well known that a circle homeomorphism can either consist of periodic points and connections between these periodic points or represent an irrational rotation. In this paper, we focus on the situation where the normal bundle contains a finite number of periodic points of the boundary map $\beta$. It has been established that the normal bundle $N^+_1\partial A$ is normally attracting~\cite{kourliouros2023persistence}. Consequently, the normal bundle includes periodic points of $\beta$ along with the one-dimensional \emph{unstable} manifolds of saddle periodic points.

For minimal attractor $A$ with a piecewise smooth boundary $\partial A$ containing singularities, not all boundary points on the image $\partial f(A)$ contribute to $\partial A$. Here, the normal bundle $N^+_1\partial A$ remains backward invariant under the map $\beta$, forming a proper subset of the unstable manifolds associated to certain saddle periodic points.

Likewise, for the boundary $\partial A^*$ of a dual repeller $A^*$, which is continuously differentiable, its normal bundle $N^-_1\partial M^*$ is composed of periodic points of $\beta$ and the one-dimensional \emph{stable} manifolds of saddle periodic points. In the case of a dual repeller with a piecewise smooth boundary, the inward normal bundle of the smooth part of the boundary $N^-_1\partial M^*$, is forward invariant under the boundary map $\beta$.

\section{Numerical detection of the bifurcations of minimal attractors}\label{sec:numerical}

We now proceed to illustrate the use of boundary map (\ref{def:boundary_mapping}) to study numerically the boundary of minimal attractors $A$ and their dual repellers $A^*$ for the H\'{e}non map with bounded noise (\ref{eq:randomdiff}), considering various parameter regimes to showcase different types of bifurcations. 

In this section, we show that topological and boundary bifurcations of minimal attractors are related to bifurcations of periodic points of the boundary map and their associated one-dimensional invariant manifolds. Topological bifurcations are shown to be related to a fold bifurcation of periodic orbits of the boundary map in Section~\ref{sec:top_bif}, or a heteroclinic bifurcation involving two distinct fixed points in Section~\ref{sec:hetero}. Analogously for boundary bifurcations, wedge singularities are shown to arise when eigenvalues of periodic orbits change from real-valued to complex-valued in Section~\ref{sec:complex}, while the emergence of a cascade of wedge singularities is associated with a non-transversal intersection between the unstable manifold of a saddle fixed point and the strong stable foliations
of a stable periodic point, in Section~\ref{sec:cascade}. The ordering of subsections has been chosen to reflect the increasing complexity of the bifurcation: Section~\ref{sec:complex} and Section~\ref{sec:top_bif} delve into bifurcations detectable through monitoring the eigenvalues at periodic points, while Section~\ref{sec:hetero} and Section~\ref{sec:cascade} concerns global bifurcation in the boundary map. The figures presented in this paper can be reproduced using the source codes in~\cite{Tey2024}. The set-oriented numerical method is implemented in GAIO~\cite{dellnitz2001algorithms} and the approximation of one-dimensional unstable and stable manifolds follows standard techniques (cf.~\cite{you1991calculating}).

\begin{figure}[!b]
    \begin{subfigure}[t]{0.48\textwidth}
    \centering
    \includegraphics[width=1\textwidth]{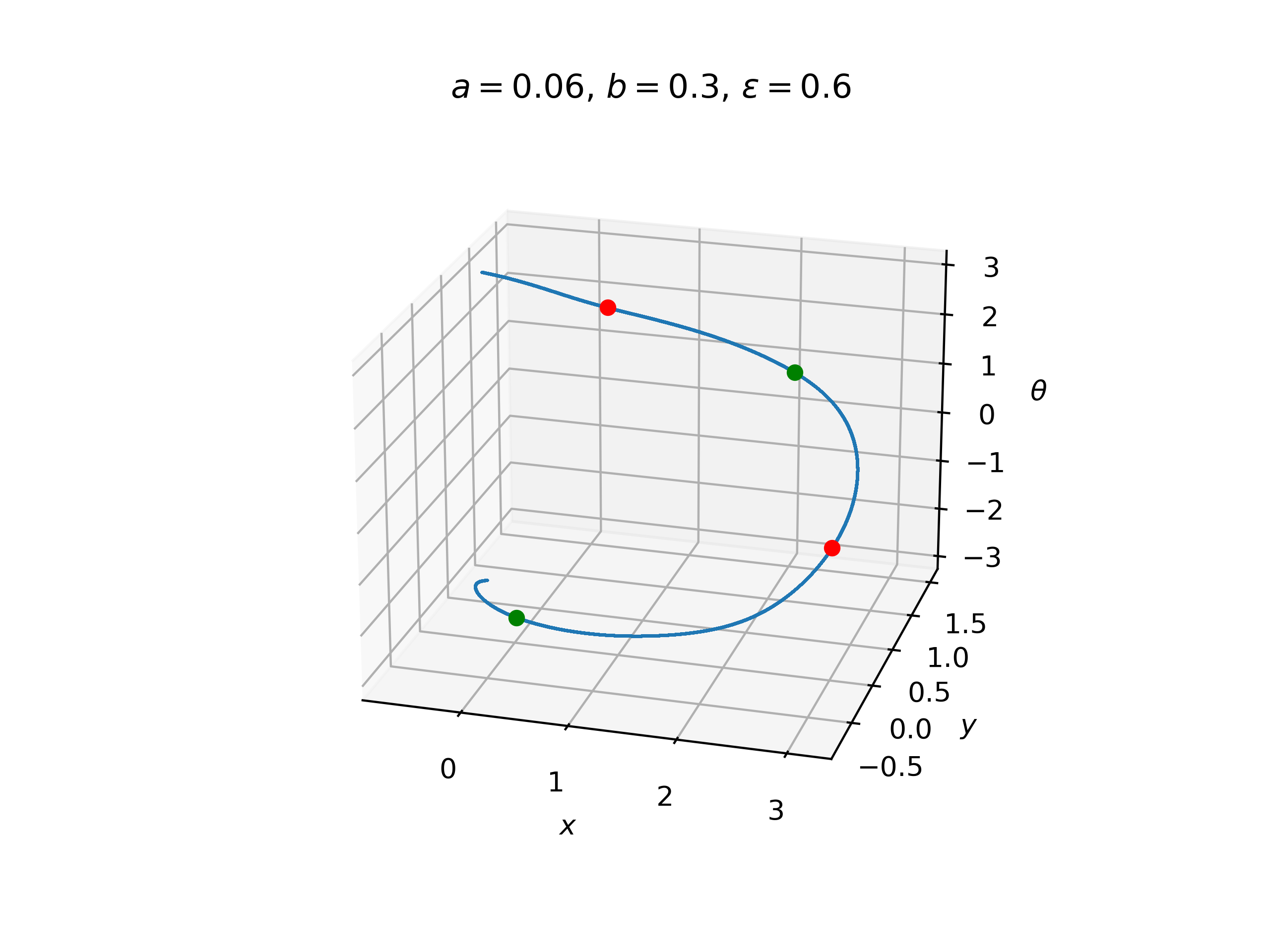}
    \captionsetup{width=\linewidth}\caption{Normal bundle of the boundary of the minimal attractor, formed by a heteroclinic cycle of the boundary map. Unit normal vectors $n=(n_1,n_2)^T\in S^1$ 
are represented by the angle $\theta := \arctan(n_2/n_1) \in (-\pi, \pi]$. }
    \label{fig:3d_a006_eps06}
    \end{subfigure}
    \hfill
    \begin{subfigure}[t]{0.48\textwidth}
    \centering
    \includegraphics[width=1\textwidth]{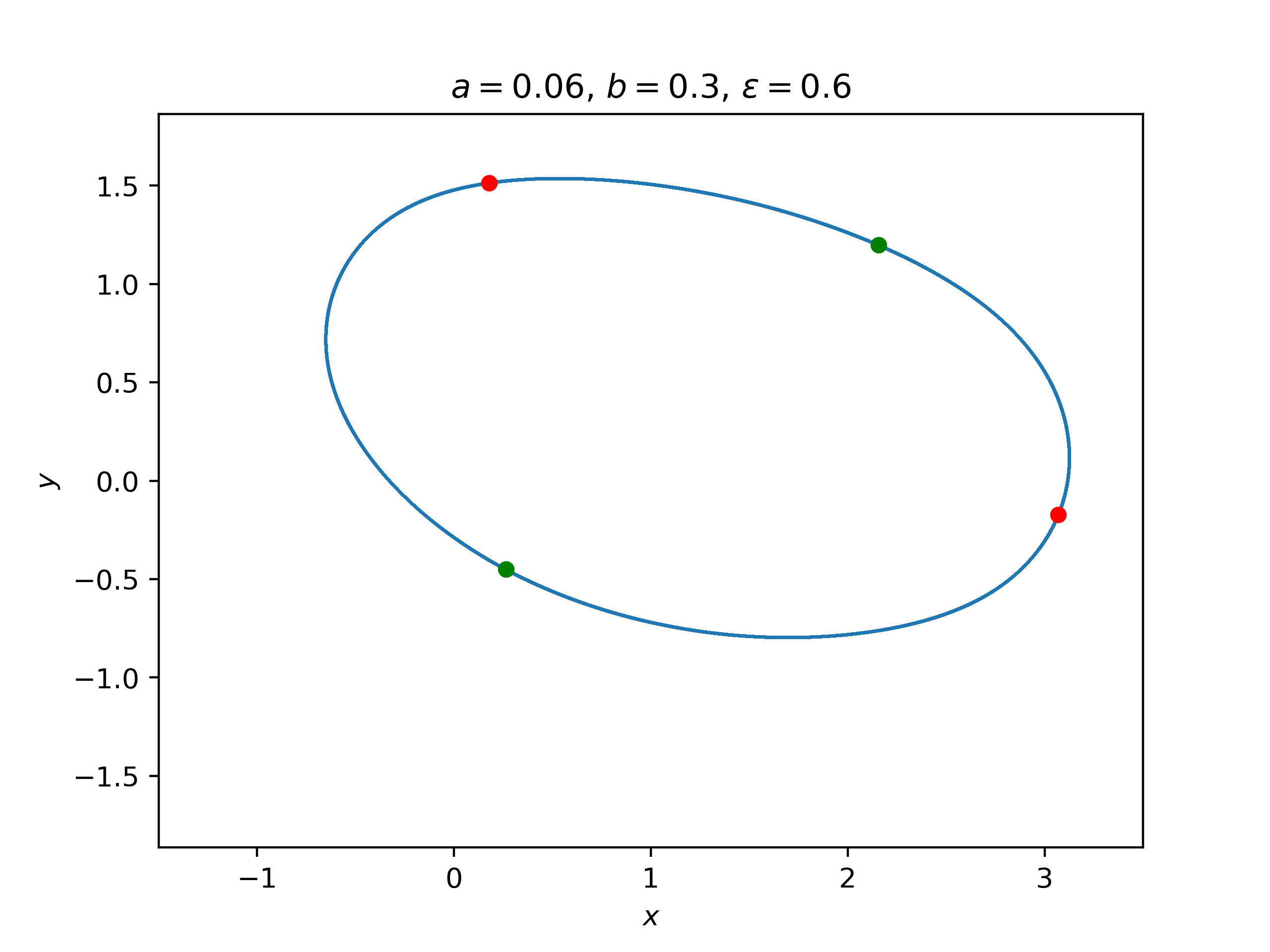}
    \captionsetup{width=\linewidth}\caption{Boundary of the minimal attractor: the orthogonal projection of the heteroclinic cycle in (a).}
    \label{fig:a006_eps06_2d}
    \end{subfigure}
    \captionsetup{width=\linewidth}\caption{Numerical approximation of the normal bundle of the boundary of the minimal attractor of the  H\'{e}non map with bounded noise \eqref{eq:randomdiff} with parameter values $a = 0.06, b = 0.3$ and $\varepsilon = 0.6$. The normal bundle is a heteroclinic cycle of the boundary map $\beta$ \eqref{eq:boundary_map}, consisting of a saddle periodic orbit of period two (red dots), two stable fixed points (green dots) and saddle connections between them (the unstable manifolds of the saddle periodic points), see (a).  The projection of this cycle on the $x$-$y$ plane in (b) yields the boundary of the minimal attractor. 
    All fixed and periodic points in the heteroclinic cycle have real eigenvalues and the boundary is smooth.
    }
    \label{fig:eps06_a006}
\end{figure}

\begin{figure}[h!]
    \begin{overpic}[width=.52\textwidth,trim={0 0 0 25},clip]{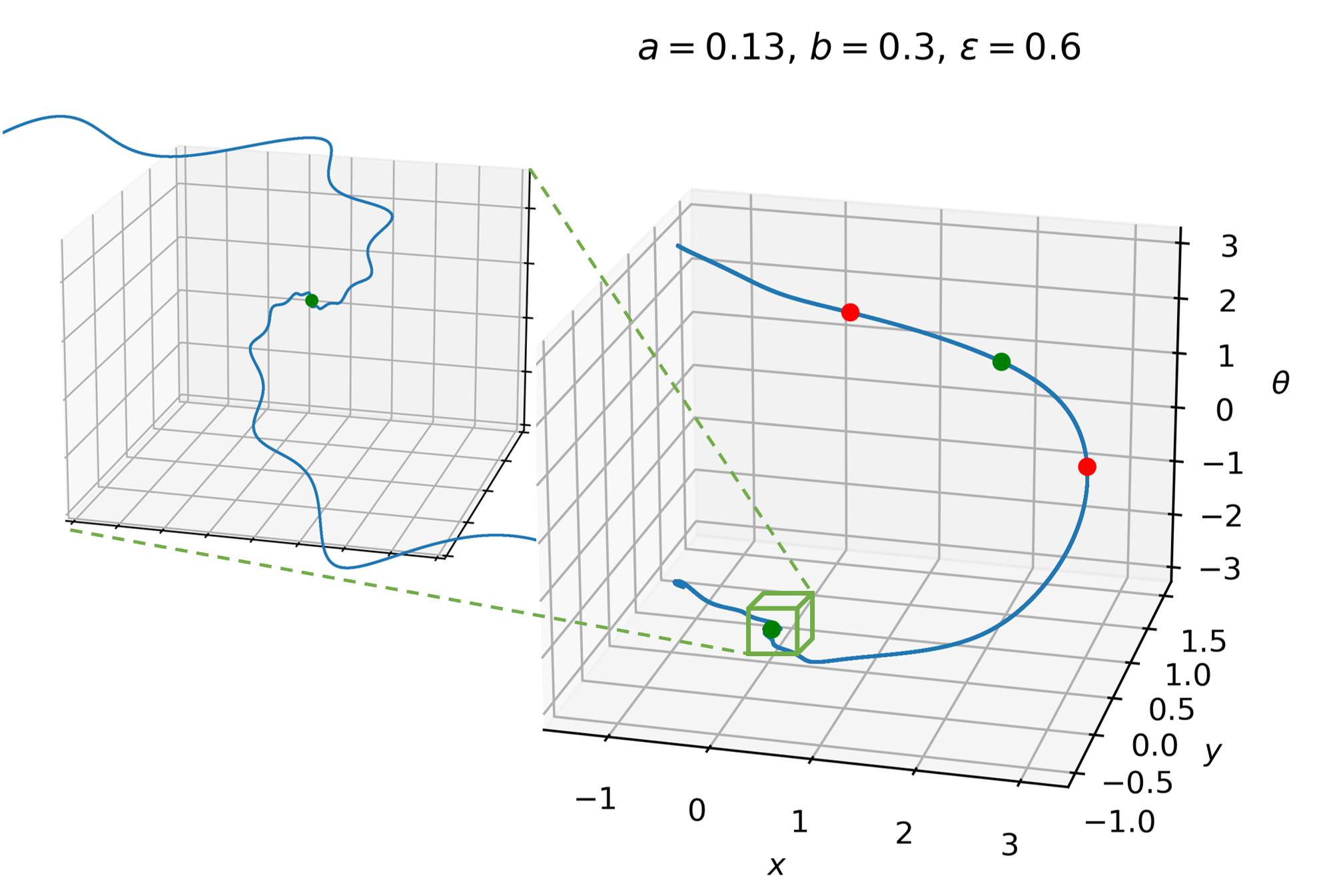}
    \put(0,0){
    (a)
    }
    \end{overpic}
    \begin{overpic}[width=.46\textwidth,trim={0 0 0 40},clip]{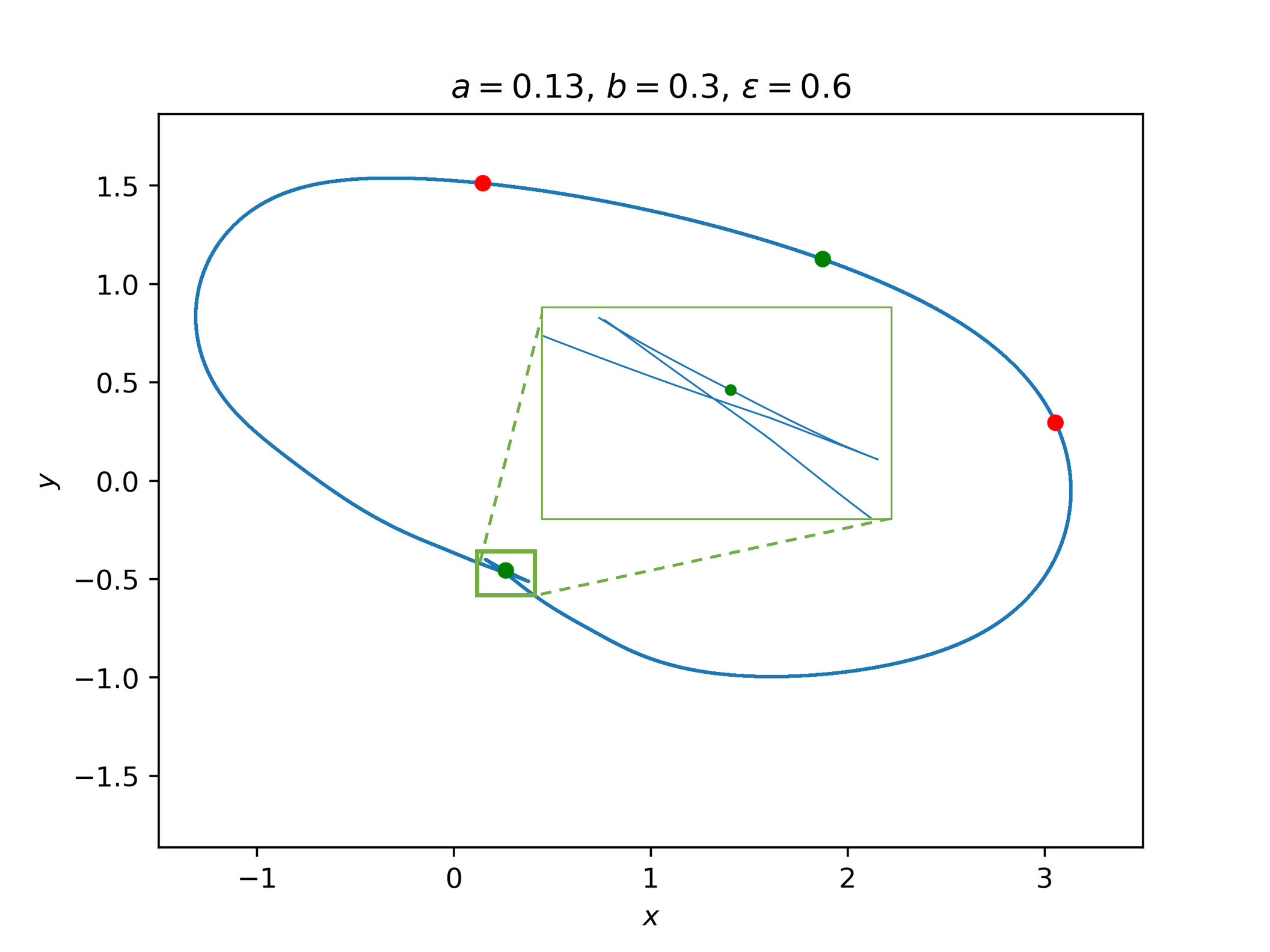}
    \put(0,0){
    (b)
    }
    \end{overpic}

    \captionsetup{width=\linewidth}\caption{Boundary of the minimal attractor of the H\'{e}non map with bounded noise \eqref{eq:randomdiff} with parameter values $a=0.13$, $b=0.3$ and $\varepsilon = 0.6$, displaying a wedge singularity.
    On the left, in (a), a numerical approximation of the heteroclinic cycle of the boundary map is depicted, representing the normal bundle of the boundary.
    The Jacobian of the boundary map at the fixed point $(0.261,-0.455,-2.046)$ on this heteroclinic cycle has eigenvalues $\{0.515,-0.685+0.215i,-0.685-0.215i\}$. These complex eigenvalues cause a spiralling of the unstable manifolds that approach this fixed point in the heteroclinic cycle. The projection of these spiralling manifolds gives rise to a wedge singularity, see the magnified inset in part (a). The projection of the heteroclinic cycle to the state space is depicted in part (b), with the inset showing a magnification of the area with the wedge.
    }
    \label{fig:eps06_a01_013}
\end{figure}

\subsection{Creation of wedge singularity through complex eigenvalue splitting for periodic orbits of the boundary map}\label{sec:complex}
We consider the birth of an isolated singular (wedge) point on a smooth (part of the) boundary. This boundary bifurcation arises at a fixed point of the boundary map when the eigenvalues of the linearised boundary map change from real to complex.

We consider the H\'{e}non map with bounded noise (\ref{eq:randomdiff}) with parameter values $b = 0.3$, $a = 0.06$, and $\varepsilon = 0.6$, which we find to have a unique minimal attractor with smooth boundary with a normal bundle that is a heteroclinic cycle of the boundary map $\beta$ \eqref{eq:boundary_map} between a two-periodic saddle orbit and two stable fixed points, see Figure~\ref{fig:3d_a006_eps06}. The saddle points have one-dimensional unstable manifolds that connect to the stable fixed points.  
The
orthogonal projection of the heteroclinic cycle in Figure~\ref{fig:3d_a006_eps06} forms the boundary of the minimal attractor, see Figure~\ref{fig:a006_eps06_2d}.

The eigenvalues
of the Jacobian at the stable fixed point with coordinates $(0.263,-0.451,$ $-2.060)$ are
$\{0.532,-0.768,-0.693\}$. We approach a boundary bifurcation by gradually increasing the value of the parameter $a$ from $a=0.06$ and follow the continuation of the heteroclinic cycle whose projection yields the minimal attractor. The eigenvalues of the Jacobian of the continuation of the fixed point, mentioned above,   remain real, until at $a \approx 0.06191$, the two negative eigenvalues collide on the real axis, after which they branch off into the complex plane (as a complex conjugate pair). We observe that the change of stability type of the fixed point, which induces a spiralling behaviour of the connecting unstable manifold to the stable fixed point, creates a wedge singularity on the boundary of the attractor.
The situation is sketched in detail in Figure~\ref{fig:eps06_a01_013} at $a=0.13$. 

We note that the non-smoothness of the boundary at the wedge singularity corresponds to a point of self-intersection of the projection of the heteroclinic cycle of the boundary map. Indeed, the smoothness of the boundary is associated with the absence of such intersections.

It turns out that the form of the boundary map gives rise to a special relationship between the eigenvectors and eigenvalues of the Jacobian at a fixed point of the boundary map when the Jacobian has complex eigenvalues. We observe from direct calculations (see Proposition~\ref{prop:eigenrelation} in Appendix~\ref{sec:eigen}) that there is a unique splitting into a direct sum of a one-dimensional and a two-dimensional subspace of the (three-dimensional) tangent space that are invariant under the Jacobian, where the two-dimensional space (naturally) projects to the (tangent space of the) state space $\mathbb{R}^2$ as a line. 
Moreover, if $\lambda_1$ denotes the eigenvalue corresponding to the one-dimensional invariant subspace and $\lambda_2,\lambda_3$ are the complex eigenvalues then
$\lambda_1=\lambda_2\lambda_3$. 
Consequently, there is a spectral gap between these invariant subspaces so that there exists a unique two-dimensional weak stable manifold (whose tangent space is equal to the two-dimensional invariant subspace, mentioned above) and every orbit of the boundary map that starts near the fixed point approaches the fixed point along this weak stable manifold.

The birth of an isolated wedge singularity can be understood, already, from the linearisation of the boundary map near a stable fixed point. For instance, in Figure~\ref{fig:linear} we sketch the situation as arises in the linearisation of the boundary map and its relevant projection, for the H\'enon map example at parameter values $a = 0.06$ (real eigenvalues) and $a = 0.13$ (pair of complex conjugate eigenvalues).

\begin{figure}[t]
    \begin{overpic}[width=.48\textwidth,trim={0 0 0 0},clip]{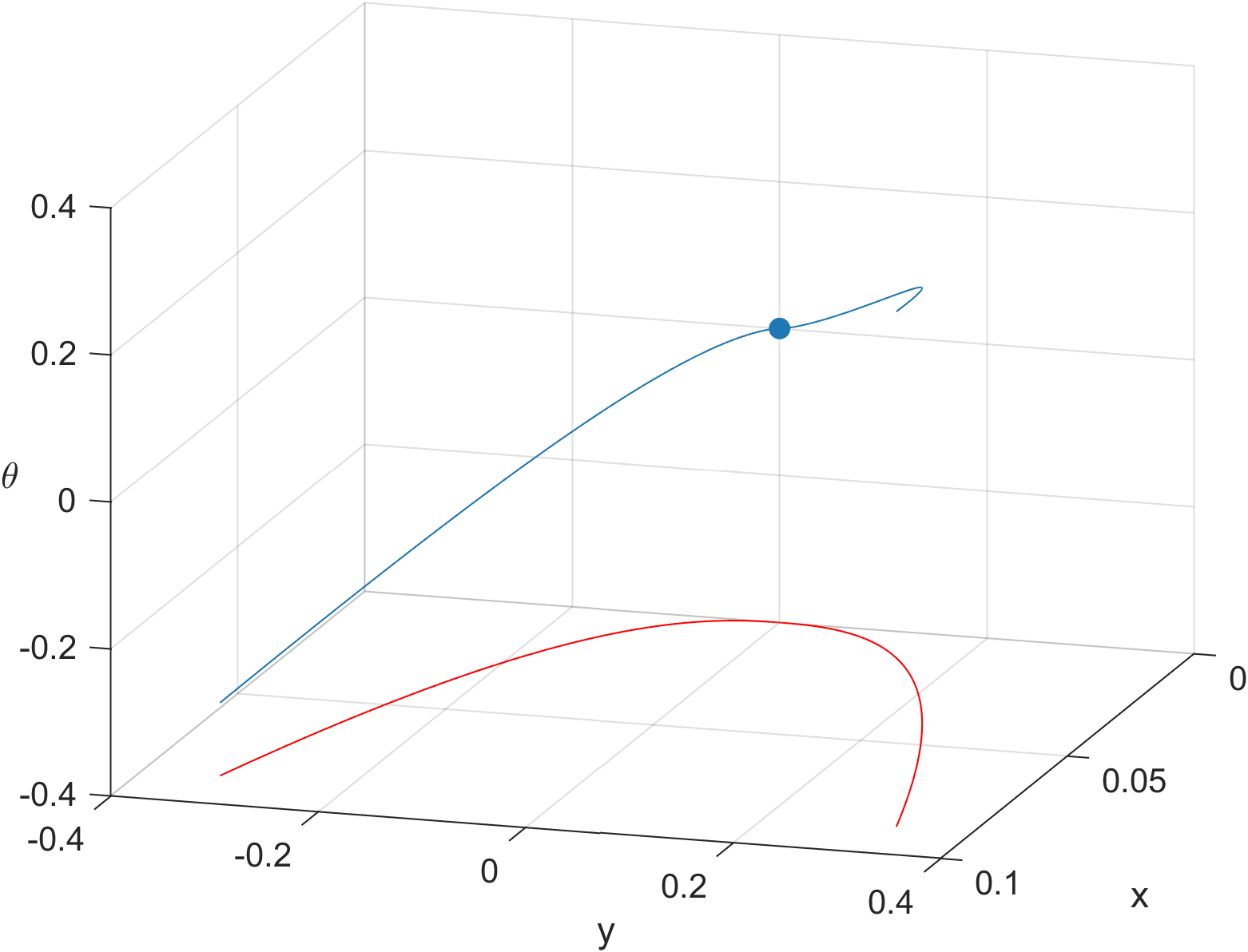}
    \put(0,0){
    (a)
    }
    \end{overpic}
    \begin{overpic}[width=.48\textwidth,trim={0 0 0 0},clip]{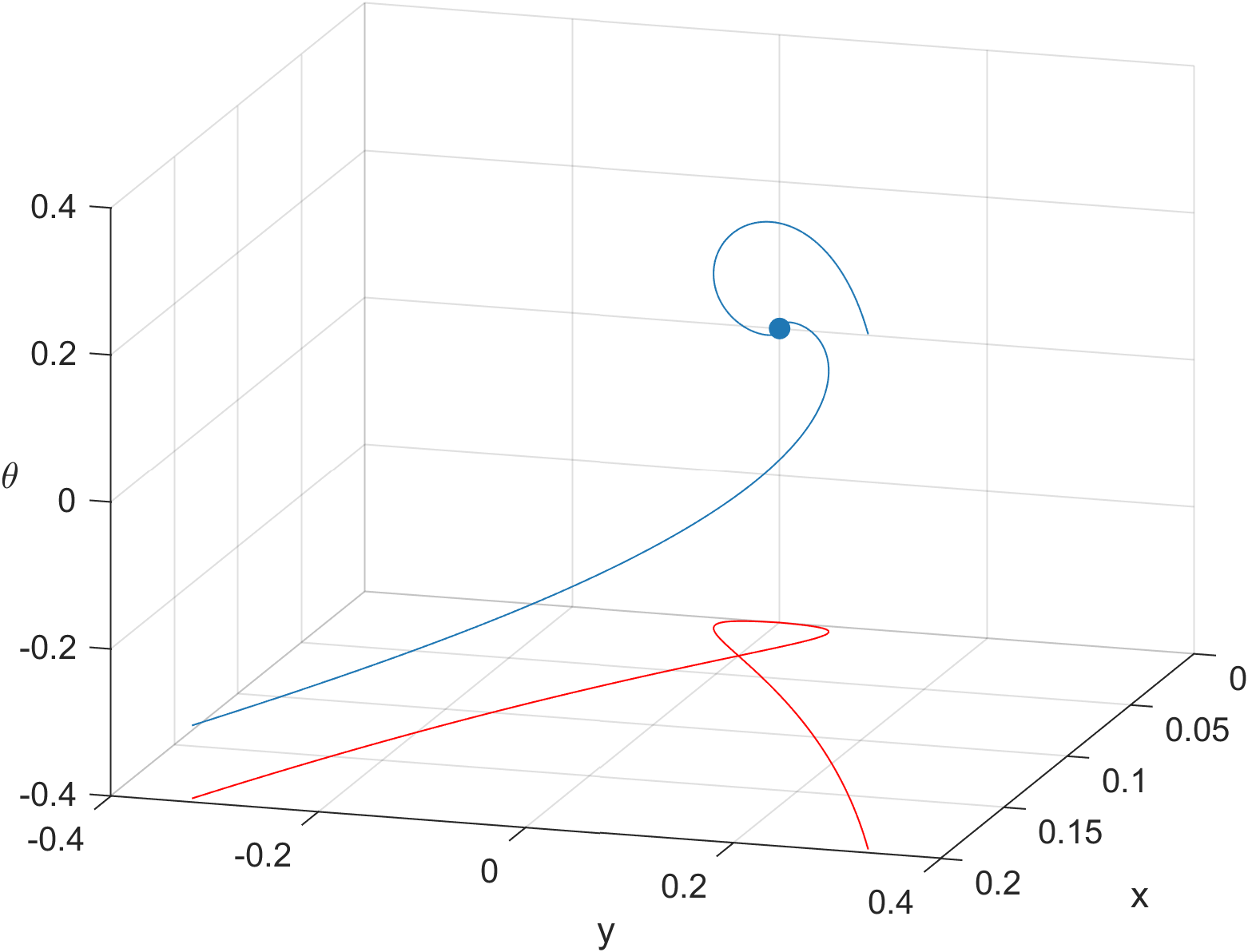}
    \put(0,0){
    (b)
    }
    \end{overpic}
    \captionsetup{width=\linewidth}\caption{Sketch of the birth of an isolated wedge singularity in relation to boundary map dynamics. We model the linearised dynamics near a stable fixed point of the boundary map as found in the H\'{e}non map, as in Figure~\ref{fig:eps06_a01_013}, with (a) $a=0.06$ (real eigenvalues) and (b) $a = 0.13$ (complex eigenvalues). We consider the linearised dynamics in Jordan normal form, to simplify the presentation: $\left(\begin{smallmatrix}
    0.532&0&0\\
    0&-0.768&0\\
    0&0&-0.693
    \end{smallmatrix}\right)$ in (a) and $\left(\begin{smallmatrix}
    0.515&0&0\\
    0&-0.685&0.215\\
    0&-0.215&-0.685
    \end{smallmatrix}\right)$ in (b). Blue curves represent smooth invariant manifolds approaching the stable fixed point and their relevant projections are presented in red. The transition from real to complex eigenvalues coincides with the birth of the wedge singularity.}
    \label{fig:linear}
\end{figure}

The properties of eigenvalues and eigenvectors mentioned above, give rise to specific properties of the relevant projection.
In the case of real eigenvalues, smooth invariant curves approaching the stable fixed point from opposite directions, project to a smooth curve, see Figure~\ref{fig:linear}(a). In the case of complex eigenvalues, the invariant subspace for the Jacobian corresponding to the complex eigenvalues projects to a line. 
 Then, smooth invariant curves spiralling toward the fixed point from opposite directions outside of this subspace, give rise to a cascade of transverse intersection points in the projection, the first one creating a wedge singularity as seen in Figure~\ref{fig:linear}(b). The rest of the intersections are too close to the origin, to be visible in this figure.
 
This example shows how a change of stability type of a stable fixed point of the boundary map on the normal bundle of the minimal attractor from real to complex eigenvalues leads to the birth of an isolated wedge singularity. Indeed, only the first intersection, which creates the wedge, is relevant to the boundary, as the remainder projects to the interior of the attractor, cf. Figure~\ref{fig:eps06_a01_013}(b). We conjecture that the birth of singularity following a stability type change of the eigenvalues from real to complex generically applies. A mathematical proof, taking into account the relevant geometric setting of the boundary map, requires further formalisation and is beyond the scope of this paper.

\begin{figure}[!b]
\begin{minipage}{.007\textwidth}
    \text{}
\end{minipage}
\begin{overpic}[width=.47\textwidth,trim={0 0 0 0},clip]{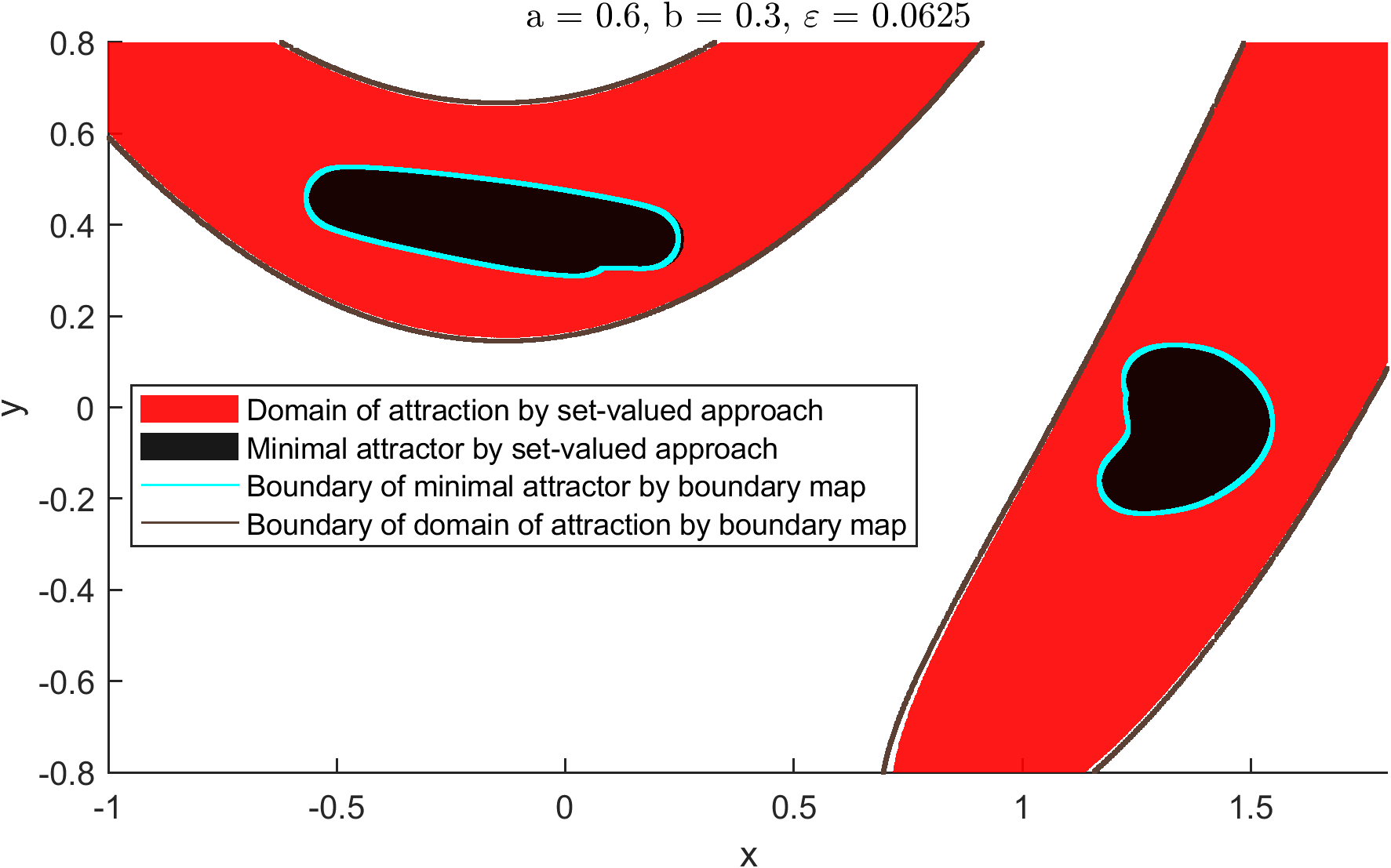}
\put(-8,0){
(a)
}
\end{overpic}
\begin{minipage}{.008\textwidth}
    \text{}
\end{minipage}
\hfill
\begin{overpic}[width=.47\textwidth]{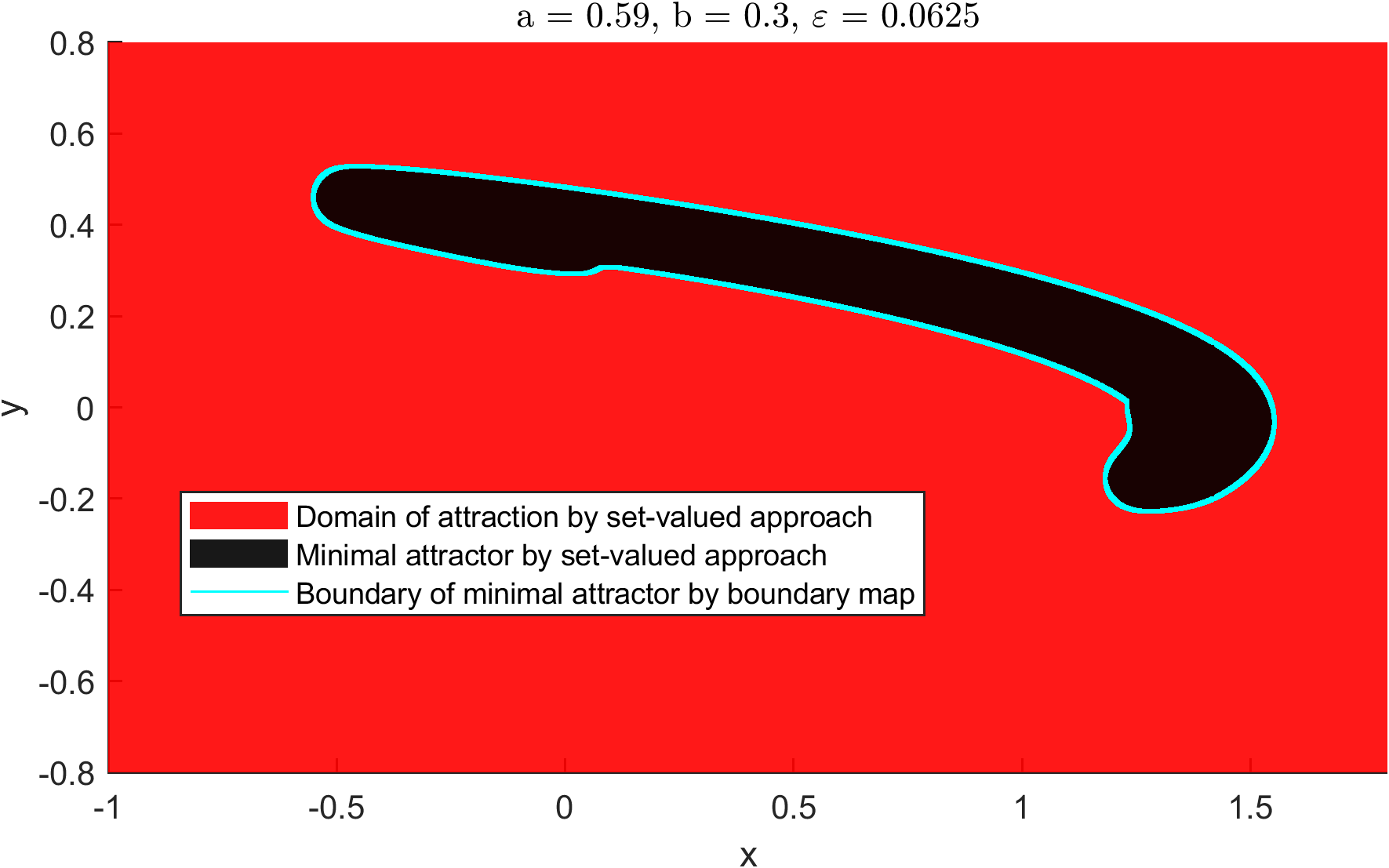}
\put(-5,0){
(b)
}
\end{overpic}
\begin{minipage}{.007\textwidth}
    \text{}
\end{minipage}
\vspace{.3em}

\begin{overpic}[width=.49\textwidth,trim={0 0 0 1},clip]{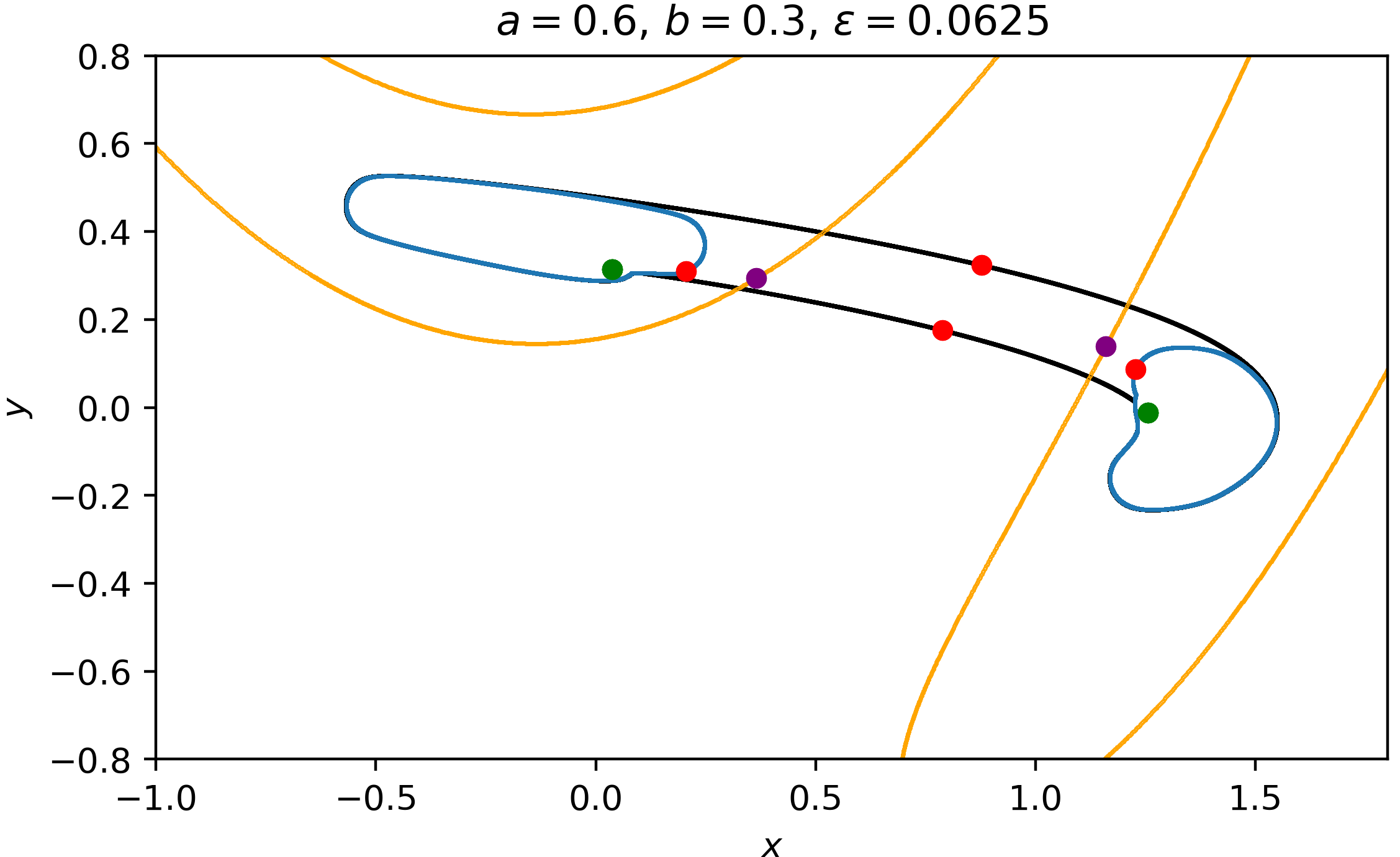}
\put(0,0){
(c)
}
\end{overpic}
\begin{overpic}[width=.49\textwidth,trim={0 0 0 0},clip]{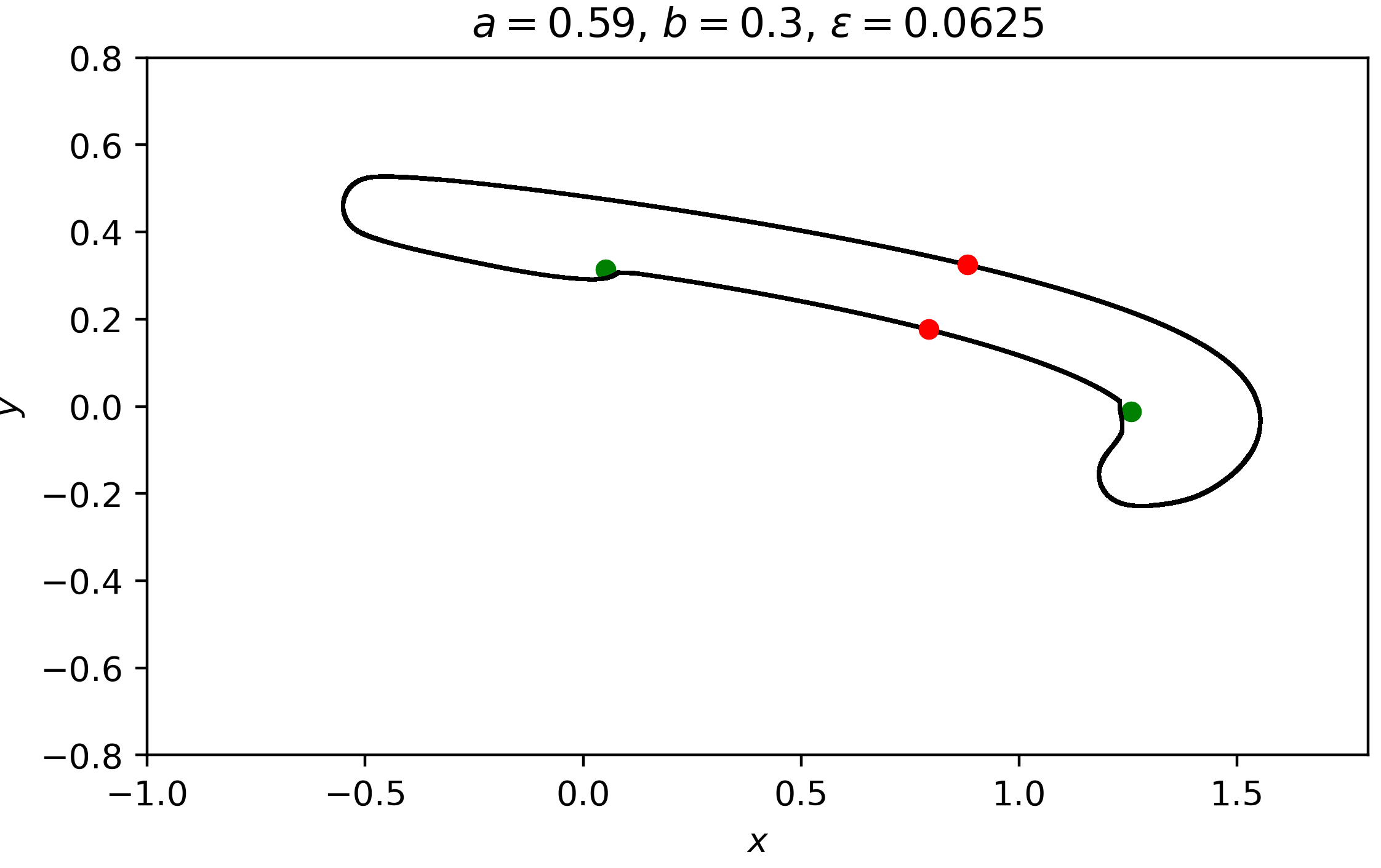}
\put(0,0){
(d)
}
\end{overpic}

    \captionsetup{width=\linewidth}\caption{Numerical approximation of the minimal attractor of the H\'{e}non map with bounded noise (\ref{eq:randomdiff}) and its domain of attraction (using  GAIO) for the parameter values $a=0.6$ in (a), and $a= 0.59$ in (b), with $b = 0.3, \varepsilon = 0.0625$ in both cases. When $a \approx 0.595$, a discontinuous topological bifurcation arises where the minimal attractor consisting of two disjoint components collides with its dual repeller (white region) and bifurcates into a single connected attractor. From the viewpoint of the boundary map, the boundaries of minimal attractors and domain of attraction are (parts of) projections of certain one-dimensional invariant manifolds associated to fixed points and two-periodic points of saddle type (blue in (c) and black in (d)). The topological bifurcation of the set-valued map is linked to a fold bifurcation of the boundary map where two two-periodic orbits of saddle type collide and disappear, together with their associated invariant manifolds. See the main text for a more detailed discussion. 
    }
    \label{fig:a0.6}
\end{figure}

\subsection{Topological bifurcation through a fold bifurcation of the boundary map}\label{sec:top_bif}

We consider a topological bifurcation of minimal attractors of the H\'{e}non map with bounded noise (\ref{eq:randomdiff}), corresponding to a fold bifurcation of two-periodic saddle orbits of the boundary map, as is observed around $a = 0.595$, $b = 0.3$ and $\varepsilon = 0.0625$.

In Figure~\ref{fig:a0.6}, we present numerical approximations of the minimal attractors when (a) $a=0.6$ and (b) $a = 0.59$. The minimal attractor is depicted in black and its domain of attraction in red. When $a=0.6$, the attractor consists of two disjoint components that are permuted by the dynamics, whereas when $a=0.59$, there is one connected attractor. The transition between these different situations arises discontinuously \cite{lamb2015topological}: when decreasing $a$ from $a=0.6$ to $a=0.59$, around $a=0.595$ the two-component attractor explodes into one connected attractor, in a lower semi-continuous manner \cite{lamb2015topological}. The coloured regions reflect the results of (brute-force) computations using the set-oriented numerical toolbox GAIO \cite{dellnitz2001algorithms}, which is well-suited for this task.

The boundaries of attractors and their domains of attraction can also be computed using the boundary map,
as they are formed by (parts of) invariant manifolds of this map.\footnote{Recall that the boundary of the domain of attraction of an attractor is equal to the boundary of its dual repeller (attractor of the dual set-valued map), and the boundary map for the set-valued map and its dual are each other's inverses, cf.~Section~\ref{sec:bm}. }
At $a=0.6$, we find the boundary of the minimal attractor of the set-valued map to be related to a heteroclinic cycle between a two-periodic saddle orbit and an attracting two-periodic orbit of the boundary map: the saddle periodic points have one-dimensional unstable manifolds (in the three-dimensional state space of the boundary map), which connect to the attracting two-periodic points. In Figure~\ref{fig:a0.6}(c) we present a projection of the relevant dynamical objects of the three-dimensional boundary map to the two-dimensional state space. The attracting two-periodic orbit is represented by two green dots and the saddle-periodic orbit by two red dots (nearby). The blue curves are projections of (relevant parts of) the unstable manifold that make up the boundary of the minimal attractor.
The boundary of the domain of attraction is represented in orange and arises from the projection of one-dimensional stable manifolds of another two-periodic saddle orbit of the boundary map, represented by two purple dots.
These correspondences are readily verified from Figures~\ref{fig:a0.6}~(a) and (c). Indeed, we produced Figure~\ref{fig:a0.6}(a) by superimposing the orange and blue curves onto the results of the computations with GAIO, where we find a good match and note that the results from the boundary map are more accurate with far less computational effort.

We now proceed to discuss some other objects included in Figure ~\ref{fig:a0.6}(c) which are relevant for the understanding of the dynamics and the pending topological bifurcation. There are two additional red points, outside of the domain of attraction, which are of saddle type with one-dimensional unstable manifolds. These manifolds also connect to the attracting two-periodic points represented by the green dots.
In fact, the projections of (parts of) these manifolds, represented by black curves partly obscured by the blue curves, form the boundary of an attractor which is not minimal: indeed, it contains the minimal attractor with two disjoint components bounded by the blue curves.

The topological bifurcation arises as, with decreasing $a$, the red saddle two-periodic orbit on the blue curve approaches the purple saddle two-periodic orbit on the orange curve, leading to a fold bifurcation in which these two two-periodic orbits collide and disappear. This coincides, for the set-valued dynamics, to a collision of the minimal attractor with its dual repeller. This destroys the two-component minimal attractor, which involved one of the saddle two-periodic orbits. However, the non-minimal attractor remains to exist and becomes minimal through the disappearance of the other attractor within it. We depict the projection of (parts of) the relevant manifolds contributing to the boundary of the remaining attractor at $a=0.59$ in Figure~\ref{fig:a0.6}(d) in black.  The correspondence with the numerical results for the minimal attractor with GAIO, as presented in Figure~\ref{fig:a0.6}(b), is readily verified. 

We would like to remark that the complexity of the (projection of) relevant unstable manifolds of the boundary map may complicate the understanding of the boundary of the minimal attractor. Indeed,  in Figure~\ref{fig:a0.6}(c) and (d), parts of the projection of the heteroclinic cycle that is deemed irrelevant for the boundary, lying in the interior of the attractor were omitted. In Figure~\ref{fig:06_nocut}, we show the entire projection of the heteroclinic cycle between the two-periodic saddle orbit and the two-periodic stable orbit of the boundary map at $a=0.6$, illustrating the practical complication of deciding which parts of the manifold contribute to the boundary. While this can be resolved satisfactorily in a numerical way, yielding a good approximation of the boundary, the complexity of the underlying (chaotic) dynamics of the boundary map makes it difficult to obtain the precise structure of the boundary. However, a better understanding of the underlying dynamics may elucidate generic qualitative features.

\begin{figure}[!tbp]
    \centering
    \includegraphics[width=0.95\linewidth]{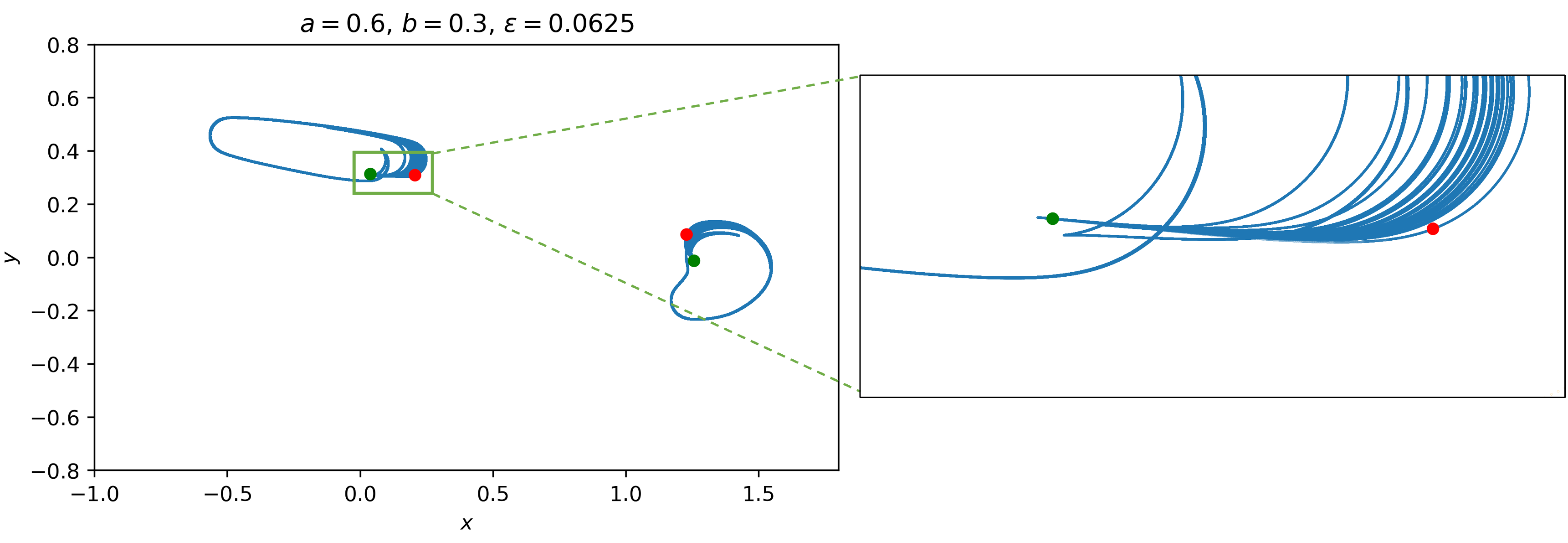}
    \captionsetup{width=\linewidth}\caption{The projection of the heteroclinic cycle between a saddle two-periodic orbit (red dots) and a stable two-periodic orbit (green dots) of the boundary map for the H\'{e}non map. Only part of the projection corresponds to the boundary of the minimal attractor and the remaining projected manifolds lie in the interior of the minimal attractor. The boundary of the minimal attractor is obtained by removing parts of the projected manifolds in the interior, resulting in the blue curves in Figure~\ref{fig:a0.6}(c). 
    We observe four wedge singularities on the boundary, two on each disjoint components, see the magnified inset. We note the complexity of the invariant manifolds, complicating the analysis.}
    \label{fig:06_nocut}
\end{figure}

This bifurcation is best appreciated in movie format, tracing superpositions of GAIO data and relevant dynamical objects from the boundary map as $a$ varies. Such movies for this example are provided in \cite{Teyvideo2024} for $a = 0.607$ to $a=0.003$, and in \cite{Teyvideoslow2024} for $a=0.607$ to $a=0.58$ in a slower motion, focusing on topological bifurcation.

\subsection{Topological bifurcation due to a heteroclinic bifurcation of the boundary map}\label{sec:hetero}

Consider the H\'{e}non map with bounded noise (\ref{eq:randomdiff}) at parameter values $b = 0.3$ and $\varepsilon = 0.6$ and $a=0.36$. This random dynamical system has a unique minimal attractor, and it is displayed together with its domain of attraction in Figure~\ref{fig:set-valued_eps06}(a). When $a$ is increased, the attractor moves towards its dual repeller (cf.~Figure~\ref{fig:set-valued_eps06}(b)), and after it collides, when $a\approx 0.37427117$, the attractor disappears instantaneously. From the set-valued point of view, this scenario is similar to that of the example in Section~\ref{sec:top_bif}, above, cf.~also \cite{lamb2015topological}.

\begin{figure}[!htbp]
    \begin{overpic}[width=0.49\textwidth,trim={0 0 0 0},clip]{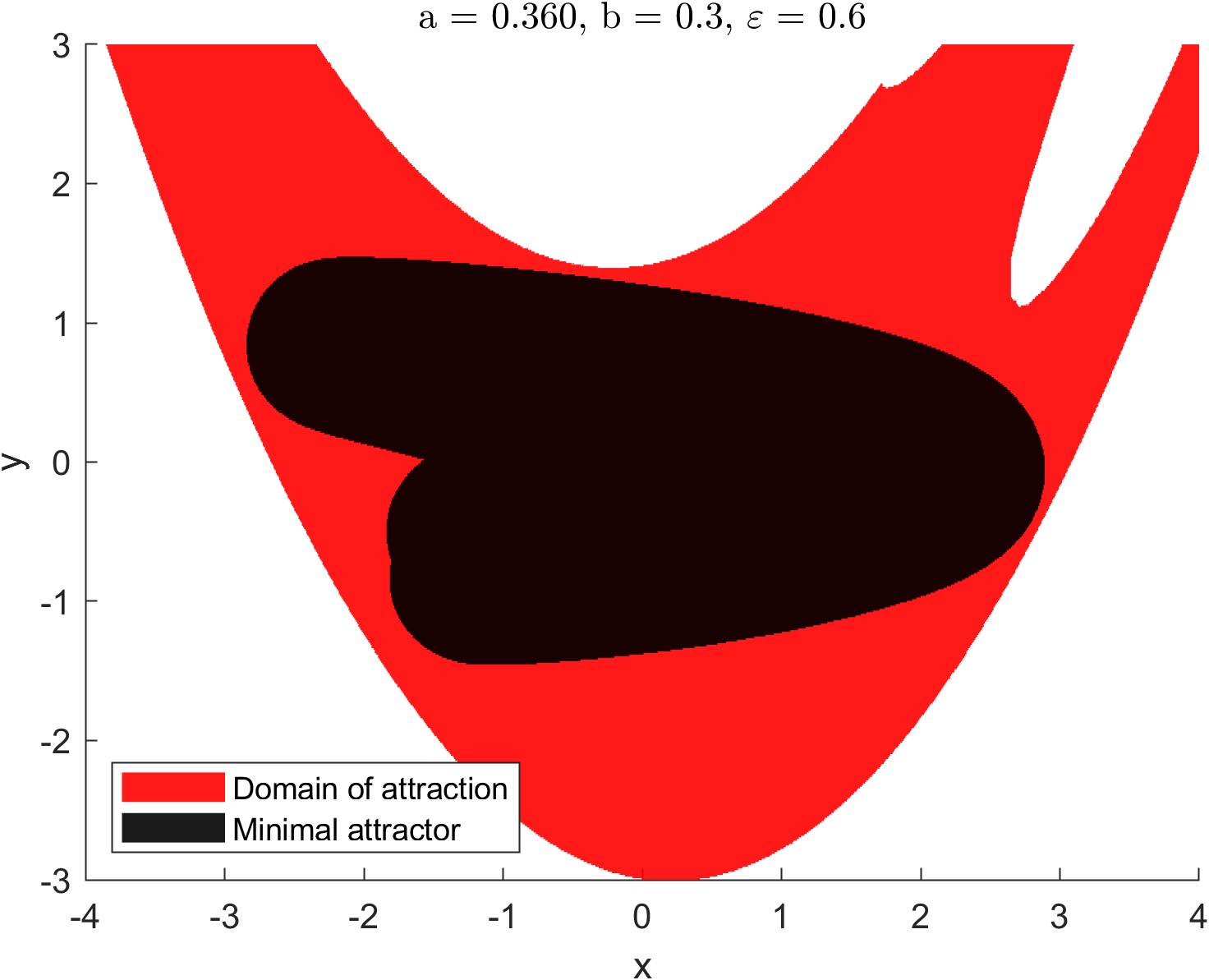}
    
    \put(0,0){
    (a)
    }
    \end{overpic}
    \begin{overpic}[width=.49\textwidth,trim={0 0 0 0},clip]{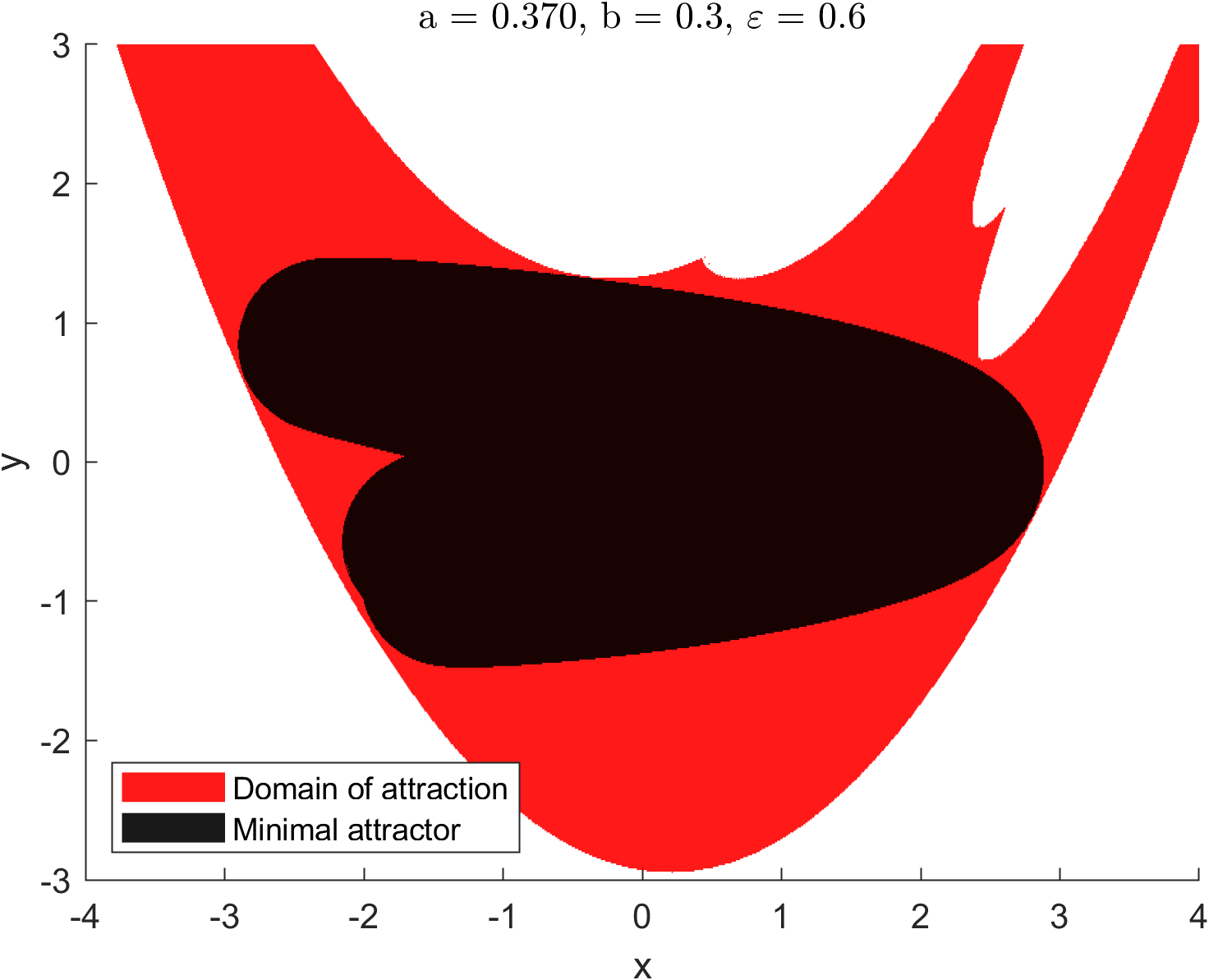}
    
    \put(0,0){
    (b)
    }
    \end{overpic}
    \captionsetup{width=\linewidth}\caption{Minimal attractor of the H\'{e}non map with bounded noise (\ref{eq:randomdiff}) and its domain of attraction, computed using GAIO, at parameter values $b = 0.3$, $\varepsilon = 0.6$, and (a) $a = 0.36$, (b) $a = 0.37$.
    As the parameter values $a$ increases from $a = 0.36$ to $a = 0.37$, the minimal attractor moves towards its dual repeller (white region). If $a$ is further increased, a topological bifurcation occurs at $a \approx 0.37427117$ where the attractor collides with the repeller and the attractor disappears instantaneously.}
    \label{fig:set-valued_eps06}

    \begin{overpic}[width=0.49\textwidth,trim={0 0 0 25},clip]{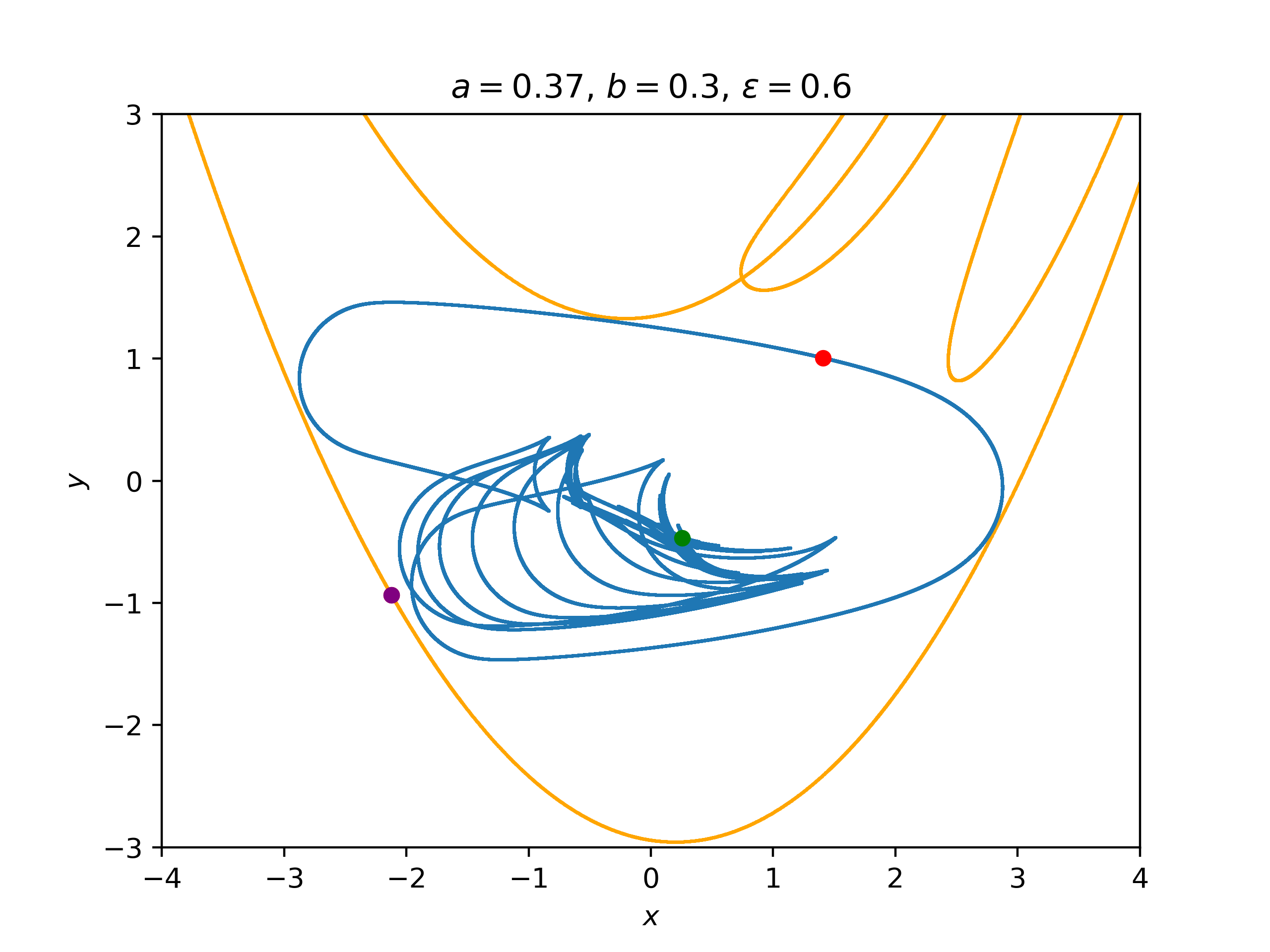}
    
    \put(0,0){
    (a)
    }
    \end{overpic}
    \begin{overpic}[width=.49\textwidth,trim={0 0 0 25},clip]{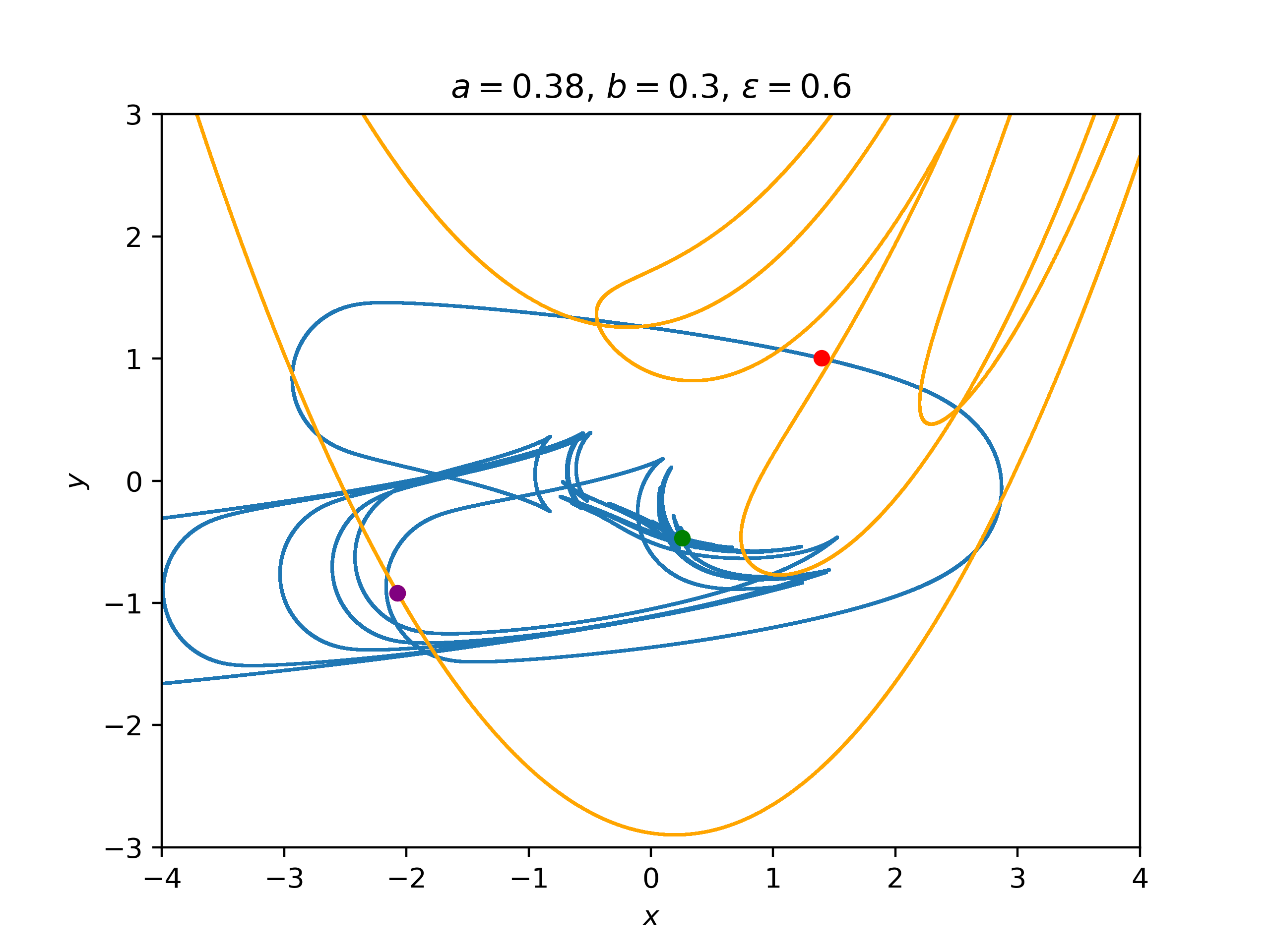}
    
    \put(0,0){
    (b)
    }
    \end{overpic}

    \captionsetup{width=\linewidth}\caption{Relevant projections of parts of invariant manifolds involved in the topological bifurcation of the 
    minimal attractor of the H\'{e}non map with bounded noise (\ref{eq:randomdiff}) at parameter values $a \approx 0.37427117$, $b = 0.3$, and $\varepsilon = 0.6$:
    the unstable manifold (blue curve) of a saddle fixed point (red dot) for the boundary of the attractor and the stable manifold (orange curve) of another saddle fixed point (purple dot) for the boundary of its dual repeller.
    In (a), at $a = 0.37$, we observe a situation close to a heteroclinic connection between the invariant manifolds (the intersection of these manifolds at $a \approx 0.37427117$ in the three-dimensional state space of the boundary map manifests itself as a tangency between the corresponding projections), compare with Figure~\ref{fig:set-valued_eps06}(b). At $a = 0.38$, after the topological bifurcation has taken place and the attractor has exploded, the projection of the unstable manifold (blue) of the saddle fixed point (red) appears to be unbounded in (b).}
    \label{fig:a0.37_0.38}
\end{figure}

However, whereas the topological bifurcation in Section~\ref{sec:top_bif} corresponds to a fold bifurcation of the boundary map, here the boundary map exhibits 
a heteroclinic bifurcation, where a heteroclinic cycle is born.

We illustrate the situation in detail in Figure~\ref{fig:a0.37_0.38} for (a) $a = 0.37$ and (b) $a = 0.38$. We depict the relevant fixed points and parts of their invariant manifolds of the boundary map: a fixed point of saddle type (in red) with a one-dimensional unstable manifold (in blue) and an attracting fixed point (in green, in the interior of the attractor, partially hidden between projection of the unstable manifold). 
The boundary of its domain of attraction (and boundary of its dual repeller) is related to the one-dimensional stable manifold (in orange) of a fixed point of saddle type (in purple) with a two-dimensional unstable manifold.  The correspondence between Figure~\ref{fig:a0.37_0.38}(a) and the boundaries of the minimal attractor and its domain of attraction in Figure~\ref{fig:set-valued_eps06}(b) is readily verified.

\begin{figure}[!b]
    \begin{overpic}[width=.49\textwidth]{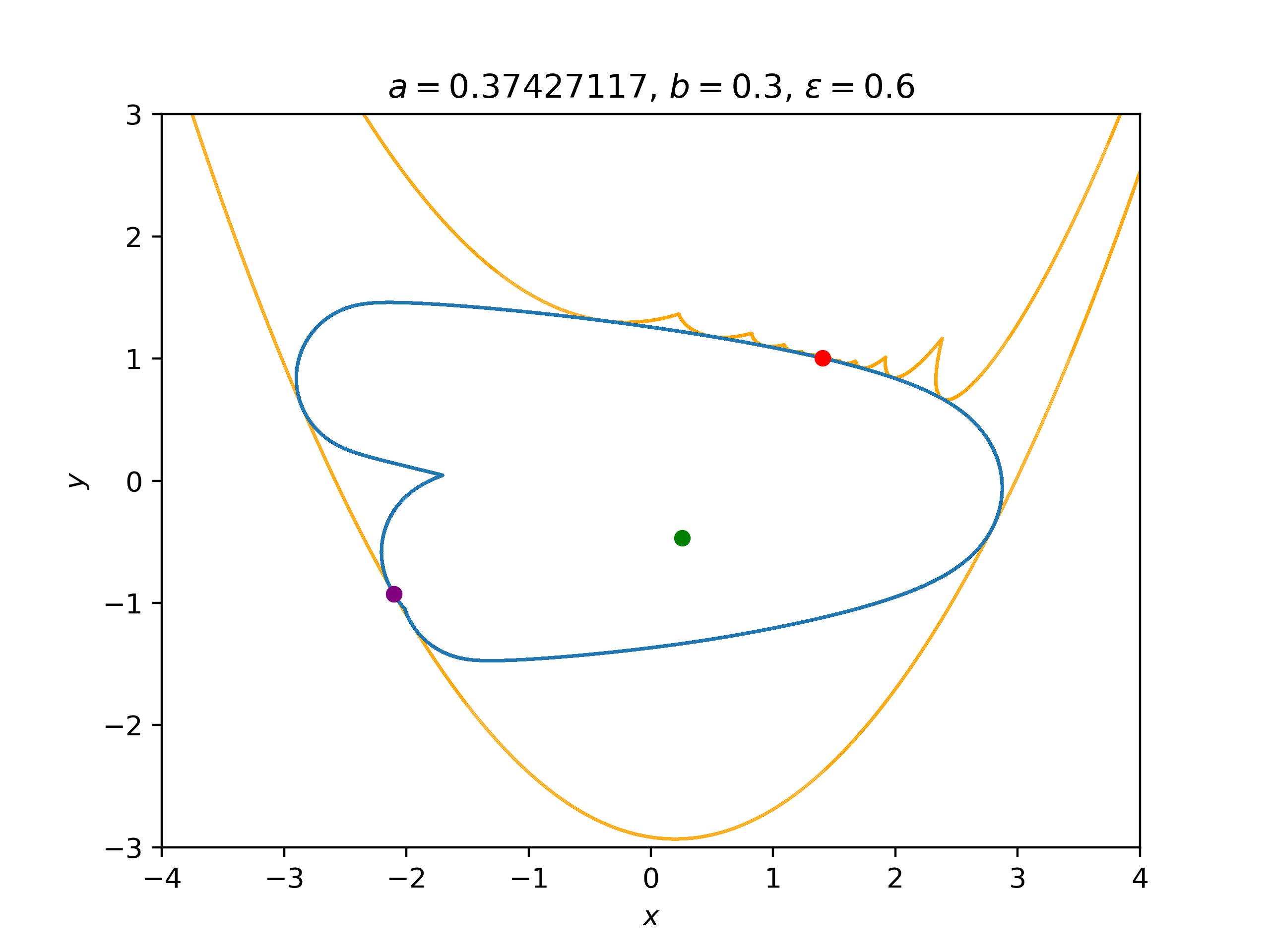}
    
    \put(0,0){
    (a)
    }
    \end{overpic}
    \begin{overpic}[width=.49\textwidth]{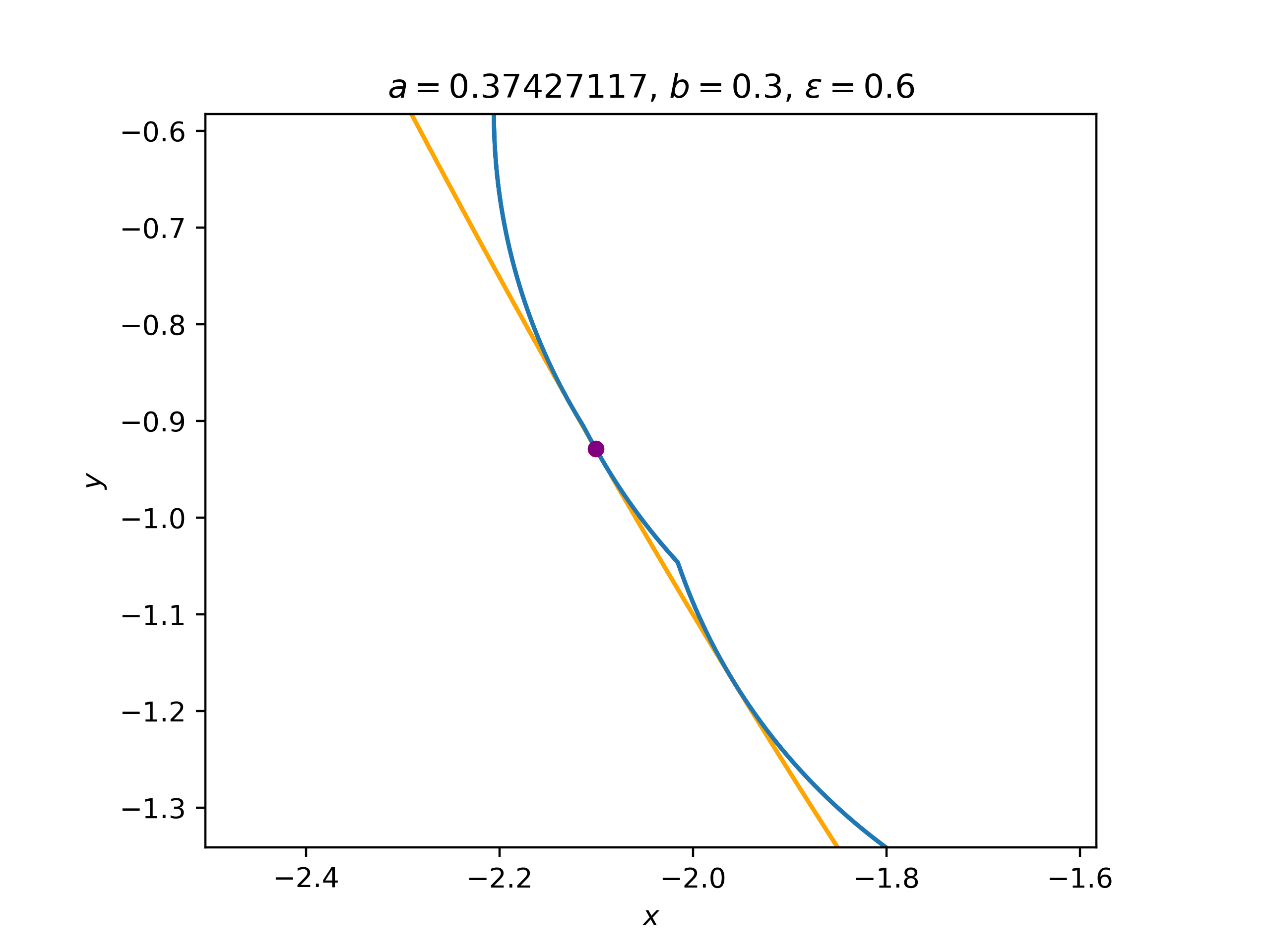}
    
    \put(0,0){
    (b)
    }
    \end{overpic}

    \captionsetup{width=\linewidth}\caption{The boundary of the minimal attractor and its dual repeller represented by (parts of) relevant projections of invariant manifolds at the parameter values: $b=0.3, \varepsilon = 0.6, a = 0.37427117$, on the verge of a topological bifurcation. 
    The projections to the interior of the minimal attractor or its dual repeller are omitted for clarity. A cascade of wedge singularities is found on both sides of the boundary of the dual repeller near the saddle fixed point (red dot in (a)). There is also a cascade of wedge singularities on both sides of the boundary of the minimal attractor near the other saddle fixed point (purple dot in (a) and the magnification (b), where tangencies are visible only on one side of the saddle point due to the small scale at which they arise on the other side). In  \cite{lamb2020boundaries}, boundary points with cascades of wedges on both sides are called \textit{shallow-shallow} singularities.}
    \label{fig:a0.37427117_eps0.6}
\end{figure}

In Figure~\ref{fig:a0.37_0.38}(a), the projection of the unstable manifold (in blue) of the red fixed point appears to be on the verge of intersecting with the projection of the stable manifold (in orange) of the purple fixed point. Indeed, when $a$ is increased further, at $a \approx 0.37427117$ 
the unstable manifold forms a heteroclinic cycle with the stable manifold  (also cf. Figure~\ref{fig:a0.37427117_eps0.6}). The birth of this heteroclinic cycle of the boundary map corresponds to a topological bifurcation of the minimal attractor, where the minimal attractor disappears after the bifurcation. 
The heteroclinic intersections represent collision points between the minimal invariant set and its dual repeller (when the minimal invariant set loses its attractivity). This is a known necessary condition for topological bifurcation~\cite{lamb2015topological}.

We present a continuation of the saddle fixed points and their respective one-dimensional unstable and stable manifolds for $a = 0.38$ in Figure~\ref{fig:a0.37_0.38}(b): 
the heteroclinic cycle between the unstable manifold of the red fixed point and the stable manifold of the purple fixed point is broken, leading to the topological bifurcation. The unstable manifold of the red fixed point appears to be unbounded (towards the left edge of Figure~\ref{fig:a0.37_0.38}(b)). This signifies that the minimal attractor from $a = 0.37$ (in Figure~\ref{fig:a0.37_0.38}(a)) explodes, and has disappeared after the bifurcation.

We now focus on the parameter value $a=0.37427117$, which is very close to (just before) the topological bifurcation. 
In Figure~\ref{fig:a0.37427117_eps0.6} we show the boundary of the minimal attractor and its dual repeller as (parts of) the projection of invariant manifolds of the boundary map, omitting parts of the invariant manifolds that are irrelevant.

We observe a two-sided cascade of wedge singularities on the boundary of the dual repeller (in orange), near the projection of the saddle fixed point (in red).
There is also a two-sided cascade of wedge singularities on the boundary of the minimal attractor 
 (in blue), near the projection of the saddle fixed point (in purple). On the magnification in Figure~\ref{fig:a0.37427117_eps0.6}(b), the heteroclinic tangencies are only visible on one side of the saddle point as they arise on a very small scale on the other side.
This points to the existence of a \emph{shallow-shallow singularity}~\cite{lamb2020boundaries} on both the boundary of the dual repeller and the minimal invariant set, observed only at the point of topological bifurcation (where the invariant set loses its attractivity). 
The genericity of this kind of topological bifurcation, in relation to the underlying geometry, requires further theoretical explanation, which is beyond the scope of this paper.

\subsection{Creation of a shallow singularity (and a cascade of wedge singularities) through non-transversal intersections of the unstable manifold of a saddle fixed point of the boundary map, with the strong stable foliation of a stable periodic point}\label{sec:cascade}

Finally, we explore an instance of a boundary bifurcation with higher complexity. 

\begin{figure}[!htbp]
\centering
    \begin{overpic}[width=.49\textwidth]{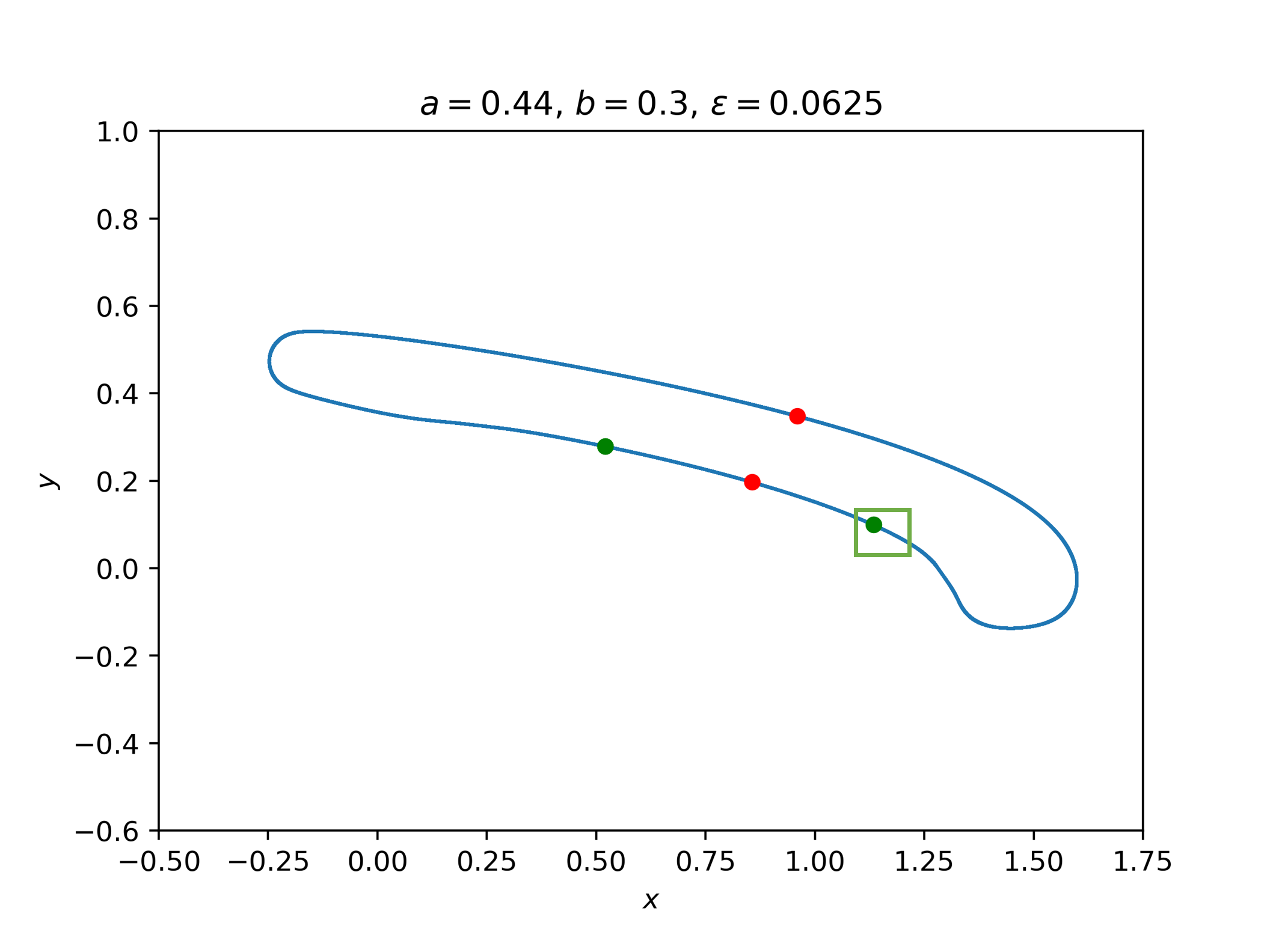}
    
    \put(0,0){
    (a)
    }
    \end{overpic}
    \begin{overpic}[width=.49\textwidth]{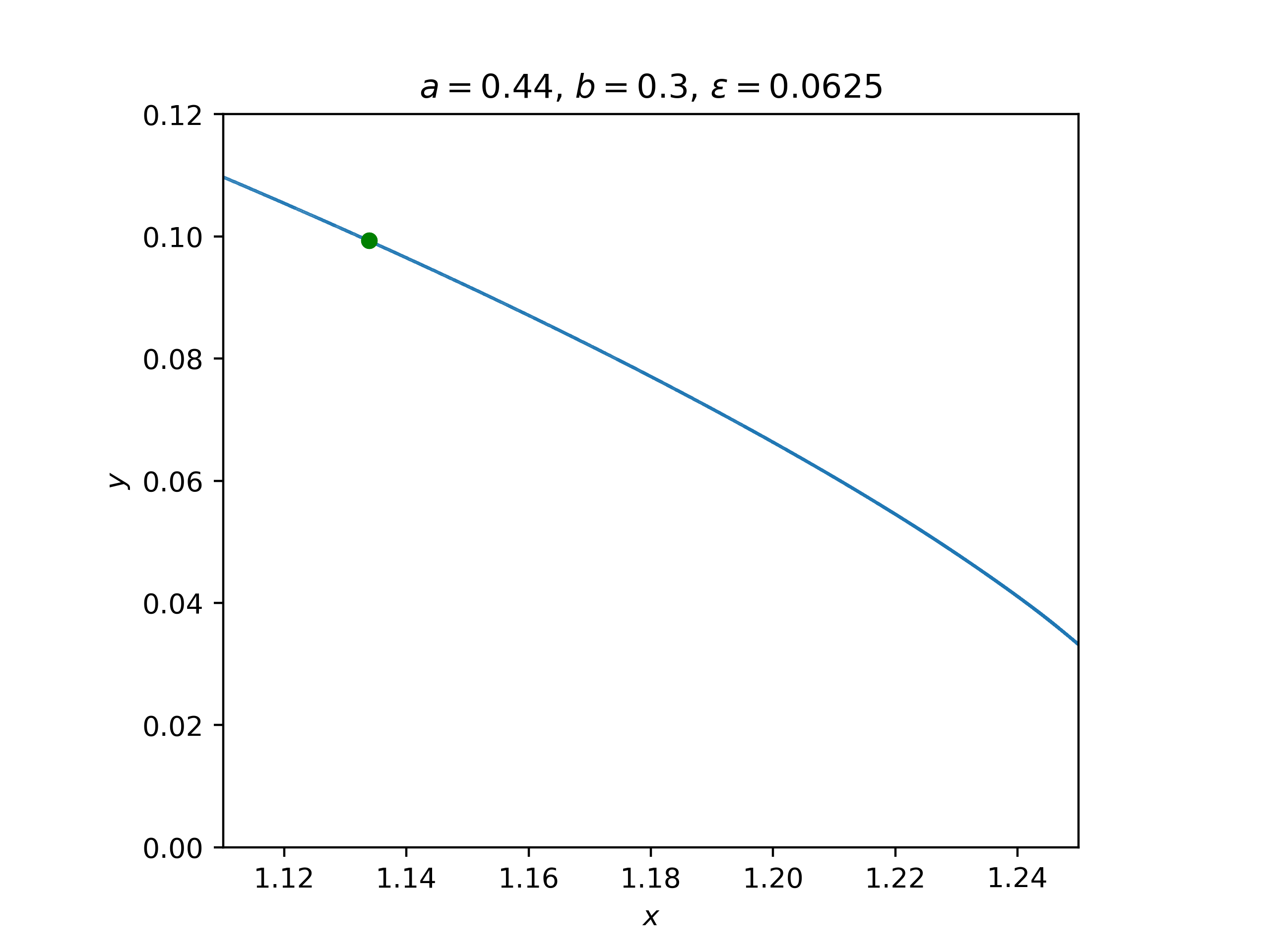}
    
    \put(0,0){
    (b)
    }
    \end{overpic}
\begin{minipage}{.007\textwidth}
    \text{}
\end{minipage}

\vspace{.3em}
    \begin{overpic}[width=.49\textwidth]{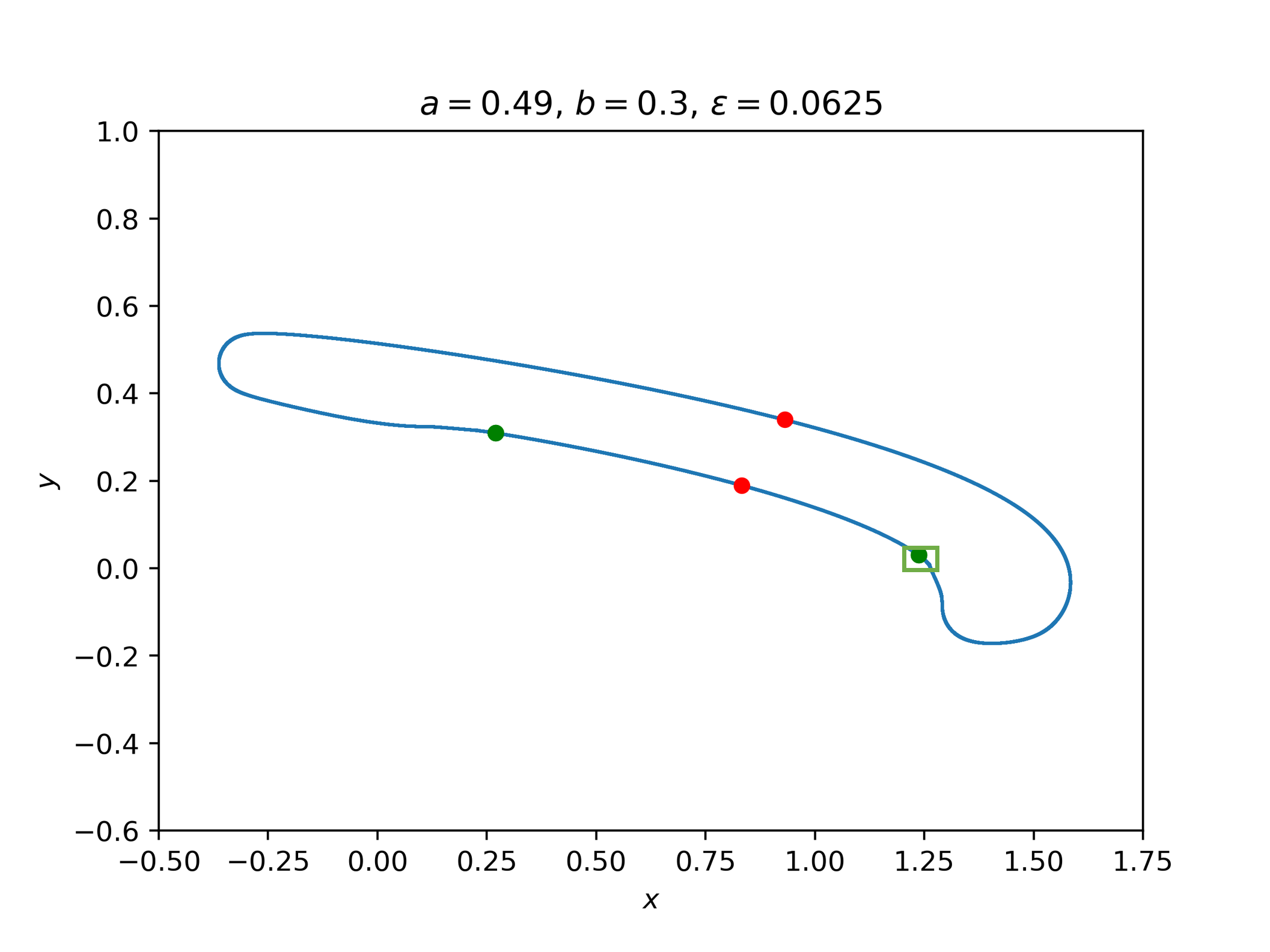}
    
    \put(0,0){
    (c)
    }
    \end{overpic}
    \begin{overpic}[width=.49\textwidth]{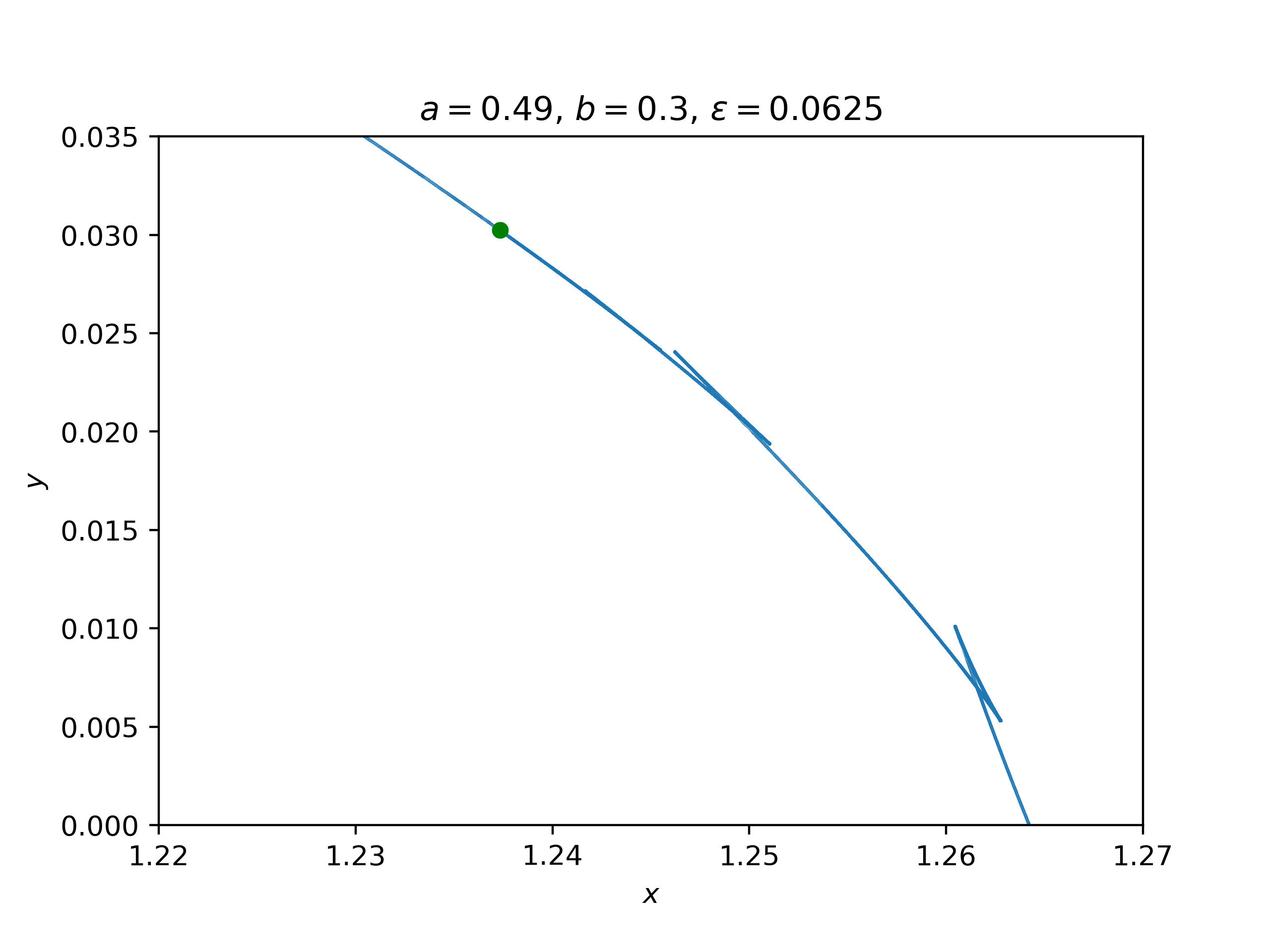}
    
    \put(0,0){
    (d)
    }
    \end{overpic}
    
    \captionsetup{width=\linewidth}\caption{The boundary of the minimal attractor at parameter values $b=0.3$, $\varepsilon=0.0625$ and (a) $a=0.44$, (c) $a=0.49$, represented as (parts of) projections of a heteroclinic cycle (blue curve) between two saddle fixed points (red dots) and a stable two-periodic orbit (green dots). The magnifications of the small boxed regions near one of the stable periodic points are shown in (b) and (d), respectively. 
    At (b) $a = 0.44$, one observes a smooth boundary, whereas at (d) $a = 0.49$ the projection of the heteroclinic cycle self-intersects, leading to wedge singularities on the boundary of the minimal attractor.}
    \label{fig:henon044_049}
\end{figure}

Consider the parameter values $a = 0.44$, $b=0.3$ and $\varepsilon = 0.0625$. We find that the boundary of the minimal attractor is smooth and its normal bundle corresponds to a heteroclinic cycle between two saddle fixed points and a stable two-periodic orbit of the boundary map. Figure~\ref{fig:henon044_049}(a) illustrates the boundary of the minimal attractor as the projection of the unstable manifolds of two saddle fixed points (in red) connecting to a stable two-periodic orbit (in green). The projection of the heteroclinic cycle shows no self-intersections, indicative of smoothness of the boundary of the minimal attractor, see also the magnification in Figure~\ref{fig:henon044_049}(b). 

When the parameter value of $a$ is increased from $a=0.44$ to $a = 0.49$ a boundary bifurcation is observed. The relevant continuation of the heteroclinic cycle in Figure~\ref{fig:henon044_049}(a) is depicted in Figure~\ref{fig:henon044_049}(c). 
Examination near one of the stable periodic points in Figure~\ref{fig:henon044_049}(d), reveals self-intersections on the projection of the unstable manifold, corresponding to singularities on the boundary of the minimal attractor. 

On closer inspection, when zooming in, one observes what seems to be an infinite sequence of increasingly shallow wedge singularities cascading towards the periodic point.

\begin{figure}[!tbp]
\centering
    
    \includegraphics[width=.5\textwidth,trim={0 20 0 20},clip]{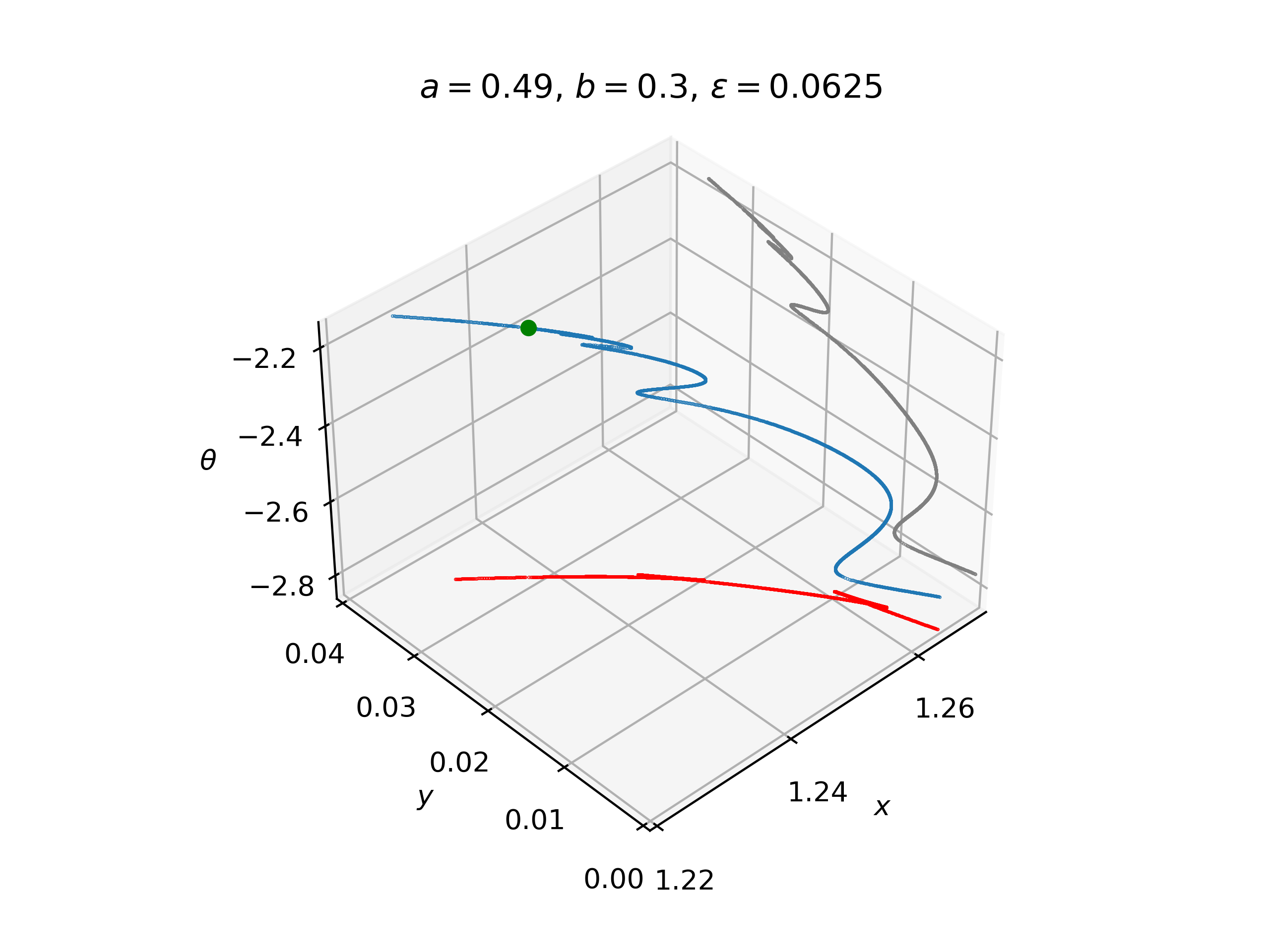}
    
    \captionsetup{width=\linewidth}\caption{The invariant manifold of the boundary map (blue) near to the two-periodic stable point (green), with relevant projection (red)  corresponding to Figure~\ref{fig:henon044_049}(d). A projection to an additional coordinate plane (grey) is included for perspective.}
    \label{fig:henon_049_3d}
\end{figure}

\begin{figure}[!tbp]
    \centering
    \includegraphics[width=\linewidth]{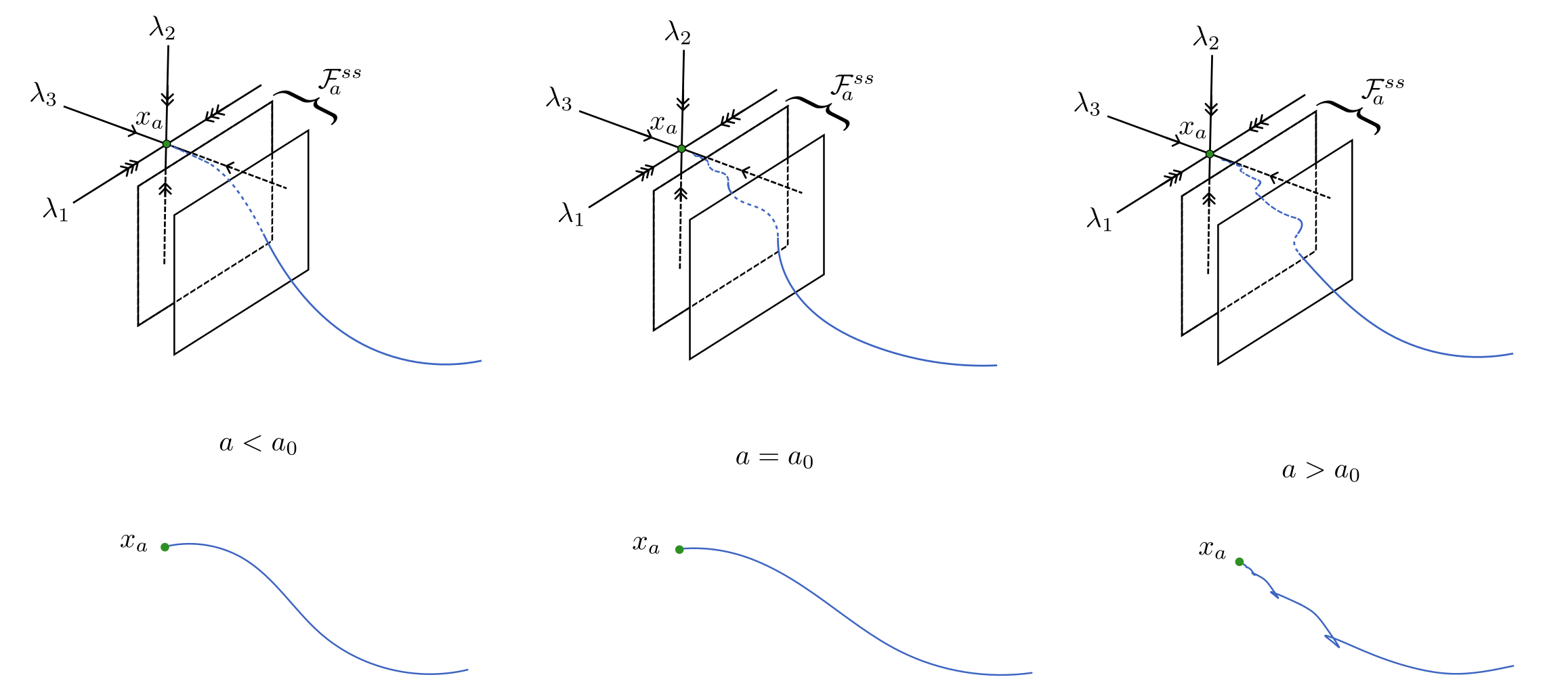}
    \captionsetup{width=\linewidth}\caption{Sketch of the transition from transverse to non-transverse intersection of an incoming one-dimensional unstable manifold of one fixed point of the boundary map (not depicted) and the two-dimensional strong stable foliation $\mathcal{F}^{ss}_a$ of another stable fixed point $x_a$, with $a_0$ denoting the bifurcation point. 
    The bottom figures represent the corresponding relevant projections (as observed in numerical experiments), leading to the conjecture of a cascade of wedge singularities accumulating to $x_a$ if $a>a_0$.}
    \label{fig:cascade_wedge}
\end{figure}

The existence and persistence of such a cascade may be understood from the point of view of the boundary map. In Figure~\ref{fig:henon_049_3d}, we present a visualisation of the unstable manifold (blue) of a fixed point of the boundary map in $\mathbb{R}^2 \times S^1$, the projection of which (red) yields Figure~\ref{fig:henon044_049}(d). 
As the unstable manifold approaches the stable periodic point, it `wiggles'. This wiggling appears to persist arbitrarily close to the periodic point.
Such infinite wiggling indeed arises persistently if the unstable manifold of the fixed point has a non-transversal intersection with the strong stable foliation of the stable two-periodic orbit, 
as illustrated schematically in Figure~\ref{fig:cascade_wedge}. 
It appears that due to the underlying geometry of the boundary map, the relevant projection of each such a wiggle leads to a self-intersection and thus to a wedge singularity on the boundary, leading to an infinite cascade of wedge singularities accumulating to the projection of the periodic point. A formal proof requires a deep appreciation of the underlying geometry, which is beyond the scope of this paper. 
The possibility of such a cascade was already noted in \cite{lamb2020boundaries}, where the accumulation point was coined a \emph{shallow singularity}. Interestingly, the dynamical setting indicates that such accumulation points of singularities can arise persistently, whereas this would not be expected from singularity theoretical considerations of projections of smooth curves. 

Please note that the shallow singularity found here features a one-sided cascade, in contrast to the double-sided cascade at the shallow-shallow singularity encountered in Section~\ref{sec:hetero}.
The latter was not found to be persistent, but only arise at the bifurcation point.

As this boundary bifurcation requires an appreciation of the global positioning of invariant manifolds, it is not easy to identify the precise bifurcation point. This contrasts with the boundary bifurcation discussed in Section~\ref{sec:complex}, where the bifurcation point coincides with a change of the type of eigenvalues of the Jacobian of the boundary map at a relevant fixed point.

\begin{figure}[!b]
    
    \centering\includegraphics[width=.6\textwidth]{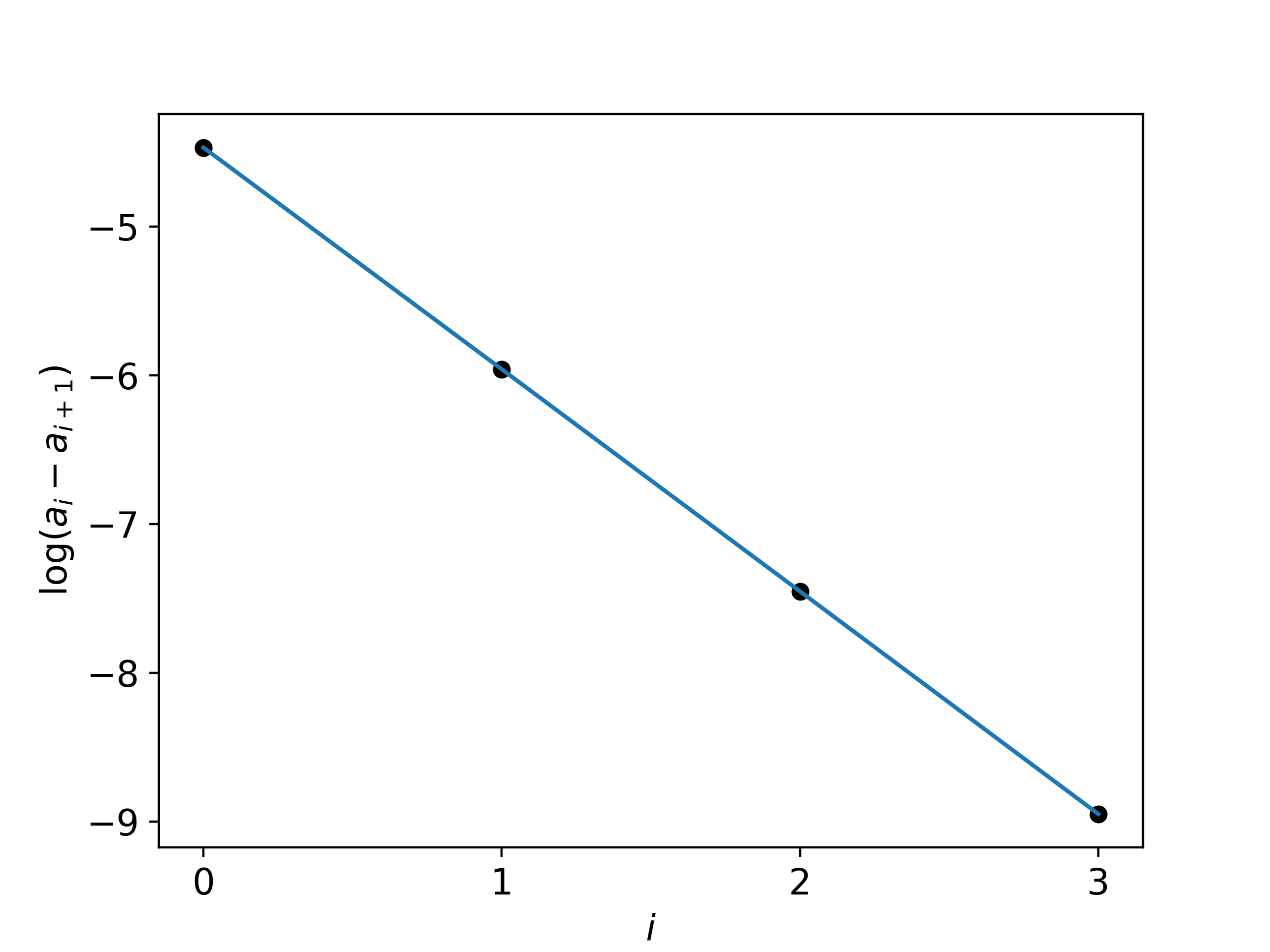}
    
    \captionsetup{width=\linewidth}\caption{Log-distance between consecutive parameter values where the outermost wedge singularity of a cascade of wedge singularities, accumulating to a shallow singularity, disappears. Close to the boundary bifurcation where the shallow singularity vanishes, the graph evidences an exponential decrease, enabling an estimate of the bifurcation parameter in \eqref{eq:bifpoint}.}
    
    \label{fig:param_singularity}
\end{figure}

However, we observe that the disappearance of the shallow singularity is characterised by a scaling relation when the parameter $a$ is gradually decreased towards the bifurcation point from 0.49 to around 0.44. 
We observe the cascade disappears by the mechanics of successive boundary bifurcations where, one by one, the outermost wedge singularity of the cascade, furthest from the shallow singularity, disappears.

From the geometry in the sketch in Figure~\ref{fig:cascade_wedge}, we anticipate an asymptotic scaling of the successive parameter values at which this happens. Indeed, measuring the first five parameter values at which the first wedge of the cascade disappears, yields - as shown in Figure~\ref{fig:param_singularity} - an exponential relationship.
From this relationship, 
\begin{displaymath}
    \log(a_i-a_{i+1})=\log(a_0-a_1)+ci,
\end{displaymath}
for $i \in \mathbb{N}$ with $c\approx -1.48933,\; a_0\approx 0.46964$ and $a_1 \approx 0.45820$, we find that the boundary bifurcation where the shallow singularity is created, occurs at parameter value
\begin{equation}\label{eq:bifpoint}
a_{\infty} = a_1 - \sum_{i=1}^{\infty}e^{\log(a_i-a_{i+1})}
= a_1 - \frac{a_0-a_1}{e^{-c}-1} \approx 0.454869.    
\end{equation}

\section{Outlook}\label{sec:outlook}
In this paper, we have employed the boundary map \eqref{eq:boundary_map} (first introduced in \cite{kourliouros2023persistence}) as a tool to numerically study attractors of random dynamical systems with bounded noise and their bifurcations. The H\'enon map with bounded noise \eqref{eq:randomdiff} was chosen as a prototypical example, for illustrative purposes.

The numerics have laid bare various interesting observations that warrant further theoretical and numerical work.

From the theoretical point of view, the aim is to develop a bifurcation theory, akin to the single-valued setting, that identifies generic features of attractors, and classifies their bifurcations (especially those of low co-dimension, that persist in one- and two-parameter families of systems). The boundary map provides a roadmap towards this objective, through connections between dynamical features of the boundary map and corresponding topological and boundary bifurcations of attractors in the random (set-valued) context. 

As a first step,  \cite{kourliouros2023persistence} addresses the smooth persistence of minimal invariant sets with smooth boundaries, using normal hyperbolicity and relevant insights from contact geometry (that underlies the setting). Extensions of these results are expected to yield a classification of attractors that are persistent in terms of topology and boundary singularity structure. 
Indeed, the present paper contains various conjectures in this direction, supported by numerics and intuitive arguments. 

Another separate challenge lies in the numerical approximation and continuation of invariant sets and attractors. In our experience, the boundary map is more efficient and more accurate than brute-force set-valued methods (like by employing GAIO), especially when studying bifurcations.
The boundary map provides an opportunity to build on existing numerical tools for bifurcations of finite dimensional deterministic dynamical systems, in the absence of any feasible approach from the set-valued point of view.

Finally, we would like to remark that while we presented the boundary map in the context of dynamical systems with bounded noise, a similar 
 set-valued dynamical systems point of view naturally arises in control theory,  uncertainty quantification and front propagation, where the boundary map may also be of use.

\section*{Acknowledgments}
The authors are grateful to Konstantinos Kourliouros, Kalle Timperi and Dmitry Turaev for useful discussions, and Kalle Timperi in particular for his assistance with Figures~\ref{fig:set-valued_illustrate} and \ref{fig:contribute}. JSWL and MR have been supported by the EPSRC grants EP/W009455/1 and  EP/Y020669/1.  JSWL also acknowledges support from the EPSRC Centre for Doctoral Training in Mathematics of Random Systems: Analysis, Modelling and Simulation (EP/S023925/1) and thanks IRCN (Tokyo) and GUST (Kuwait) for their support. JSWL and WHT are grateful for support from JST Moonshot R \& D Grant Number JPMJMS2021, and  WHT was supported also by EPSRC PhD scholarship grant EP/S515085/1 and the Project of Intelligent Mobility Society Design, Social Cooperation Program, UTokyo.

\bibliography{reference}

\newpage

\appendix
\renewcommand{\thesection}{\Alph{section}}

\section{Proof of Proposition~\ref{prop:dual_invariant}}\label{appb}

We consider random dynamical systems consisting of a diffeomorphism $f$ with additive noise of reach $\varepsilon>0$, as in \eqref{eq:randomdiff}, and denote the corresponding set-valued map \eqref{eq:setvaluedmap} and associated boundary map (Definition~\ref{def:boundary_mapping}), as $F_f$ and $\beta_f$, respectively, to contrast these with the corresponding maps for the inverse $f^{-1}$, i.e.~$F_{f^{-1}}$ and $\beta_{f^{-1}}$. 

The following Lemma aids the proof of Proposition~\ref{prop:dual_invariant}.
\begin{lemma}\label{lemma:image_invariance}
    Let $A$ be an $F_f$-invariant set with continuously differentiable boundary $\partial A$. Then, the inner unit normal bundle $N_1^-\partial f(A)$ of $\partial f(A)$ is $\beta_{f^{-1}}$-invariant.
\end{lemma}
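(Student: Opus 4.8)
\textbf{Proof plan for Lemma~\ref{lemma:image_invariance}.}

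The plan is to relate the boundary map $\beta_{f^{-1}}$ acting on the \emph{inner} normal bundle of $\partial f(A)$ to the boundary map $\beta_f$ acting on the \emph{outer} normal bundle of $\partial A$, which is already known to be $\beta_f$-invariant by Proposition~\ref{prop:invariant}. The geometric picture is this: if $A$ is $F_f$-invariant, then $f(A) \supset A \ominus \overline{B_\varepsilon(0)}$ roughly speaking, and more precisely $f(A)$ is a set whose $\varepsilon$-neighbourhood contains $A$; dually, $f^{-1}$ shrinks $f(A)$ back and the set-valued map $F_{f^{-1}}$ expands it. I would first establish the key algebraic observation: if $n$ is the outer unit normal to $\partial A$ at a point $x \in \partial A$, then the corresponding boundary point of $\partial f(A)$ sharing the \emph{same} normal direction is not $f(x)$ directly but rather the point obtained by tracking how $f$ transforms normals, namely the direction $\nu := \frac{(f'(x)^T)^{-1} n}{\|(f'(x)^T)^{-1}n\|}$ is the outer normal to $\partial f(A)$ at $f(x)$; equivalently, $-\nu$ is the inner normal there. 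This is precisely the content of the second component of $\beta_f$ as explained in the discussion following Definition~\ref{def:boundary_mapping}.

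Next I would set up the correspondence explicitly. Take $(w, m) \in N_1^-\partial f(A)$, so $w \in \partial f(A)$ and $m$ is the inner unit normal there. Then $-m$ is the outer normal, and writing $x := f^{-1}(w) \in \partial A$ (using that $f$ is a diffeomorphism so $\partial f(A) = f(\partial A)$), the outer normal to $\partial A$ at $x$ is $n$ where $-m = \frac{(f'(x)^T)^{-1}n}{\|(f'(x)^T)^{-1}n\|}$, i.e.~$n$ is (a positive multiple of) $f'(x)^T(-m) = -f'(x)^T m$. Now I compute $\beta_{f^{-1}}(w,m)$ from \eqref{eq:boundary_map} applied to the diffeomorphism $g := f^{-1}$: its second component is $\frac{(g'(w)^T)^{-1}m}{\|(g'(w)^T)^{-1}m\|}$, and since $g'(w) = (f'(x))^{-1}$ we get $(g'(w)^T)^{-1} = f'(x)^T$, so this direction is exactly $\frac{f'(x)^T m}{\|f'(x)^T m\|} = -n/\|\cdot\|$, which is the inner normal direction to $\partial A$ at $x$. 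The first component of $\beta_{f^{-1}}(w,m)$ is $g(w) + \varepsilon \cdot (\text{that direction}) = x + \varepsilon \cdot(\text{inner normal to }\partial A\text{ at }x)$.

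The final step is to identify this image point as lying in $N_1^-\partial f(A)$ again. Here I would invoke $F_f$-invariance of $A$: since $F_f(A) = A$, every boundary point $x \in \partial A$ is of the form $x = f(a) + \varepsilon n_a$ for some $a \in \partial A$ with outer normal $n_a$ (this is exactly the statement that $N_1^+\partial A$ is $\beta_f$-invariant, so it is the image of some boundary point under $\beta_f$), and the point $x - \varepsilon n_x$ — where $n_x$ is the outer normal at $x$, so $-n_x$ the inner normal — equals $f(a)$, which lies in $\partial f(A)$ and has outer normal $n_x$ there. Translating: $x + \varepsilon(\text{inner normal at }x) = x - \varepsilon n_x = f(a) \in \partial f(A)$, with inner normal $-n_x$; and one checks the second component computed above matches $-n_x$. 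Thus $\beta_{f^{-1}}(w,m) \in N_1^-\partial f(A)$, proving invariance.

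\textbf{Main obstacle.} The routine part is the linear-algebra bookkeeping with transposes, inverses, and sign flips between inner and outer normals; care is needed but it is mechanical. The genuine difficulty is the last step: justifying rigorously that the first component lands back on $\partial f(A)$ with the correct normal. This requires more than the pointwise normal computation — it uses $F_f$-invariance $F_f(A) = A$ in an essential way, together with the fact (from Proposition~\ref{prop:invariant} and the discussion of contributors) that under the smoothness hypothesis on $\partial A$ the boundary $\partial f(A)$ is ``$\varepsilon$-inside'' $\partial A$ along matching normals with a clean one-to-one correspondence. One must be somewhat careful that no singularities obstruct this correspondence; invoking that $A$ has continuously differentiable boundary and that $A = F_f(A) = \overline{B_\varepsilon(f(A))}$ forces $\partial f(A)$ to be the ``inner parallel body'' of $\partial A$ at distance $\varepsilon$ along normals, which is where the identification is completed.
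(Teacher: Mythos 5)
Your argument is in substance the same as the paper's: both rest on Proposition~\ref{prop:invariant} (the equality $\beta_f(N_1^+\partial A)=N_1^+\partial A$), the identity $\bigl(((f^{-1})'(w))^T\bigr)^{-1}=f'(f^{-1}(w))^T$, and the sign flip between inner and outer normals. The paper merely packages the computation through the factorisation $\beta_f=g_\varepsilon\circ h_f$ with $g_\varepsilon(x,n)=(x+\varepsilon n,n)$ and $h_f(x,n)=\bigl(f(x),(f'(x)^T)^{-1}n/\|(f'(x)^T)^{-1}n\|\bigr)$, which makes the invariance relations for $N_1^+f(\partial A)$ automatic, whereas you unwind the same identities pointwise. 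Your closing step is sound as written: surjectivity of $\beta_f$ on $N_1^+\partial A$ gives $(x,n)=\beta_f(a,n_a)$, hence $x-\varepsilon n=f(a)\in\partial f(A)$ with outer normal $n$ there, so the ``genuine difficulty'' you flag at the end is already dispatched by your own second paragraph; no appeal to contributors or inner parallel bodies is needed beyond Proposition~\ref{prop:invariant} itself.

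The one real omission is that you only establish the inclusion $\beta_{f^{-1}}(N_1^-\partial f(A))\subseteq N_1^-\partial f(A)$. Invariance as used in the lemma (and as needed in the deduction of Proposition~\ref{prop:dual_invariant}) means equality, and forward invariance of a compact set under a diffeomorphism does not by itself give equality (a strict contraction maps a disc properly into itself). The paper closes this by running the mirror computation for $\beta_{f^{-1}}^{-1}=h_f\circ g_\varepsilon^{-1}$, showing $\beta_{f^{-1}}^{-1}(N_1^-\partial f(A))\subseteq N_1^-\partial f(A)$ as well. In your notation this amounts to checking that every $(f(a),-\nu)\in N_1^-\partial f(A)$ is hit: take $(x,n):=\beta_f(a,n_a)=(f(a)+\varepsilon\nu,\nu)\in N_1^+\partial A$ and verify that $(w,m):=\bigl(f(x),-(\text{outer normal to }\partial f(A)\text{ at }f(x))\bigr)$ maps onto $(f(a),-\nu)$ under $\beta_{f^{-1}}$; this is your computation read backwards and should be added to complete the proof.
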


\begin{proof}
    From Proposition~\ref{prop:invariant}, the outward unit normal bundle is $\beta_f$-invariant, i.e. 
    
    \noindent $\beta_f (N_1^+\partial A) = N_1^+\partial A$. The boundary map $\beta_f$ can be expressed as the composition $\beta_f = g_{\varepsilon}\circ h_{f}$ where
    \begin{equation*}
        h_f(x,n) = \left(f(x),\frac{(f'(x)^T)^{-1}n}{||(f'(x)^T)^{-1}n||}\right), \quad \mbox{and}\quad g_{\varepsilon}(x,n) = (x+\varepsilon n, n).
    \end{equation*}
    Then, $h_f(N_1^+\partial A) = N_1^+f(\partial A)$ and $N_1^+\partial A = g_{\varepsilon}(N_1^+f(\partial A))$. Hence, 
    \begin{align}\label{eq:proof1}
        N_1^+f(\partial A) &= h_f(g_{\varepsilon}(N_1^+f(\partial A))) 
    \end{align}
    and, equivalently,
    \begin{align}\label{eq:proof2}
         N_1^+f(\partial A) &= g^{-1}_{\varepsilon}(h_f^{-1}(N_1^+f(\partial A))). 
    \end{align}

From (\ref{eq:proof2}) and $h_{f^{-1}}(x,n) =h_f^{-1}(x,n)$, we find for all $(x,-n) \in N_1^+f(\partial A)$ (i.e.~$(x,n) \in N_1^-f(\partial A)$),
    \begin{align*}
        g^{-1}_{\varepsilon}(h_f^{-1}(x,-n)) &=g^{-1}_{\varepsilon}\left(f^{-1}(x),\frac{((f^{-1})'(x)^T)^{-1}(-n)}{\|((f^{-1})'(x)^T)^{-1}(-n)\|}\right)\\
        &=\left(f^{-1}(x)+\varepsilon \frac{((f^{-1})'(x)^T)^{-1}n}{\|((f^{-1})'(x)^T)^{-1}n\|},-\frac{((f^{-1})'(x)^T)^{-1}n}{\|((f^{-1})'(x)^T)^{-1}n\|}\right) \in N_1^+f(\partial A),
    \end{align*}
    so that 
    \begin{align*}
        g_{\varepsilon}(h_{f^{-1}}(x,n)) &= \left(f^{-1}(x)+\varepsilon \frac{((f^{-1})'(x)^T)^{-1}n}{\|((f^{-1})'(x)^T)^{-1}n\|},\frac{((f^{-1})'(x)^T)^{-1}n}{\|((f^{-1})'(x)^T)^{-1}n\|}\right) \in N_1^-f(\partial A).
    \end{align*}
    Hence, $\beta_{f^{-1}}(N_1^-f(\partial A)) = g_{\varepsilon}\left(h_{f^{-1}}\left(N_1^-f(\partial A)\right)\right) \subset N_1^-f(\partial A)$. 
    
    Using similar arguments and (\ref{eq:proof1}), we obtain for all $(x,n) \in N_1^-f(\partial A)$
    \begin{align*}
        h_f(g_{\varepsilon}(x,-n)) &= \left(f(x-\varepsilon n),-\frac{(f'(x-\varepsilon n)^T)^{-1}n}{\|(f'(x-\varepsilon n)^T)^{-1}n\|}\right) \in N_1^+f(\partial A),
        \end{align*}
       so that 
        \begin{align*}       
        h_{f^{-1}}^{-1}(g_{\varepsilon}^{-1}(x,n)) &= \left(f(x-\varepsilon n),\frac{(f'(x-\varepsilon n)^T)^{-1}n}{\|(f'(x-\varepsilon n)^T)^{-1}n\|}\right) \in N_1^-f(\partial A).
    \end{align*}
    Therefore, $\beta^{-1}_{f^{-1}}(N_1^-f(\partial A)) \subset N_1^-f(\partial A)$ and hence we have $\beta_{f^{-1}}(N_1^-f(\partial A)) = N_1^-f(\partial A)$ as required.
\end{proof}

 The proof of Proposition~\ref{prop:dual_invariant} follows from the observation that for any invariant set $A^*$ of the dual $F_f^*$, the set $\overline{B_{\varepsilon}(A^*)}$ is invariant under $F_{f^{-1}}$: $F_{f^{-1}}(\overline{B_{\varepsilon}(A^*)}) = F_f^*(A^*) = A^*$. Namely, by Lemma \ref{lemma:image_invariance}, this implies that $N_1^-\partial f^{-1}(\overline{B_{\varepsilon}(A^*)}) = N_1^-\partial A^*$ is invariant under $\beta_{(f^{-1})^{-1}} = \beta_f$.

\section{Properties of eigenvalues and eigenvectors of the derivative at fixed points of the boundary map}\label{sec:eigen}

\begin{proposition}\label{prop:eigenrelation}
Let $\beta$ be the boundary map \eqref{eq:boundary_map} associated with \eqref{eq:randomdiff}, $(x_*,n_*)$ be a fixed point of $\beta$ with derivative $D\beta(x_*,n_*):T_{x_*}\mathbb{R}^2\times T_{n_*}S^1\to T_{x_*}\mathbb{R}^2\times T_{n_*}S^1$, and $\pi:T_{x_*}\mathbb{R}^2\times T_{n_*}S^1\to T_{x_*}\mathbb{R}^2$ denote the canonical projection.
Then $\lambda_1 := \|(f'(x_*)^T)^{-1}n_*\|^{-1}\neq0$ is an eigenvalue of $D\beta(x_*,n_*)$.

 In addition, assume that  $\lambda_1$ is the only real eigenvalue 
 of $D\beta(x_*,n_*)$, 

and let $V$  denote the two-dimensional $D\beta(x_*,n_*)$-invariant subspace transverse to the eigenspace associated with $\lambda_1$. Then, $\dim(\pi(V))=1$ and $\lambda_1 = \det(D\beta(x_*,n_*)|_V)$. 
\end{proposition}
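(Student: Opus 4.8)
The plan is to compute the derivative $D\beta(x_*,n_*)$ explicitly using the decomposition $\beta=g_\varepsilon\circ h_f$ from Appendix~\ref{appb}, and then extract the eigen-information. First I would note that since $g_\varepsilon(x,n)=(x+\varepsilon n,n)$ has derivative $\begin{pmatrix}I & \varepsilon I\\ 0 & I\end{pmatrix}$ (restricted appropriately to $T_{n_*}S^1$ in the second slot), it is unipotent, so $\det Dg_\varepsilon$ acts as the identity on any relevant subspace and in particular $\det(D\beta(x_*,n_*)|_W) = \det(Dh_f(x_*,n_*)|_{W'})$ for matching invariant subspaces. Thus the spectral content of $D\beta$ is governed by $Dh_f$, where $h_f(x,n)=(f(x), g(x,n))$ with $g(x,n) = (f'(x)^T)^{-1}n / \|(f'(x)^T)^{-1}n\|$. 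The key structural observation is that $h_f$ is a skew product over $f$: the $x$-component evolves autonomously by $f$, so $Dh_f(x_*,n_*)$ is block lower-triangular,
\begin{equation*}
Dh_f(x_*,n_*) = \begin{pmatrix} f'(x_*) & 0 \\ \partial_x g & \partial_n g \end{pmatrix},
\end{equation*}
and hence its spectrum is $\mathrm{spec}(f'(x_*)) \cup \{\partial_n g|_{T_{n_*}S^1}\}$. Since $\partial_n g$ is the derivative of the induced (projective-type) map $n\mapsto (f'(x_*)^T)^{-1}n/\|\cdots\|$ on $S^1$ at its fixed point $n_*$, a direct computation (differentiating the normalisation) gives that this one-dimensional derivative equals $\|(f'(x_*)^T)^{-1}n_*\|^{-2}\det((f'(x_*)^T)^{-1}) = \|(f'(x_*)^T)^{-1}n_*\|^{-2}/\det f'(x_*)$; I would reconcile this with the claimed value $\lambda_1 = \|(f'(x_*)^T)^{-1}n_*\|^{-1}$ using the fixed-point relation (the $x$-component of $\beta$ being fixed forces $f(x_*)+\varepsilon n_* = x_*$, but more relevantly $n_*$ being fixed under the normalised pullback pins down $\|(f'(x_*)^T)^{-1}n_*\|$ in terms of $\det f'(x_*)$, since $(f'(x_*)^T)^{-1}n_* = \|(f'(x_*)^T)^{-1}n_*\|\,n_*$ means $n_*$ is an eigenvector of $(f'(x_*)^T)^{-1}$ with eigenvalue $\|(f'(x_*)^T)^{-1}n_*\|$, whence the complementary eigenvalue of $(f'(x_*)^T)^{-1}$ is $\|(f'(x_*)^T)^{-1}n_*\|^{-1}\det f'(x_*)^{-1}$, and the $S^1$-derivative simplifies accordingly). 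This establishes the first claim: $\lambda_1$ is an eigenvalue, realised on the "fibre" direction $T_{n_*}S^1$ pulled through $g_\varepsilon$.

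For the second part, suppose $\lambda_1$ is the only real eigenvalue, so $f'(x_*)$ has two complex-conjugate eigenvalues $\lambda_2,\lambda_3$ with $\lambda_2\lambda_3 = \det f'(x_*)$, and these are exactly the non-$\lambda_1$ eigenvalues of $D\beta(x_*,n_*)$. Let $V$ be the unique two-dimensional $D\beta$-invariant subspace complementary to the $\lambda_1$-eigenline (uniqueness follows because the three eigenvalues are distinct — $\lambda_1$ real, $\lambda_{2,3}$ non-real). I would identify $\pi(V)$ by tracking how $V$ sits relative to the block structure: because $h_f$ is a skew product over $f$, the $T_{x_*}\mathbb{R}^2$-factor is $Dh_f$-invariant and carries precisely the eigenvalues $\lambda_2,\lambda_3$; pulling back through $Dg_\varepsilon$ (which is the identity in the base direction and shears the fibre into the base), the image $\pi(V)$ is the full base $T_{x_*}\mathbb{R}^2$ — wait, that would give $\dim \pi(V)=2$, so I need to be more careful. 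The correct reasoning: $V$ is $D\beta$-invariant, and $\pi$ intertwines $D\beta$ with... actually $\pi$ does not intertwine anything directly, but the $x$-dynamics of $\beta$ is $x\mapsto f(x)+\varepsilon(\text{normalised pullback})$, which is genuinely coupled. Instead I would argue: $\pi(V)$ is a subspace of $T_{x_*}\mathbb{R}^2$ of dimension $1$ or $2$; if it were $2$-dimensional then $V$ would be a graph over $T_{x_*}\mathbb{R}^2$, forcing $D\beta|_V$ to be conjugate to the $x$-linearisation restricted to a complement, which has eigenvalues $\{\lambda_2,\lambda_3\}$ — consistent — but then the $\lambda_1$-eigenvector would have to lie in $V$ or project nontrivially, contradicting $\pi$ of the $\lambda_1$-eigenline being (generically) nonzero and the direct sum decomposition. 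The cleanest route is to exhibit explicitly the $\lambda_1$-eigenvector $e_1$ of $D\beta$ and show $\pi(e_1)\neq 0$; since $T_{x_*}\mathbb{R}^2 = \pi(e_1)\oplus \pi(V)$ forces $\dim\pi(V)\le 1$, while $\pi(V)\neq 0$ because $V$ cannot be contained in the fibre $\{0\}\times T_{n_*}S^1$ (that space is one-dimensional, not two), giving $\dim\pi(V)=1$ exactly.

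Finally, for $\lambda_1 = \det(D\beta(x_*,n_*)|_V)$: since $D\beta|_V$ has eigenvalues $\lambda_2,\lambda_3$ (the two non-$\lambda_1$ eigenvalues, which by hypothesis are complex conjugates, hence $V$ is exactly their invariant plane), $\det(D\beta|_V) = \lambda_2\lambda_3 = \det f'(x_*)$. It then remains to check $\lambda_1 = \|(f'(x_*)^T)^{-1}n_*\|^{-1} = \det f'(x_*)$, which follows from the fixed-point relation for $n_*$: as noted above, $n_*$ is an eigenvector of $(f'(x_*)^T)^{-1}$ with eigenvalue $\mu := \|(f'(x_*)^T)^{-1}n_*\|$; since $f'(x_*)$ and $(f'(x_*)^T)^{-1}$ have eigenvalue sets related by $\lambda \mapsto \lambda^{-1}$ (conjugate-transpose does not change eigenvalues, inverse reciprocates them), and $f'(x_*)$ is assumed to have only the complex pair $\lambda_2,\bar\lambda_2$, there is no real eigenvector of $f'(x_*)$ — contradiction, unless $\mu$ corresponds not to the base block. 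I would therefore reorganise: the eigenvalue $\lambda_1$ of $D\beta$ is \emph{not} an eigenvalue of $f'(x_*)$; rather it is the fibre eigenvalue $\partial_n g = \mu^{-2}/\det f'(x_*)$, and the fixed-point condition $(f'(x_*)^T)^{-1}n_* = \mu n_*$ is a genuine constraint (which can hold if we do not additionally assume $f'(x_*)$ has only the complex pair — the hypothesis "$\lambda_1$ only real eigenvalue of $D\beta$" concerns $D\beta$, and $\lambda_1$ being real is compatible with $\mu$ real). With $(f'(x_*)^T)^{-1}n_*=\mu n_*$ real-eigenvector data, the \emph{other} eigenvalue of $(f'(x_*)^T)^{-1}$ is $\det((f'(x_*)^T)^{-1})/\mu = (\det f'(x_*)\,\mu)^{-1}$, which must then be non-real (else $D\beta$ would have a second real eigenvalue via the base block) — but its reciprocal is an eigenvalue of $f'(x_*)$, consistent with $\lambda_{2,3}$ complex only if... this forces $\det f'(x_*)\mu$ to have modulus issues I would sort out. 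The substantive identity to nail is $\mu^{-1} = \det f'(x_*)$, i.e. $\mu^{-2}/\det f'(x_*) = \mu^{-1}$, i.e. $\partial_n g = \lambda_1$; combined with $\det(D\beta|_V)=\lambda_2\lambda_3=\det f'(x_*)=\mu^{-1}=\lambda_1$, the claim follows.

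\textbf{Main obstacle.} The principal difficulty is the careful bookkeeping of the fixed-point constraint on $n_*$ and its interaction with the eigenvalue structure of $f'(x_*)$ versus $D\beta(x_*,n_*)$ — in particular verifying cleanly that the fibre ($S^1$) derivative equals $\|(f'(x_*)^T)^{-1}n_*\|^{-1}$ (and not merely a reciprocal-modulus relative) and that this common value is $\det f'(x_*) = \lambda_2\lambda_3$. Everything else (block-triangularity of $Dh_f$, unipotence of $Dg_\varepsilon$, uniqueness of $V$ from distinct eigenvalues, $\dim\pi(V)=1$ from a dimension count) is routine once the explicit derivative is written down. I would expect to handle the computation by choosing coordinates so that $f'(x_*)$ is in real Jordan/rotation form, which makes the normalised-pullback derivative on $S^1$ transparent.
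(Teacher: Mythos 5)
There is a genuine gap, and it sits at the heart of your plan: the skew--product structure of $h_f$ does \emph{not} transfer spectrally to $\beta=g_\varepsilon\circ h_f$. You assert that the spectrum of $D\beta(x_*,n_*)$ is $\mathrm{spec}(f'(x_*))\cup\{\partial_n g\}$, but the shear $Dg_\varepsilon$ genuinely couples base and fibre, and the eigenvalues of a product are not the union of the factors' block spectra. You can see the contradiction internally: the fixed-point condition $(f'(x_*)^T)^{-1}n_*=\mu n_*$ forces $n_*$ to be a real eigenvector of $f'(x_*)^T$ with eigenvalue $\mu^{-1}=\lambda_1$, and then $u_{n_*}$ is automatically a real eigenvector of $f'(x_*)$ (since $\langle f'(x_*)u_{n_*},n_*\rangle=\langle u_{n_*},f'(x_*)^Tn_*\rangle=0$), so $f'(x_*)$ \emph{always} has two real eigenvalues $\lambda_1$ and $\lambda$ at a fixed point of $\beta$ — yet under the standing hypothesis $D\beta$ has a complex pair. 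Your later ``reorganisation'' goes the wrong way: $\lambda_1$ \emph{is} an eigenvalue of $f'(x_*)$, and it is \emph{not} the fibre eigenvalue. By your own formula the fibre derivative is $\mu^{-2}/\det f'(x_*)=\lambda_1/\lambda$, which equals $\lambda_1$ only if $\lambda=1$.

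Consequently the second half collapses as well: the complex pair $\lambda_2,\lambda_3$ is not $\mathrm{spec}(f'(x_*))$, and $\det(D\beta|_V)$ is not $\det f'(x_*)=\lambda_1\lambda$; the target identity $\mu^{-1}=\det f'(x_*)$ is false in general. What actually happens (and what the paper computes) is that $V$ is spanned by $(u_{n_*},0)$ and the fibre direction $(0,1)$; this plane is $D\beta$-invariant, the shear contributes off-diagonal terms to $D\beta|_V$ whose contributions to the determinant cancel, and $\det(D\beta|_V)=\lambda\cdot(\lambda_1\lambda^{-1})=\lambda_1$ — the product of the base eigenvalue on $u_{n_*}$ and the fibre eigenvalue. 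The eigenvalue $\lambda_1$ of $D\beta$ arises from the quotient action transverse to $V$, which is governed by the $\lambda_1$-eigendirection of $f'(x_*)$ in the base. So the correct proof does require writing out $D\beta(x_*,n_*)$ explicitly (as a block matrix plus a rank-one term built from $u_{n_*}$) rather than reading the spectrum off the uncoupled factors; your dimension count for $\dim\pi(V)=1$ then becomes immediate since $\pi(V)=\mathrm{span}(u_{n_*})$.
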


\begin{proof}
The demonstration consists of  a direct computation of the eigenvalues and eigenvectors at fixed points of the boundary map. For convenience we consider the embedding $\mathbb{R}^2\times S^1:=\{(x,n)\in\mathbb{R}^2\times\mathbb{R}^2~|~\|n\|=1\}\subset \mathbb{R}^4$. We denote the corresponding representation of  the 
boundary map in $\mathbb{R}^4$ in this embedded setting 
$$\hat{\beta}(x,n) := (\hat{\beta}_1(x,n),\hat{\beta}_2(x,n)),$$ where 
\begin{equation*}
    \hat{\beta}_1(x,n) := f(x)+\varepsilon \hat{\beta}_2(x,n), \quad \hat{\beta}_2(x,n) := 
{\|\left(f'(x)^T\right)^{-1}n\|}^{-1}
\left(f'(x)^T\right)^{-1}n.
\end{equation*}
By direct calculation, we find that
\begin{align*}
    D\hat{\beta}(x,n)=\begin{pmatrix}
        f'(x) + \varepsilon D_x(\hat{\beta}_2(x,n)) & \varepsilon D_n(\hat{\beta}_2(x,n))\\
        D_x(\hat{\beta}_2(x,n)) & D_n(\hat{\beta}_2(x,n)),
    \end{pmatrix}
\end{align*}
where $f'$ denotes the derivative of $f$, and
\begin{align*}
    D_x(\hat{\beta}_2(x,n)) &= \frac{I-\hat{\beta}_2(x,n)\hat{\beta}_2(x,n)^T}{\|\left(f'(x)^T\right)^{-1} n\|}D_x\left(\left(f'(x)^T\right)^{-1}n\right), \\
    D_n(\hat{\beta}_2(x,n)) &= \frac{I-\hat{\beta}_2(x,n)\hat{\beta}_2(x,n)^T}{\|\left(f'(x)^T\right)^{-1} n\|}\left(f'(x)^T\right)^{-1}. 
\end{align*}
where $I$ denotes the identity map. 

We choose $u_n \in \mathbb{R}^2$ to be a unit vector orthogonal to $n$, so that $u_n^Tn = 0$, and we choose a unit vector $u_{\hat{\beta}_2}$ so that $u_{\hat{\beta}_2}^T\hat{\beta}_2 = 0$, suppressing the dependence of $\hat{\beta}_2$ on $(x,n)$. Then, using the fact that
    $\hat{\beta}_2\hat{\beta}_2^T + u_{\hat{\beta}_2}u^T_{\hat{\beta}_2} = I$,
we obtain
\begin{align*}
    D\hat{\beta}(x,n) &= \begin{pmatrix}
        f'(x) + \varepsilon \frac{u_{\hat{\beta}_2}u^T_{\hat{\beta}_2}D_x\left(\left(f'(x)^T\right)^{-1}n\right)}{\|\left(f'(x)^T\right)^{-1}n\|} & \varepsilon \frac{u_{\hat{\beta}_2}u^T_{\hat{\beta}_2}\left(f'(x)^T\right)^{-1}}{\|\left(f'(x)^T\right)^{-1}n\|}\\
        \frac{u_{\hat{\beta}_2}u^T_{\hat{\beta}_2}D_x\left(\left(f'(x)^T\right)^{-1}n\right)}{\|\left(f'(x)^T\right)^{-1}n\|}& \frac{u_{\hat{\beta}_2}u^T_{\hat{\beta}_2}\left(f'(x)^T\right)^{-1}}{\|\left(f'(x)^T\right)^{-1}n\|}
    \end{pmatrix}\\
    &= \begin{pmatrix}f'(x)&0_{2\times 2}\\0_{2\times 2}&0_{2\times 2}
    \end{pmatrix} + \frac{1}{\|\left(f'(x)^T\right)^{-1}n\|} \times
    \\&~~~~~\begin{pmatrix}\varepsilon u_{\hat{\beta}_2}\\u_{\hat{\beta}_2}\end{pmatrix} \begin{pmatrix}u_{\hat{\beta}_2}^T D_x\left(\left(f'(x)^T\right)^{-1}n\right)&u_{\hat{\beta}_2}^T\left(f'(x)^T\right)^{-1}\end{pmatrix}, 
\end{align*}
with $0_{n\times m}$ denoting the $n\times m$ zero matrix.

It is readily verified that the relationship between $D\beta(x,n)$ and its embedded version $D\hat{\beta}(x,n)$ is given by
\begin{equation}\label{eq:Jacobian_general}
    D\beta(x,n) = \begin{pmatrix}I&0_{2\times 2}\\0_{1\times 2}&u_{\hat{\beta}_2}^T\end{pmatrix}D\hat{\beta}(x,n)\begin{pmatrix}I&0_{2\times 1}\\0_{2\times 2}&u_n\end{pmatrix},
\end{equation}
where $I$ denotes the two-dimensional identity.

Now, let  $(x_*,n_*)$ be a fixed point of $\beta$, then 
\begin{equation*}
    n_* = \beta_2(x_*,n_*) = \frac{(f'(x_*)^T)^{-1}n_*}{\|(f'(x_*)^T)^{-1}n_*\|} \iff (f'(x_*)^T)^{-1}n_* = \|(f'(x_*)^T)^{-1}n_*\|n_*.
\end{equation*}
Hence, $\lambda_1 :=\|(f'(x_*)^T)^{-1}n_*\|^{-1}$ is an eigenvalue of  $f'(x_*)$ (and $f'(x_*)^T$).

We assert that $u_{n_*}$ is an eigenvector of $f(x_*)$. Indeed, from $$\left\langle\lambda_1^{-1}n_*,f'(x_*)u_{n_*}\right\rangle = \left\langle\left(f'(x_*)^T\right)^{-1}n_*,f'(x_*)u_{n_*}\right\rangle = \langle{n_*},u_{n_*}\rangle = 0, $$ the vector $f'(x_*)u_{n_*}$ is orthogonal to $n_*$
and hence
parallel to $u_{n_*}$. 
Let $\lambda$ denote the eigenvalue of $f'(x_*)$ associated to $u_{n_*}$.
Then, $$u^T_{n_*}\left(f'(x_*)^T\right)^{-1}u_{n_*} = \left(f'(x_*)^{-1}u_{n_*}\right)^Tu_{u_*} = \lambda_1^{-1}u_{n_*}^Tu_{n_*} = \lambda^{-1}.$$

To aid notation, let $w^T := u^T_{n_*}D_{x_*}\left(\left(f'(x_*)^T\right)^{-1}n_*\right),$ then the derivative (\ref{eq:Jacobian_general}) at the fixed point $(x_*,n_*)$ can be written as
\begin{align}\label{eq:Jacobian1}
    D\beta(x_*,n_*)
    &= \begin{pmatrix}
        f'(x_*)&0_{2\times 1}\\
        0_{1\times 2} & 0_{1 \times 1}
    \end{pmatrix} + \lambda_1\begin{pmatrix}
        \varepsilon u_{n_*}\\
        1
    \end{pmatrix}\begin{pmatrix}
        w^T&\lambda^{-1}
    \end{pmatrix}.
\end{align}

It is readily checked from (\ref{eq:Jacobian1}), that the two-dimensional subspace $$V := \left\{\begin{pmatrix}
    au_{n_*}\\
    b
\end{pmatrix},~a,b\in\mathbb{R}\right\},$$ is $D\beta(x_*,n_*)$-invariant. Moreover, since $\pi_1(V)= \{au_{n_*},~a\in\mathbb{R}\}$, we find that $\dim(\pi_1(V))=1$.

Due to the assumption of a complex pair of eigenvalues, there is only one two-dimensional invariant subspace, thus $V$ is precisely the subspace associated with the complex eigenvalues.

Finally, the derivative (\ref{eq:Jacobian1}) restricted to $V$ is given by
\begin{align*}
    D\beta(x_*,n_*)|_V &= \begin{pmatrix}
        u_{n_*}^T&0\\
        0&1
    \end{pmatrix}D\beta(x_*,n_*)\begin{pmatrix}
        u_{n_*}&0\\
        0&1
    \end{pmatrix}\\
    &= \begin{pmatrix}
        u_{n_*}^Tf'(x_*)u_{n_*}&0\\
        0&0
    \end{pmatrix} + \lambda_1^{-1}\begin{pmatrix}
        \varepsilon\\
        1
    \end{pmatrix}\begin{pmatrix}
        w^Tu_{n_*}&\lambda^{-1}
    \end{pmatrix}\\
    &=\begin{pmatrix}
        \lambda + \lambda_1^{-1}\varepsilon w^Tu_{n_*}&\varepsilon \lambda_1^{-1}\lambda^{-1}\\
        \lambda_1^{-1}w^Tu_{n_*}&\lambda_1^{-1}\lambda^{-1}
    \end{pmatrix},
\end{align*}
from which it follows that
\begin{displaymath}
    \det(DB(x_*,n_*)|_V) = \lambda_1 + \varepsilon\lambda^{-1}\lambda_1 w^Tu_{n_*} - \varepsilon\lambda^{-1}\lambda_1w^Tu_{n_*} = \lambda_1.
\end{displaymath}

\end{proof}

We note the properties highlighted in Proposition~\ref{prop:eigenrelation} also hold for periodic points, see \cite[Chapter~5]{tey2022minimal}.

\end{document}